\documentclass[11pt,reqno]{amsart}

\usepackage[T1]{fontenc}
\usepackage{lmodern}
\usepackage{microtype}
\usepackage{mathtools}
\usepackage{amssymb}
\usepackage{enumitem}
\usepackage[hidelinks]{hyperref}
\usepackage[nameinlink,noabbrev]{cleveref}

\numberwithin{equation}{section}

\newtheorem{theorem}{Theorem}[section]
\newtheorem{proposition}[theorem]{Proposition}
\newtheorem{lemma}[theorem]{Lemma}
\newtheorem{corollary}[theorem]{Corollary}
\newtheorem{conjecture}[theorem]{Conjecture}
\theoremstyle{remark}
\newtheorem{remark}[theorem]{Remark}

\newcommand{\N}{\mathbb N}
\newcommand{\R}{\mathbb R}
\newcommand{\e}{\mathrm e}
\newcommand{\ii}{\mathrm i}
\newcommand{\G}{\Gamma}

\title[The KKT conjecture and Laguerre dispersion]
{The Koornwinder--Kostenko--Teschl Conjecture for Jacobi Polynomials\\
and the Discrete Laguerre Phase Transition}

\author{Yu-Tian Li}
\address{School of Mathematics and Statistics, Nanfang College, Guangzhou, 510970, Guangdong,
China}
\email{yutianlee@gmail.com}
\date{}

\hypersetup{
  pdftitle={The Koornwinder-Kostenko-Teschl Conjecture for Jacobi Polynomials and the Discrete Laguerre Phase Transition},
  pdfauthor={Yu-Tian Li},
  pdfkeywords={Jacobi polynomials, Bernstein-type inequalities, Koornwinder-Kostenko-Teschl conjecture, canonical products, discrete Laguerre operator, dispersion estimates, phase transition}
}

\subjclass[2020]{Primary 33C45; Secondary 26D05, 34B24, 47B36, 35Q41}
\keywords{Jacobi polynomials, Bernstein-type inequalities,
Koornwinder--Kostenko--Teschl conjecture, canonical products,
discrete Laguerre operator, dispersion estimates, phase transition}

\begin{document}

\begin{abstract}
We prove the refined Koornwinder--Kostenko--Teschl conjecture.  For the
normalized weighted Jacobi function and all
\(n\in\mathbb N_0\), \(\alpha,\beta\ge0\), and \(-1\le x\le1\),
\[
|g_n^{(\alpha,\beta)}(x)|
\le
\left[
\frac{(n+1)(n+\alpha+\beta+1)}
{(n+\alpha+1)(n+\beta+1)}
\right]^{1/4}\le1.
\]
The proof combines a central contour estimate with Sturm--Sonin
localization and an exact inverse moment.  It also yields an
\(n\)-uniform extreme-lobe theorem and a sharp canonical-product
first-lobe principle.

Applied to the discrete Laguerre evolution, the estimate gives the
optimal positive-parameter decay.  For \(-1<\alpha\le0\), a
complementary one-sided Jacobi inequality gives the exact norm
\(\|\e^{-\ii tH_\alpha}\|_{\ell^1\to\ell^\infty}
=(1+t^2)^{-(1+\alpha)/2}\).
Thus the large-time decay exponent is \(\min\{1,1+\alpha\}\) for every
\(\alpha>-1\).  Bessel, Laguerre, and Darboux scaling limits show that
the temporal and fixed-diagonal exponents are optimal.
\end{abstract}

\maketitle

\section{Introduction}

Bernstein's inequality for Legendre polynomials initiated the study of
weighted uniform bounds for classical orthogonal polynomials.  In the
Jacobi case, the central questions are uniformity in the degree and
parameters and control at the hard edges; see
\cite{AntonovKholshevnikov1981,Bernstein1931,ChowGatteschiWong1994,
ErdelyiMagnusNevai1994,Gautschi2010,HaagerupSchlichtkrull2014,
Krasikov2007,Krasikov2008,Lorch1984}.  These estimates also enter
harmonic analysis and spectral theory, including multiplier theorems
on the Grushin sphere
\cite{CasarinoCiattiMartini2019,CasarinoCiattiMartini2021} and in
weighted extremal problems \cite{ChristiansenRubin2025}.  Here the
same inequality also determines the dispersive decay of the discrete
Laguerre evolution.

Let \(P_n^{(\alpha,\beta)}\) be the Jacobi polynomial in the standard
normalization.  For \(n\in\N_0\), \(\alpha,\beta\ge0\), and
\(x\in[-1,1]\), define
\begin{equation}\label{eq:def-g}
\begin{aligned}
g_n^{(\alpha,\beta)}(x)={}&
\left[
\frac{\G(n+1)\G(n+\alpha+\beta+1)}
{\G(n+\alpha+1)\G(n+\beta+1)}
\right]^{1/2}\\[-2pt]
&\times
\left(\frac{1-x}{2}\right)^{\alpha/2}
\left(\frac{1+x}{2}\right)^{\beta/2}
P_n^{(\alpha,\beta)}(x).
\end{aligned}
\end{equation}
At \(x=\pm1\), powers of the form \(0^0\) in \eqref{eq:def-g} are
understood by continuous extension in \(x\) with the parameters fixed.
We shall also use the same formula for \(\alpha>-1\), \(\beta\ge0\),
and \(-1\le x<1\), where it is finite; \Cref{thm:main} itself concerns
the nonnegative parameter quadrant.

When \(\alpha,\beta\in\N_0\), the functions \eqref{eq:def-g} occur as
matrix coefficients of irreducible unitary representations of
\(\mathrm{SU}(2)\); see
\cite{HaagerupSchlichtkrull2014,KKT2018}.  Unitarity gives
\begin{equation}\label{eq:basic-intro}
\lvert g_n^{(\alpha,\beta)}(x)\rvert\le1
\end{equation}
on the integer-parameter lattice.  In the same regime, the contour
analysis of Haagerup and Schlichtkrull yields the refinement
\begin{equation}\label{eq:target}
\lvert g_n^{(\alpha,\beta)}(x)\rvert
\le T_{n,\alpha,\beta}:=
\left[
\frac{(n+1)(n+\alpha+\beta+1)}
{(n+\alpha+1)(n+\beta+1)}
\right]^{1/4}.
\end{equation}
See \cite[Equation~(20)]{HaagerupSchlichtkrull2014} and
\cite[Lemma~4.3]{KKT2018}.

Koornwinder, Kostenko, and Teschl conjectured that
\eqref{eq:basic-intro} and its refinement \eqref{eq:target} hold for
all real \(\alpha,\beta\ge0\)
\cite[Conjecture~6.1]{KKT2018}.  We prove the refined statement.

\begin{theorem}\label{thm:main}
For every \(n\in\N_0\), \(\alpha,\beta\ge0\), and \(x\in[-1,1]\),
inequality \eqref{eq:target} holds.
\end{theorem}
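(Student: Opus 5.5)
The plan follows the architecture announced in the abstract: a contour estimate on the central region, Sturm--Sonin localization to push the maximum of \(|g_n^{(\alpha,\beta)}|\) into that region or onto the extreme lobes, and an exact inverse moment to handle the remaining boundary lobes. First, put \(x=\cos\theta\) and pass to the Liouville normal form \(u(\theta)=(\sin\tfrac\theta2)^{\alpha+1/2}(\cos\tfrac\theta2)^{\beta+1/2}P_n^{(\alpha,\beta)}(\cos\theta)\). It satisfies \(u''+Q(\theta)u=0\) with
\[
Q(\theta)=\Bigl(n+\tfrac{\alpha+\beta+1}{2}\Bigr)^2+\frac{1/4-\alpha^2}{4\sin^2(\theta/2)}+\frac{1/4-\beta^2}{4\cos^2(\theta/2)}.
\]
Equivalently, \(g=g_n^{(\alpha,\beta)}\) itself satisfies a second-order equation. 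The Sonin function \(S=g^2+p\,(g')^2/q\), with \(p,q\) read off from that equation, is monotone on each side of the turning point where \(Q\) is maximal, or where the relevant coefficient changes monotonicity. So the successive local maxima of \(|g|\) are monotone on each side. Hence \(\sup|g|\) is attained either at an extreme lobe (the first lobe from \(x=1\) or from \(x=-1\)) or is controlled by the value of \(S\) at the central point.

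Second, treat the central region with a contour-integral representation valid for real \(\alpha,\beta\ge0\). I would replace the Haagerup--Schlichtkrull integer-parameter formula with a Cauchy/Schläfli integral for \(P_n^{(\alpha,\beta)}\),
\[
\bigl(\tfrac{1-x}{2}\bigr)^{\alpha}\bigl(\tfrac{1+x}{2}\bigr)^{\beta}P_n^{(\alpha,\beta)}(x)=\frac{1}{2\pi \ii}\oint \frac{(1-z)^{n+\alpha}(1+z)^{n+\beta}}{2^n (z-x)^{n+1}}\,dz,
\]
taken along a circle through the saddle, or a Laplace-type integral. Taking absolute values and applying Cauchy--Schwarz then bounds \(S\) at the central point by \(T_{n,\alpha,\beta}^2\) times a beta-integral ratio. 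The gamma factors in the normalization should produce exactly \(T^2\) once we use the log-convexity inequality
\[
\G(n+1)\G(n+\alpha+\beta+1)\le \G(n+\alpha+1)\G(n+\beta+1)\cdot\bigl[\cdots\bigr],
\]
which in turn reduces to bounds on \(\G(s+\tfrac12)/\G(s)\) of Kershaw/Gautschi type.

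Third, the extreme lobes. On the first lobe near \(x=1\), write \(g\) as \(\bigl(\tfrac{1-x}2\bigr)^{\alpha/2}\) times the canonical product \(P_n(x)/P_n(1)=\prod_k (x-x_k)/(1-x_k)\). Each factor, combined with the weight, is maximized where its logarithmic derivative vanishes. This gives a first-lobe bound of the form \(\sup \le \binom{n+\alpha}{n}\bigl[\tfrac{\G(n+1)\G(n+\alpha+\beta+1)}{\G(n+\alpha+1)\G(n+\beta+1)}\bigr]^{1/2}\max\bigl(\cdots\bigr)\), in which the maximizer is located through the inverse moment \(\sum_k 1/(1-x_k)=\frac{n(n+\alpha+\beta+1)}{2(\alpha+1)}\). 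This identity is exact and follows from \(P_n'(1)/P_n(1)\). Using \(\log(1-t)\le -t\) together with this moment, the product becomes an explicit function of \(n,\alpha,\beta\), and we must show it is at most \(T\). That is a one-variable (in \(\alpha\), at fixed \(n,\beta\)) monotonicity check.

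The main obstacle will be the central contour estimate for non-integer parameters, since unitarity and the \(\mathrm{SU}(2)\) structure are no longer available. I would first attempt a positive Laplace-type integral (Dirichlet--Mehler / Koornwinder product formula) and then apply Cauchy--Schwarz against the normalized beta density. This is the route whose equality cases reproduce the fourth-root in \(T_{n,\alpha,\beta}\). The extreme-lobe inequality is likely delicate only when \(\alpha\) is small and \(\beta\) is large, where the two bounds nearly meet.
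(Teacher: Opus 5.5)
\textbf{There is a genuine gap, in your third step.} Your first step is sound: it is the paper's two-sided lobe monotonicity, which shows the supremum is an extreme-lobe peak. The proof breaks in the third step, and that is the step that has to handle the endpoint caps.

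You bound the zero product from above using $\log(1-t)\le -t$ together with $\sum_k(1-x_k)^{-1}=n(n+\alpha+\beta+1)/(2(\alpha+1))$. This is far too lossy. Take $\beta=0$, so that $T_{n,\alpha,0}=1$, and pass to the Bessel scale $x=\cos(z/n)$, $n\to\infty$. Your majorant converges to
\[
M(z)=\frac{(z/2)^{\alpha}}{\Gamma(\alpha+1)}\,\e^{-z^2/(4(\alpha+1))},
\]
which is the same trick applied to the canonical product of $J_\alpha$. Already at $z=\alpha<j_{\alpha,1}$, inside the first lobe, Stirling gives $M(\alpha)>(\e^{3/4}/2)^{\alpha}/(\sqrt{2\pi\alpha}\,\e^{1/(12\alpha)})$, which grows exponentially in $\alpha$. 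For $\alpha=20$ the supremum of $M$ over the first lobe, attained at $j_{20,1}\approx25.42$, is about $2.3$. The true peak is $O(\alpha^{-1/3})$. Since $\sigma=\alpha/(\alpha+2n)\to0$, this lobe lies deep in the cap, so the central estimate cannot rescue it. Your ``one-variable monotonicity check'' is therefore false, not merely delicate.

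Relatedly, the Sonin function is \emph{minimal} at the turning point, so its value there cannot control any peak. The real dichotomy is whether an extreme-lobe peak lies in $|x|\le\sigma$ or in a cap.

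\textbf{What is missing: how the paper bounds the first peak and couples it to the contour region.} Work in the cap coordinate $u=B(1-x)/2$, $B=n+\alpha+\beta+1$, with $H(u)=g_n^{(\alpha,\beta)}(x)$. The paper bounds the first lobe from \emph{below}, not above:
\begin{itemize}
\item Log-concavity on the first lobe and the critical-point identity give the tangent envelope $H(tu_1)/A\ge t^{\alpha/2}\bigl(1+\tfrac\alpha2(1-t)\bigr)$ for $0\le t\le1+2/\alpha$.
\item Integrating the square of this envelope against $du/u$, and comparing with the exact $L^2$ inverse moment $\int_0^B H(u)^2\,du/u=1/\alpha$, yields $A^2<1/R_*(\alpha)$. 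That moment comes from Rodrigues' formula; it is not your zero sum.
\end{itemize}

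This bound is uniform in $n$, so by itself it cannot be $\le T$. For example, $T\to0$ as $\alpha=\beta\to\infty$ with $n$ fixed, while $R_*(\alpha)^{-1/2}\to\sqrt2/\e$. The paper therefore uses it only when the Sturm product $pQ$ is positive at the cap boundary $x=\sigma$:
\begin{itemize}
\item In that case $\alpha<3(n+1)$ is forced. Hence, with $m=n+1$, $z=\alpha\beta/(m(m+\alpha+\beta))<\min\{\alpha/2,3\}$, and an explicit rational inequality gives $R(\alpha)^{-2}<(1+z)^{-1}=T^4$.
\item Otherwise, Sonin monotonicity on the cap, where $(pQ)'>0$, shows $H$ increases up to $x=\sigma$. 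The cap is then controlled by the central bound at its boundary.
\end{itemize}

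The central bound itself needs no Laplace-type integral or Cauchy--Schwarz. It is the maximum-modulus estimate for your Cauchy integral on the circle $C(x,\rho)$, $\rho^2=(1-x^2)/(\alpha/n+\beta/n+1)$, combined with Haagerup--Schlichtkrull's gamma inequality. However, that circle avoids the branch cuts only for $|x|\le\sigma$. That restriction is exactly why the caps need the argument your plan lacks.
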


Since
\[
(n+\alpha+1)(n+\beta+1)
-(n+1)(n+\alpha+\beta+1)=\alpha\beta\ge0,
\]
\Cref{thm:main} implies \eqref{eq:basic-intro}.  The constant one is
best possible: equality occurs at \(x=1\) when \(\alpha=0\), and at
\(x=-1\) when \(\beta=0\).  We do not claim that
\(T_{n,\alpha,\beta}\) is optimal for every fixed positive parameter
triple.

The representation-theoretic proof is confined to integer parameters;
analytic continuation does not preserve a modulus inequality.
For arbitrary real parameters, Haagerup and Schlichtkrull proved
\begin{equation}\label{eq:HS-intro}
(1-x^2)^{1/4}\lvert g_n^{(\alpha,\beta)}(x)\rvert
\le \frac{C}{(2n+\alpha+\beta+1)^{1/4}},
\qquad C<12.
\end{equation}
This bound degenerates at the endpoints.  Their optimized contour
argument gives \eqref{eq:target} on the central interval
\[
\lvert x\rvert\le
\frac{\alpha+\beta}{\alpha+\beta+2n},
\]
but leaves the two endpoint caps uncontrolled; see
\cite[Section~5, Case~2]{HaagerupSchlichtkrull2014}.

Koornwinder, Kostenko, and Teschl discovered that this parameter gap is
also a dispersive one.  They expressed the kernel of the discrete
Laguerre evolution in terms of \eqref{eq:def-g}:
\begin{equation}\label{eq:kernel-intro}
\left|\e^{-\ii tH_\alpha}(n,m)\right|
=\frac1{\sqrt{1+t^2}}
\left|
g_n^{(\alpha,m-n)}
\left(\frac{t^2-1}{t^2+1}\right)
\right|,
\qquad m\ge n.
\end{equation}
Consequently, the integer-parameter estimate gives
\[
\left\|\e^{-\ii tH_\alpha}\right\|_{\ell^1\to\ell^\infty}
\le (1+t^2)^{-1/2}
\]
for \(\alpha\in\N_0\); the exact \(\alpha=0\) identity had been proved
earlier in \cite{KostenkoTeschl2016}.  For arbitrary real
\(\alpha\ge0\), the Haagerup--Schlichtkrull estimate instead yielded
only
\[
\left|\e^{-\ii tH_\alpha}(n,m)\right|
\le \frac{C|t|^{-1/2}}{(n+m+\alpha+1)^{1/4}},
\qquad t\ne0;
\]
see \cite[Theorem~6.3]{KKT2018}.  This is the operator-theoretic
meaning of the passage from integer to real Jacobi parameters.

For \(a>0\), put
\[
R_*(a)=\frac{a+2}{2(a+1)}
\left(\frac{a+2}{a}\right)^a,
\qquad R_*(0):=1.
\]
The next result complements the parameter-dependent factor in
\Cref{thm:main} by an \(n\)-uniform estimate and identifies where the
maximum occurs.

\begin{theorem}[Extreme-lobe dominance]
\label{thm:intro-extreme-lobes}
Let \(n\ge1\) and \(\alpha,\beta>0\).  If \(\mu_+\) and \(\mu_-\)
are the maxima of \(\lvert g_n^{(\alpha,\beta)}\rvert\) on the nodal
intervals adjacent to \(x=1\) and \(x=-1\), respectively, then
\begin{equation}\label{eq:intro-extreme-lobes}
\max_{-1\le x\le1}|g_n^{(\alpha,\beta)}(x)|
=\max\{\mu_+,\mu_-\}
<\max\{R_*(\alpha)^{-1/2},R_*(\beta)^{-1/2}\}<1.
\end{equation}
If one or both parameters vanish, only the corresponding numerical
bound is asserted: with \(R_*(0)=1\),
\[
\max_{-1\le x\le1}|g_n^{(\alpha,\beta)}(x)|
\le\max\{R_*(\alpha)^{-1/2},R_*(\beta)^{-1/2}\},
\qquad \alpha,\beta\ge0.
\]
Consequently,
\begin{equation}\label{eq:intro-combined-bound}
\max_{-1\le x\le1}|g_n^{(\alpha,\beta)}(x)|
\le \min\!\left\{
T_{n,\alpha,\beta},
\max\{R_*(\alpha)^{-1/2},R_*(\beta)^{-1/2}\}
\right\}.
\end{equation}
\end{theorem}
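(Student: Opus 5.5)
The plan has two parts: a Sturm--Sonin monotonicity statement that pushes the maximum to the two extreme lobes, and an explicit bound on each extreme lobe that is uniform in \(n\).

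\textbf{Step 1 (Sturm--Sonin localization).} Write \(u(\theta)\) for the Liouville normal form of \(g_n^{(\alpha,\beta)}\) under \(x=\cos\theta\), weighted by \((\sin\theta)^{1/2}\). It satisfies
\[
u''+Q(\theta)u=0,\qquad
Q(\theta)=\Bigl(n+\tfrac{\alpha+\beta+1}2\Bigr)^2+\frac{1/4-\alpha^2}{4\sin^2(\theta/2)}+\frac{1/4-\beta^2}{4\cos^2(\theta/2)}.
\]
For \(g\) itself I would use the Sonin function
\[
S(x)=g(x)^2+\frac{p(x)}{\lambda(x)}\,g'(x)^2 .
\]
Here \(p\) and \(\lambda\) are read off from the self-adjoint form of the second-order ODE satisfied by \(g_n^{(\alpha,\beta)}\), namely \((1-x^2)g''-2xg'+(\lambda_n-\frac{\alpha^2}{2(1-x)}-\frac{\beta^2}{2(1+x)})g=0\), up to the exact constants. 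Then \(S(x)=g(x)^2\) at every critical point of \(g\), and \(S'\) has the sign of \(-(p\lambda)'\). Since \(\alpha,\beta>0\), the effective potential \(\lambda\) vanishes at a turning point near each end and \(p\lambda\) is unimodal. So \(S\) decreases away from each endpoint on the oscillatory region. This means the successive local maxima of \(|g|\) decrease as we move from the two ends toward the interior turning zone.

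Interior maxima are therefore dominated by the extreme lobe on the corresponding side, and the region beyond the last zero contains exactly one critical point. This gives \(\max|g|=\max\{\mu_+,\mu_-\}\). The care needed is at the turning points where \(\lambda\) changes sign. There I would argue directly that no zero of \(g\) lies in the nonoscillatory caps, so each cap is contained in an extreme lobe.

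\textbf{Step 2 (bound on the extreme lobe).} By the symmetry \(x\mapsto -x\), \(\alpha\leftrightarrow\beta\), it suffices to treat \(\mu_+\). The strategy is to compare with the limiting profile. On the first lobe near \(x=1\), write \(g\) as a canonical product over its zeros:
\[
g(x)=c\,\Bigl(\frac{1-x}{2}\Bigr)^{\alpha/2}\prod_{k}\Bigl(1-\frac{1-x}{1-x_k}\Bigr)\cdot(\text{factor from }1+x).
\]
Using \(\log(1-s)\le -s\), I would bound the product by an exponential \(\exp(-(1-x)\sum_k (1-x_k)^{-1})\). The zero sum is the exact inverse moment. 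It equals \(P_n'(1)/P_n(1)\), which is explicit: \(n(n+\alpha+\beta+1)/(2(\alpha+1))\).

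Maximizing \(s^{\alpha/2}e^{-Ks}\) times the normalizing constant, and using Gamma-ratio monotonicity to control the prefactor, should produce exactly \(R_*(\alpha)^{-1/2}\). The shape of \(R_*\) supports this: \(\bigl(\frac{a+2}{a}\bigr)^a\) is the kind of factor that comes from optimizing \(s^a e^{-\cdots}\) against a quadratic term. The strict inequality then comes from the strict inequality \(\log(1-s)<-s\) for \(s>0\).

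I expect the main obstacle to be making this first-lobe bound genuinely \(n\)-uniform. The crude exponential bound loses too much unless the second-order term in \(\log(1-s)\) and the \((1+x)^{\beta/2}\) factor are both retained. I would try the sharper inequality \(\log(1-s)\le -s-s^2/2\), together with the second inverse moment \(\sum_k(1-x_k)^{-2}\). That moment is also explicit, from \(P''(1)\), and should yield the \((a+2)/(2(a+1))\) factor.

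When \(\alpha=0\) or \(\beta=0\) the same argument goes through in the limit, and only the non-strict bound with \(R_*(0)=1\) is asserted. Finally, \eqref{eq:intro-combined-bound} follows by combining this with \Cref{thm:main}.
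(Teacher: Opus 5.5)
Your Step~1 is the paper's argument (\Cref{prop:two-sided-lobes}) written in the $x$-variable. With the correct $\lambda_n$, your $p\lambda$ is exactly the concave quadratic $F$ of \eqref{eq:weighted-jacobi}, so the Sonin function decreases from each end toward the vertex. The paper places every lobe maximum in $\{p\lambda>0\}$ via log-concavity of the zero factorization, which replaces your cap argument. The zero-parameter limit and the final intersection with \Cref{thm:main} also match.

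\textbf{The gap is Step~2.} Bounding the zero product from above by $\log(1-s)\le -s$ (or $\le -s-s^2/2$), together with the power sums $\sum_k(1-x_k)^{-1}=P_n'(1)/P_n(1)$ and $\sum_k(1-x_k)^{-2}$, is too weak to give $R_*(\alpha)^{-1/2}$.

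\emph{A small case.} Put $s=(1-x)/2$. For $n=1$ and $\alpha=\beta=2$ one has $g=\sqrt{30}\,s(1-s)(1-2s)$. Your bound is $\sqrt{30}\,s(1-s)\e^{-2s}$, even with the $(1+x)^{\beta/2}$ factor kept exactly. Its maximal square is about $0.399$, whereas the theorem requires $\mu_+^2<R_*(2)^{-1}=3/8$.

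\emph{The first-order bound as $n\to\infty$.} In the Mehler--Heine limit your zero sum becomes the Rayleigh sum $\sum_k j_{\alpha,k}^{-2}=1/(4(\alpha+1))$. The first-order bound then tends to $\Gamma(\alpha+1)^{-2}\bigl(\alpha(\alpha+1)/2\bigr)^{\alpha}\e^{-\alpha}$, with the maximizer still inside the first lobe at $\alpha=5$. This is already about $0.355$, above $R_*(5)^{-1}\approx0.319$, at $\alpha=5$. It grows exponentially in $\alpha$ and eventually exceeds $1$.

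\emph{The second-order version.} Using $\sum_k j_{\alpha,k}^{-4}=1/(16(\alpha+1)^2(\alpha+2))$, the limiting bound is about $0.353$, above $R_*(20)^{-1}\approx0.284$, at $\alpha=20$. The maximizer is again inside the first lobe. This version also grows exponentially: for large $\alpha$ the first peak lies within $O(\alpha^{1/3})$ of $j_{\alpha,1}$, and order $\alpha$ zeros are comparable to it. Hence every Taylor truncation of $\log(1-s)$ loses order $\alpha$ in the exponent. Taking $\alpha=\beta$, the theorem's right-hand side is exactly $R_*(\alpha)^{-1/2}$, so these examples rule out the method.

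Your reading of $R_*$ is also off. The factor $((a+2)/a)^a=(1+2/a)^a$ is the endpoint of an integration range, not the result of optimizing $s^a\e^{-\cdots}$.

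\textbf{What is missing} is a global normalization paired with a \emph{lower} bound on the first lobe.
\begin{itemize}
\item The paper's inverse moment is not the zero sum. It is the integral identity $\int_0^B H(u)^2u^{-1}\,du=1/a$ of \Cref{lem:inverse-moment}, obtained from Rodrigues' formula by $n$ integrations by parts.
\item \Cref{thm:canonical-product-lobe} then shows that the first lobe dominates a universal profile: $H(tu_1)\ge A\,t^{a/2}\bigl(1+\tfrac a2(1-t)\bigr)$ for $0\le t\le 1+2/a$. To see this, divide by $A\,t^{a/2}$ and collapse the zero factors into a single one. What remains is convex on each side of the peak, and the critical-point identity $a/2=pq+d+c$ fixes the tangent slope.
\item Integrating this envelope against $du/u$ gives $1/a\ge A^2R_*(a)/a$. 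The inequality is strict because of the mass beyond $(1+2/a)u_1$.
\end{itemize}
A pointwise upper bound on the product never uses a normalization of this kind, which is why your route cannot close.
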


In particular, the unrefined Koornwinder--Kostenko--Teschl (KKT)
inequality for positive degree has a contour-free proof.  The two branches in
\eqref{eq:intro-combined-bound} are complementary: the \(R_*\)-branch
is uniform in the degree, while \(T_{n,\alpha,\beta}\) is stronger in
some large-parameter regimes.

\paragraph{The dispersive phase transition.}
A complementary one-sided Jacobi inequality for \(-1<a\le0\), proved
in \Cref{thm:negative-contractivity}, yields the second main result.

\begin{theorem}[Dispersive phase transition]
\label{thm:intro-phase-transition}
Let \(\alpha>-1\).
\begin{enumerate}[label=\textup{(\roman*)}]
\item If \(-1<\alpha\le0\), then, for every \(t\in\R\),
\begin{equation}\label{eq:intro-negative-norm}
\left\|\e^{-\ii tH_\alpha}\right\|_{\ell^1\to\ell^\infty}
=(1+t^2)^{-(1+\alpha)/2}.
\end{equation}
\item If \(\alpha\ge0\), then
\[
\left\|\e^{-\ii tH_\alpha}\right\|_{\ell^1\to\ell^\infty}
\le (1+t^2)^{-1/2}.
\]
\item In every case,
\begin{equation}\label{eq:intro-phase-law}
\left\|\e^{-\ii tH_\alpha}\right\|_{\ell^1\to\ell^\infty}
\asymp_\alpha |t|^{-\min\{1,1+\alpha\}},
\qquad |t|\to\infty.
\end{equation}
\end{enumerate}
\end{theorem}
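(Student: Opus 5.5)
We would prove \Cref{thm:intro-phase-transition} by reducing everything to the kernel identity \eqref{eq:kernel-intro} and its symmetric counterpart for \(m<n\) (the kernel is symmetric, since \(H_\alpha\) is a real Jacobi matrix). Since \(\|K\|_{\ell^1\to\ell^\infty}=\sup_{n,m}|K(n,m)|\), it suffices to bound
\[
\sup_{0\le n\le m}\frac{1}{\sqrt{1+t^2}}\Bigl|g_n^{(\alpha,m-n)}(x_t)\Bigr|,
\qquad x_t=\frac{t^2-1}{t^2+1}\in[-1,1).
\]

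\emph{Part (ii).} Take \(\alpha\ge0\). Here \(\beta=m-n\ge0\), so \Cref{thm:main}, together with \(T_{n,\alpha,\beta}\le1\), gives \(|g_n^{(\alpha,m-n)}(x)|\le1\) on \([-1,1]\). The bound \((1+t^2)^{-1/2}\) follows at once.

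\emph{Part (i).} Take \(-1<\alpha\le0\). We factor out the endpoint weight using
\[
\frac{1-x_t}{2}=\frac{1}{1+t^2}.
\]
Then
\[
\frac{1}{\sqrt{1+t^2}}\bigl|g_n^{(\alpha,\beta)}(x_t)\bigr|
=(1+t^2)^{-(1+\alpha)/2}\,\bigl|h_n^{(\alpha,\beta)}(x_t)\bigr|,
\qquad
h_n^{(\alpha,\beta)}(x):=\Bigl(\tfrac{1-x}{2}\Bigr)^{-\alpha/2}g_n^{(\alpha,\beta)}(x).
\]
The one-sided inequality in \Cref{thm:negative-contractivity} should give exactly \(|h_n^{(\alpha,\beta)}(x)|\le1\) for \(-1<\alpha\le0\), \(\beta\ge0\) and \(x\in[-1,1)\). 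This yields the upper bound. For the matching lower bound, take \(n=m=0\). Then \(\beta=0\) and \(P_0=1\), so the gamma prefactor equals \(1\) and \(|h_0|=1\). The kernel entry \((0,0)\) therefore equals \((1+t^2)^{-(1+\alpha)/2}\) exactly, and \eqref{eq:intro-negative-norm} follows. For \(t=0\), where \(x_t=-1\), the same formulas apply by continuity, and the identity operator has norm \(1\).

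\emph{Part (iii).} For \(\alpha\le0\) the asymptotic law is immediate from (i). For \(\alpha>0\), (ii) supplies the upper bound \(|t|^{-1}\). For the lower bound we evaluate a single entry, again \(n=m=0\):
\[
|g_0^{(\alpha,0)}(x_t)|=\Bigl(\tfrac{1-x_t}{2}\Bigr)^{\alpha/2}=(1+t^2)^{-\alpha/2}.
\]
This decays too fast, so we must instead pick \(n\) growing with \(t\). The natural choice is \(m=n\), \(\beta=0\), and \(n\asymp t^2\). Then \(x_t\) sits at distance \(\asymp n^{-2}\) from \(1\), which is the Bessel regime. The Mehler--Heine asymptotic
\[
n^{-\alpha}P_n^{(\alpha,0)}\Bigl(\cos\tfrac{z}{n}\Bigr)\to (z/2)^{-\alpha}J_\alpha(z)
\]
gives \(g_n^{(\alpha,0)}(\cos(z/n))\to c\,J_\alpha(z)\) for a nonzero constant \(c\). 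Indeed, the gamma prefactor behaves like \(n^{-\alpha/2}\) and the weight like \((z/2n)^{\alpha}\), and these powers of \(n\) cancel against \(n^{\alpha}\) from the asymptotic.

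The only delicate point is this lower bound for \(\alpha>0\). We fix \(z_0\) with \(J_\alpha(z_0)\ne0\) and choose \(n=n(t)\) so that \(n\,\arccos x_t\to z_0\) as \(t\to\infty\). This is possible because \(\arccos x_t\sim 2/t\), so \(n\approx z_0t/2\) (that is, \(n\asymp t\), not \(t^2\)). The kernel entry is then \(\gtrsim_\alpha |t|^{-1}\). This is precisely the Bessel scaling limit mentioned in the abstract, and it completes \eqref{eq:intro-phase-law}.
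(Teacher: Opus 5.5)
Parts (i) and (ii) follow the paper's argument. Your $h_n^{(\alpha,\beta)}$ is exactly $\mathcal B_{n,\beta}^{(\alpha)}$, so (i) is the kernel factorization plus \Cref{thm:negative-contractivity} and the $(0,0)$ entry, and (ii) is \Cref{thm:main} with $T_{n,\alpha,\beta}\le1$.

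You depart from the paper in the lower bound of (iii) for $\alpha>0$. You use the Bessel hard-edge chart (fixed diagonal, $n\asymp|t|$, Mehler--Heine), which is \Cref{thm:three-scaling-limits}(i). The paper instead obtains \Cref{cor:phase-transition} from \Cref{cor:temporal-sharpness}. That lower bound comes from the Laguerre confluence chart, \Cref{thm:three-scaling-limits}(ii), with fixed row index $k$ and gap $d\approx u(1+t^2)$.

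For fixed $k$ the Jacobi polynomial is a terminating hypergeometric sum. So $g_k^{(\alpha,d)}(1-2u/d)\to\mathcal L_k^{(\alpha)}(u)$ needs only termwise convergence and a gamma-ratio limit, with no Mehler--Heine theorem. In the lowest row it is even exact: $K_\alpha(t;0,k)^2=p_k^{(\alpha)}(t^2)$, and $k=\lfloor\alpha t^2\rfloor$ gives $|t|K_\alpha(t;0,k)\to A_\alpha$ by Stirling alone (\Cref{prop:negative-binomial}).

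The paper's route also gives the larger constant $\Lambda_\alpha\ge\max\{A_\alpha,M_\alpha\}$, while yours gives $|J_\alpha(z_0)|$, at best $M_\alpha$. On the other hand, yours shows that the order $|t|^{-1}$ is already attained near the diagonal, not only at gaps of size $t^2$. Either suffices for $\asymp_\alpha$.

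Your Bessel step contains slips you should fix.

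\begin{itemize}
\item With $\beta=0$ the gamma prefactor in \eqref{eq:def-g} is exactly $1$ (for fixed $\beta=d$ it is $1+O(n^{-1})$), not of order $n^{-\alpha/2}$. With the exponents you state, the powers of $n$ would not cancel but would leave $n^{-\alpha/2}$, and the lower bound would vanish. The correct bookkeeping is $1\cdot(z/2n)^{\alpha}\cdot n^{\alpha}(z/2)^{-\alpha}J_\alpha(z)=J_\alpha(z)$, so $c=1$.
\item The sentence ``$n\asymp t^2$, hence distance $\asymp n^{-2}$'' is inconsistent. With $n\asymp t^2$ one has $1-x_t\asymp n^{-1}$ and $n\theta_t\to\infty$, which is outside the Bessel window. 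Your later choice $n\asymp|t|$ is the right one.
\item Since $z_n=n\theta_t$ only tends to $z_0$, you need the locally uniform form of Mehler--Heine.
\item You should let $|t|\to\infty$ rather than $t\to\infty$; this is harmless because the modulus of the kernel depends on $t$ only through $t^2$.
\end{itemize}
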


Part~\textup{(i)} strengthens the
\(O(|t|^{-(1+\alpha)})\) prediction of
\cite[Remark~6.6]{KKT2018} to an exact identity, valid for every time
and throughout the full range \(-1<\alpha\le0\).

For \(\alpha=0\), the \((0,0)\) kernel entry attains the positive
upper bound.  For \(\alpha>0\), we do not determine the exact kernel
norm; the lower bounds proving \eqref{eq:intro-phase-law} arise from
two distinct hard-edge scalings.

\paragraph{Further consequences.}
The estimate in \Cref{thm:main} remains finite at the hard edges but
does not decay with the degree when the parameters are fixed.  In an
independent companion paper, Bai and Li
\cite[Theorem~1.1]{BaiLiEMNKrasikov2026} prove the degree--parameter
order conjectured by Krasikov in its big-\(O\) form, and hence the
Erd\'elyi--Magnus--Nevai (EMN) conjecture.  After conversion between the two
Jacobi normalizations, their theorem supplies a complementary bulk
branch.  Taking the minimum of the endpoint and bulk estimates gives
a two-scale envelope whose crossover, for fixed parameters, occurs on
the natural hard-edge scale \(1-x^2\asymp n^{-2}\).  On each fixed
diagonal of the discrete Laguerre kernel it improves the spatial order
from \((n+m)^{-1/4}\) to \((n+m)^{-1/2}\); Darboux asymptotics show
that the latter exponent is optimal.

Three scaling limits account for the lower bounds used in
\Cref{thm:intro-phase-transition}: Bessel functions arise on fixed
diagonals when \(n/|t|\) is fixed, Laguerre functions arise when the
index separation is of order \(1+t^2\), and a Darboux profile governs
the large-index limit at fixed nonzero time.  The squared moduli of the
entries in the lowest row form a negative-binomial distribution and
lead to a natural extremal conjecture for normalized Meixner
coefficients.  The dispersive estimates also give the corresponding
\(\ell^p\)-\(\ell^{p'}\), Strichartz, and small-data scattering bounds.

\paragraph{Outline of the proof.}
The Haagerup--Schlichtkrull contour estimate handles the central
interval.  On each endpoint cap a Sturm--Sonin function reduces the
problem to the first-lobe maximum \(A\).  Rodrigues' formula gives the
exact inverse moment
\[
\int_0^B\frac{H(u)^2}{u}\,du=\frac1a,
\]
and the canonical-product principle bounds it below by
\(A^2R_*(a)/a\).  The remaining comparison with
\(T_{n,a,b}\) reduces to an explicit rational inequality.  Reflection,
continuity, and a separate degree-zero argument complete the proof.
The same Sonin function orders all lobe maxima around the vertex of its
quadratic coefficient, proving extreme-lobe dominance.  A reflected
inverse moment and a second Sonin argument treat negative parameters.
Neither contractivity theorem uses asymptotics.

\paragraph{Organization.}
\Cref{sec:central,sec:sonin,sec:overflow,sec:first-lobe,%
sec:completion,sec:degree-zero} prove \Cref{thm:main}.
\Cref{sec:negative-jacobi} establishes the one-sided
negative-parameter theorem.  \Cref{sec:endpoint-bulk,sec:laguerre}
give the two-scale synthesis and the Laguerre--Bessel consequences.
\Cref{sec:discrete-laguerre} proves the kernel norm, scaling, and
sharpness results, and \Cref{sec:nonlinear} records the Strichartz and
small-data scattering applications.

\section{Preliminaries and the central interval}\label{sec:central}

We shall use the reflection identity
\begin{equation}\label{eq:reflection}
g_n^{(\alpha,\beta)}(-x)
=(-1)^n g_n^{(\beta,\alpha)}(x),
\end{equation}
which follows from
\(
P_n^{(\alpha,\beta)}(-x)=(-1)^nP_n^{(\beta,\alpha)}(x).
\)
The Jacobi polynomial has \(n\) simple zeros in \((-1,1)\), and
\[
P_n^{(\alpha,\beta)}(1)
=\frac{\G(n+\alpha+1)}{n!\,\G(\alpha+1)}.
\]
These standard facts and Rodrigues' formula may be found in
\cite{Szego1975}.

The only nonstandard estimate imported into the proof of
\Cref{thm:main} is the following central bound.

\begin{lemma}[Central-contour estimate]\label{lem:central}
Let \(n\ge1\), \(\alpha,\beta\ge0\), and \(\alpha+\beta>0\).  Set
\[
\sigma=\frac{\alpha+\beta}{\alpha+\beta+2n}.
\]
Then \eqref{eq:target} holds whenever \(\lvert x\rvert\le\sigma\).
\end{lemma}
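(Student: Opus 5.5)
The plan is to reproduce the Haagerup--Schlichtkrull contour argument with the optimized radius, as in \cite[Section~5, Case~2]{HaagerupSchlichtkrull2014}; the paper treats this as the one imported estimate. The steps are: a Cauchy integral representation, a choice of circle through a saddle point, reduction to a one-variable maximization, and a Stirling-type comparison of Gamma ratios with $T_{n,\alpha,\beta}$.

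\emph{Step 1: contour representation.} Start from Rodrigues' formula and write the Jacobi polynomial as a Cauchy integral (Schläfli form):
\[
(1-x)^{\alpha}(1+x)^{\beta}P_n^{(\alpha,\beta)}(x)=\frac{(-1)^n}{2^n}\frac{1}{2\pi\ii}\oint\frac{(1-z)^{n+\alpha}(1+z)^{n+\beta}}{(z-x)^{n+1}}\,dz .
\]
Take the contour to be a circle $|z-x|=r$ with $r<1-|x|$, so that the branches of $(1\mp z)^{\cdot}$ stay analytic. Equivalently, substitute $z=x+re^{\ii\theta}$ and obtain a trigonometric integral of the modulus
\[
|1-x-re^{\ii\theta}|^{n+\alpha}\,|1+x+re^{\ii\theta}|^{n+\beta}.
\]
To get a sharp constant rather than $C<12$, use instead the real-parameter formula of Haagerup--Schlichtkrull. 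It writes $g_n^{(\alpha,\beta)}(\cos\phi)$ as a normalized integral over $\theta$ whose integrand is a product of powers of $|\cos(\phi/2)+\ii e^{\ii\theta}\sin(\phi/2)|$-type factors. By the triangle inequality this bounds $|g_n|$ by an explicit Gamma-ratio constant times $\sup_\theta$ of a positive function.

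\emph{Step 2: optimizing the radius.} Choose $r$ (or the saddle parameter) so that the integrand modulus is maximized at $\theta=0$ and is constant to second order. The integral is then controlled by a Beta integral,
\[
\frac{1}{2\pi}\int|\cos\theta|^{2p}\,d\theta=\frac{\Gamma(p+\tfrac12)}{\sqrt\pi\,\Gamma(p+1)} .
\]
The saddle equation has a real admissible solution precisely when $|x|\le(\alpha+\beta)/(\alpha+\beta+2n)=\sigma$. This is where the hypothesis enters: outside $[-\sigma,\sigma]$ the optimal circle would cross a branch point $\pm1$.

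\emph{Step 3: comparison with $T_{n,\alpha,\beta}$.} After this choice the bound becomes a product of Gamma ratios, for instance
\[
\left[\frac{\Gamma(n+1)\Gamma(n+\alpha+\beta+1)}{\Gamma(n+\alpha+1)\Gamma(n+\beta+1)}\right]^{1/2}
\]
times Beta-type quotients and powers $(a/(a+b))^{a}$-type terms evaluated at the saddle. Maximizing over $|x|\le\sigma$ reduces the bound to one evaluated at the endpoint $|x|=\sigma$. It then remains to show that this expression is at most $T_{n,\alpha,\beta}$. Use log-convexity of $\Gamma$, for example the Gautschi/Wendel inequalities $\Gamma(s+\tfrac12)/\Gamma(s+1)\le(s+\tfrac12)^{-1/2}$ and $\Gamma(s+\lambda)/\Gamma(s)\le s^{\lambda}$ for $0\le\lambda\le1$. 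These turn the Gamma ratios into the rational expression $[(n+1)(n+\alpha+\beta+1)/((n+\alpha+1)(n+\beta+1))]^{1/4}$.

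\textbf{Main obstacle.} I expect the hard step to be this final conversion. Gamma-function inequalities must produce exactly the fourth-root rational factor with constant $1$, uniformly in real $\alpha,\beta\ge0$ and $n\ge1$. If the direct Wendel bounds lose a constant, I would fall back on monotonicity in $n$ of the ratio of the two sides, checked via digamma inequalities. For the integer case this is exactly the computation in \cite[Equation~(20)]{HaagerupSchlichtkrull2014}, and nothing in the argument uses integrality, so their computation should transfer verbatim to real parameters on $|x|\le\sigma$.
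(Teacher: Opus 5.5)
Your outer frame matches the paper's proof: a Rodrigues--Cauchy integral over a circle centred at $x$, with $|x|<\sigma$ entering only because the circle must avoid the cuts, followed by a gamma-function inequality. The gap is that the intermediate steps you describe are not the ones that work, and the final step is left open with tools that cannot close it.

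The paper uses the fixed radius $\rho=\bigl((1-x^2)/(a+b+1)\bigr)^{1/2}$, where $a=\alpha/n$ and $b=\beta/n$. This is the circle through the saddle points when these are non-real. The condition $x\pm\rho\in(-1,1)$ is then exactly $|x|<\sigma$; it is a condition on this circle, not on solvability of a saddle equation. The paper simply bounds $I_n^{(\alpha,\beta)}(x)$ by the supremum of the integrand. There is no Beta integral, and the maximizing angle is generally not $\theta=0$; the modulus is not a power of $|\cos\theta|$, so no Beta integral computes it, and none is needed. The modulus is $e^{nf(\cos\theta)}$, where $f(t)=\frac{a+1}{2}\log(t_2-t)+\frac{b+1}{2}\log(t-t_1)+\mathrm{const}$ is concave with maximum at $t_0=\frac{(a+1)t_1+(b+1)t_2}{a+b+2}$. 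Because $t_2-t_1=\frac{a+b+2}{(a+b+1)\rho}$, one finds
\[
e^{nf(t_0)}=2^{n+(\alpha+\beta)/2}(1-x)^{\alpha/2}(1+x)^{\beta/2}K^{n/2},
\qquad
K=\frac{(a+1)^{a+1}(b+1)^{b+1}}{(a+b+1)^{a+b+1}}.
\]
This cancels exactly half of the Jacobi weight, so $|g_n^{(\alpha,\beta)}(x)|^2\le\frac{\Gamma(n+1)\Gamma(n+\alpha+\beta+1)}{\Gamma(n+\alpha+1)\Gamma(n+\beta+1)}K^n$, with a right-hand side independent of $x$. Your step of maximizing over $x$ and evaluating at $|x|=\sigma$ is therefore unnecessary and unsupported. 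Also, the factors $|\cos(\phi/2)+\ii e^{\ii\theta}\sin(\phi/2)|$ you mention arise from the radius $\sqrt{1-x^2}=\sin\phi$. That circle meets a cut for every $0<\phi<\pi$, so it cannot carry non-integer exponents.

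The remaining inequality, $\frac{\Gamma(n+1)\Gamma(n+\alpha+\beta+1)}{\Gamma(n+\alpha+1)\Gamma(n+\beta+1)}K^n\le T_{n,\alpha,\beta}^2$, is the real-parameter \cite[Lemma~4.1]{HaagerupSchlichtkrull2014}, which the paper imports. Wendel/Gautschi bounds will not produce it with constant one. They are designed for shifts $\lambda\in[0,1]$, whereas $\alpha,\beta$ are unbounded, and they do not see the exact cancellation against $K^n=\frac{(n+\alpha)^{n+\alpha}(n+\beta)^{n+\beta}}{n^n(n+\alpha+\beta)^{n+\alpha+\beta}}$. The first bound you quote is moreover reversed: at $s=0$ it reads $\sqrt\pi\le\sqrt2$. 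The missing idea is that, after taking logarithms, the inequality is precisely
\[
h(n)+h(n+\alpha+\beta)\le h(n+\alpha)+h(n+\beta),
\qquad
h(r)=\log\Gamma(r+1)-r\log r-\tfrac12\log(r+1).
\]
This follows from the concavity of $h$ proved in \Cref{lem:gamma}, since $n+\alpha$ and $n+\beta$ are convex combinations of $n$ and $n+\alpha+\beta$ with complementary weights $\beta/(\alpha+\beta)$ and $\alpha/(\alpha+\beta)$. With the sup-bound computation and this inequality in place, your closing remark becomes exactly the paper's argument: the integer-parameter calculation transfers verbatim to $|x|<\sigma$, and to $|x|=\sigma$ by continuity.
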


\begin{proof}
We spell out the real-parameter part of the contour argument, since it
is the only external analytic estimate used in the proof.  Put
\[
a=\frac{\alpha}{n},\qquad b=\frac{\beta}{n},\qquad
\rho=\left(\frac{1-x^2}{a+b+1}\right)^{1/2}.
\]
The Rodrigues--Cauchy representation
\cite[Equations~(6)--(7)]{HaagerupSchlichtkrull2014} is
\begin{equation}\label{eq:central-cauchy}
\begin{aligned}
(1-x)^\alpha(1+x)^\beta P_n^{(\alpha,\beta)}(x)
&=\left(-\frac12\right)^n I_n^{(\alpha,\beta)}(x),\\
I_n^{(\alpha,\beta)}(x)
&=\frac1{2\pi\ii}\int_\gamma
(1-z)^{n+\alpha}(1+z)^{n+\beta}
\frac{dz}{(z-x)^{n+1}}.
\end{aligned}
\end{equation}
For real parameters the principal branches in
\eqref{eq:central-cauchy} are analytic inside the circle
\(\gamma=C(x,\rho)\) provided that the circle misses both cuts.  The
two elementary equivalences
\[
x+\rho<1\iff x<\frac{a+b}{a+b+2},
\qquad
x-\rho>-1\iff x>-\frac{a+b}{a+b+2}
\]
show that this is precisely the case when \(|x|<\sigma\).

On writing \(z=x+\rho e^{\ii\theta}\), the modulus of the integrand
in \eqref{eq:central-cauchy} is \(e^{nf(\cos\theta)}\), where
\begin{align*}
f(t)={}&\frac{a+1}{2}
 \log\!\left(\rho^2+(1-x)^2-2\rho(1-x)t\right)\\
&+\frac{b+1}{2}
 \log\!\left(\rho^2+(1+x)^2+2\rho(1+x)t\right)-\log\rho.
\end{align*}
Indeed, for the principal branches
\(|(1-z)^{n+\alpha}|=|1-z|^{n+\alpha}\), and similarly at
\(-1\).  Set
\[
t_2=\frac{\rho^2+(1-x)^2}{2\rho(1-x)},
\qquad
t_1=-\frac{\rho^2+(1+x)^2}{2\rho(1+x)}.
\]
Then \([ -1,1]\subset[t_1,t_2]\), and, up to a constant independent
of \(t\),
\[
f(t)=\frac{a+1}{2}\log(t_2-t)
     +\frac{b+1}{2}\log(t-t_1).
\]
Thus \(f''<0\) on \((t_1,t_2)\), and its maximum occurs at
\[
t_0=\frac{(a+1)t_1+(b+1)t_2}{a+b+2}.
\]
Consequently \(|I_n^{(\alpha,\beta)}(x)|\le e^{nf(t_0)}\).
The direct evaluation of \(f(t_0)\), as in
\cite[Equations~(18)--(19)]{HaagerupSchlichtkrull2014}, together with
the real-parameter gamma inequality
\begin{align*}
&\frac{\G(n+1)\G(n+\alpha+\beta+1)}
{\G(n+\alpha+1)\G(n+\beta+1)}
\left[
\frac{(a+1)^{a+1}(b+1)^{b+1}}
{(a+b+1)^{a+b+1}}
\right]^n\\
&\hspace{35mm}\le
\left[
\frac{(n+1)(n+\alpha+\beta+1)}
{(n+\alpha+1)(n+\beta+1)}
\right]^{1/2},
\end{align*}
proved in \cite[Lemma~4.1]{HaagerupSchlichtkrull2014}, gives after
substitution in \eqref{eq:def-g}
\[
|g_n^{(\alpha,\beta)}(x)|\le T_{n,\alpha,\beta}.
\]
No multiplicative constant is lost.  This also explains directly why
the sentence in \cite[Section~5, Case~2]{HaagerupSchlichtkrull2014}
extends the integral-parameter calculation without modification.
The two points \(x=\pm\sigma\) follow by continuity.  The excluded
corner \((\alpha,\beta)=(0,0)\), for which the open interval is empty,
will be treated in \Cref{sec:completion}.
\end{proof}

\section{Sturm--Sonin localization on an endpoint cap}\label{sec:sonin}

The central estimate leaves two endpoint caps.  We treat the right cap
first; the left cap will follow by reflection when the proof is
assembled.

Throughout this section, assume \(n\ge1\), \(a>0\), and \(b\ge0\).
Set
\begin{equation}\label{eq:B-H-p}
B=n+a+b+1,\qquad
H(u)=g_n^{(a,b)}\left(1-\frac{2u}{B}\right),\qquad
p(u)=u\left(1-\frac uB\right).
\end{equation}

\begin{lemma}[Endpoint equation]\label{lem:endpoint-equation}
The function \(H\) satisfies
\begin{equation}\label{eq:sturm}
(pH')'+QH=0,
\end{equation}
where
\begin{equation}\label{eq:Q}
Q(u)=n+\frac12-\frac{n+1}{2B}
-\frac{\left(\left(1-\frac{n+1}{B}\right)u-a\right)^2}
{4u(1-u/B)}.
\end{equation}
Writing \(P=pQ\), \(r=1-(n+1)/B\), and \(\nu=n+r/2\), one has
\begin{equation}\label{eq:P-quadratic}
P(u)=-\frac{a^2}{4}
+\left(\nu+\frac{ra}{2}\right)u
-\left(\frac{\nu}{B}+\frac{r^2}{4}\right)u^2.
\end{equation}
\end{lemma}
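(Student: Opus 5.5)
The plan is to derive \eqref{eq:sturm} from the Jacobi differential equation by two substitutions: first pass to the weighted function, then rescale to the variable \(u\). The normalizing gamma constant in \eqref{eq:def-g} is irrelevant, since the equation is linear. So it suffices to treat
\[
w(x)=\Bigl(\tfrac{1-x}{2}\Bigr)^{a/2}\Bigl(\tfrac{1+x}{2}\Bigr)^{b/2}P_n^{(a,b)}(x).
\]

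\textbf{Step 1: the weighted equation in \(x\).} The Jacobi equation in Sturm--Liouville form is
\[
\bigl((1-x)^{a+1}(1+x)^{b+1}y'\bigr)'+n(n+a+b+1)(1-x)^a(1+x)^by=0 .
\]
Substituting \(y=(1-x)^{-a/2}(1+x)^{-b/2}w\) gives the standard self-adjoint form
\[
\bigl((1-x^2)w'\bigr)'+\Bigl[\Bigl(n+\tfrac{a+b+1}{2}\Bigr)^2-\tfrac14-\tfrac{a^2}{2(1-x)}-\tfrac{b^2}{2(1+x)}\Bigr]w=0 .
\]
This equals \(n(n+a+b+1)+\frac{a+b}{2}+\frac{(a+b)^2}{4}-\frac{a^2}{2(1-x)}-\frac{b^2}{2(1+x)}\). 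I would check the constant against the case \(a=b=0\) (Legendre) and against the known weighted form in \cite{Szego1975}.

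\textbf{Step 2: change of variables.} Put \(x=1-2u/B\), so \(1-x=2u/B\), \(1+x=2(1-u/B)\) and \(1-x^2=(4/B)\,p(u)\). Since \(d/dx=-(B/2)\,d/du\), the operator becomes
\[
\bigl((1-x^2)w'\bigr)'=B\,(pH')' .
\]
The potential terms become \(a^2/(2(1-x))=Ba^2/(4u)\) and \(b^2/(2(1+x))=b^2/(4(1-u/B))\). Dividing by \(B\) yields \((pH')'+QH=0\), with
\[
Q=\frac{c}{B}-\frac{a^2}{4u}-\frac{b^2}{4B(1-u/B)},
\]
where \(c\) is the constant bracket from Step 1.

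\textbf{Step 3: match \eqref{eq:Q} and \eqref{eq:P-quadratic}.} This step is pure algebra and is the main place for error. Multiply by \(p\) to get
\[
P=pQ=\frac{c}{B}\,u\Bigl(1-\frac uB\Bigr)-\frac{a^2}{4}\Bigl(1-\frac uB\Bigr)-\frac{b^2u}{4B}.
\]
This is a quadratic in \(u\) with constant term \(-a^2/4\), linear coefficient \(c/B+a^2/(4B)-b^2/(4B)\) and quadratic coefficient \(-c/B^2\).

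I would verify each coefficient against \eqref{eq:P-quadratic} using \(B=n+a+b+1\), \(r=(a+b)/B\) and \(\nu=n+r/2\):
\begin{itemize}
\item For the quadratic coefficient, check \(c/B^2=\nu/B+r^2/4\). This reduces to \(c=B\nu+(a+b)^2/4\), and indeed \(B\nu=nB+(a+b)/2\).
\item For the linear coefficient, check that \(\bigl(4c+a^2-b^2\bigr)/(4B)\) equals \(\nu+ra/2\). This is a similar expansion.
\end{itemize}

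Finally, I would confirm that \eqref{eq:Q} expands to the same \(P\). Multiplying \eqref{eq:Q} by \(p\), the term \((ru-a)^2/4\) gives \(-\frac{r^2u^2}{4}+\frac{rau}{2}-\frac{a^2}{4}\). The term \(\bigl(n+\frac12-\frac{n+1}{2B}\bigr)p\) gives \(\nu u-\nu u^2/B\), because \(\frac12-\frac{n+1}{2B}=r/2\), so that constant is exactly \(\nu\). Together these reproduce \eqref{eq:P-quadratic} at once.

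Hence \eqref{eq:Q} and \eqref{eq:P-quadratic} agree, and it only remains to check that the Step 2 expression of \(Q\) matches \eqref{eq:P-quadratic}, which is the coefficient comparison above.
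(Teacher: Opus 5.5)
Your proposal is correct and follows the paper's route: conjugate the Jacobi equation by the square root of the weight, substitute \(x=1-2u/B\) (so that \(((1-x^2)w')'=B(pH')'\)), divide by \(B\), and multiply by \(p\). The only difference is in how the potential is written. The paper uses \(\kappa-\frac{(d-sx)^2}{4(1-x^2)}\) with \(s=a+b\), \(d=b-a\), \(\kappa=n(n+s+1)+s/2\); since \(d-sx=2(su/B-a)\), this gives the perfect-square form \eqref{eq:Q} directly. Your partial-fraction form is the same potential, because \(c=\kappa+s^2/4\), but it needs the extra coefficient comparison. The linear-coefficient check you only sketched does hold: it reduces to \(s^2+a^2-b^2=2as\).
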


\begin{proof}
Let \(y=P_n^{(a,b)}\), \(A(x)=1-x^2\), \(s=a+b\), and \(d=b-a\).
Conjugating the standard Jacobi equation by the square root of its
weight gives
\begin{equation}\label{eq:weighted-jacobi}
(Ag')'+\frac{F(x)}{A(x)}g=0,
\qquad
F(x)=\kappa A(x)-\frac14(d-sx)^2,
\end{equation}
for any constant multiple \(g\) of
\((1-x)^{a/2}(1+x)^{b/2}y(x)\), where
\[
\kappa=n(n+s+1)+\frac{s}{2}.
\]
Under \(x=1-2u/B\),
\[
A(x)=\frac{4u(B-u)}{B^2},
\qquad
d-sx=2\left(\frac{su}{B}-a\right),
\]
and
\[
(Ag')'=(u(B-u)H')'.
\]
Dividing the resulting equation by \(B\), using
\[
\frac{s}{B}=1-\frac{n+1}{B},
\qquad
\frac{\kappa}{B}
=n+\frac12-\frac{n+1}{2B},
\]
gives \eqref{eq:sturm}--\eqref{eq:Q}.  Multiplication by \(p\) and
expansion give \eqref{eq:P-quadratic}.
\end{proof}

Let \(s=a+b\) and
\(\sigma=s/(s+2n)\).  The point \(x=\sigma\) corresponds to
\begin{equation}\label{eq:u-sigma}
u_\sigma=\frac{B(1-\sigma)}2
=\frac{nB}{B+n-1}\le n.
\end{equation}

\begin{lemma}[Monotonicity of the Sturm product]\label{lem:P-monotone}
On \(0\le u\le u_\sigma\),
\[
P'(u)>0.
\]
More precisely,
\begin{equation}\label{eq:P-prime-boundary}
P'(u_\sigma)=\frac{(a+b)(n+a+1)}{2B}>0.
\end{equation}
\end{lemma}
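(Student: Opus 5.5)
P is a quadratic in u with negative leading coefficient, so P' is affine and decreasing: P'(u) = (ν + ra/2) − 2(ν/B + r²/4)u. Hence P' > 0 on [0,u_σ] follows from P'(u_σ) > 0, and the lemma reduces to verifying the identity \eqref{eq:P-prime-boundary}.

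First I would confirm that the leading coefficient ν/B + r²/4 is positive. This holds since B > n+1 > 0 gives r ∈ (0,1) and ν > 0 (note r>0 uses a>0). Thus P' is strictly decreasing and attains its minimum over [0,u_σ] at the right endpoint u_σ.

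Next comes the computation of P'(u_σ). I would express everything in terms of n, s=a+b and B=n+s+1. Then r = s/B, ν = n + s/(2B), and u_σ = nB/(B+n-1) = nB/(2n+s). So
\[
P'(u_\sigma)=n+\frac{s}{2B}+\frac{sa}{2B}
-\frac{2nB}{2n+s}\left(\frac{n}{B}+\frac{s}{2B^2}+\frac{s^2}{4B^2}\right).
\]
The last term simplifies to \(\frac{n(4nB+2s+s^2)}{2B(2n+s)}\). Putting everything over the common denominator 2B(2n+s) gives the numerator
\[
(2nB+s+sa)(2n+s)-n(4nB+2s+s^2).
\]
Expanding, 2nB·s − 2ns + s(1+a)(2n+s) − ns² remains after the cancellation of 4n²B. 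Substituting B = n+s+1 turns 2nsB − 2ns − ns² into 2n²s + ns². The numerator therefore becomes 2n²s + ns² + s(1+a)(2n+s), which factors as s(2n+s)(n+1+a). Dividing by 2B(2n+s) gives exactly (a+b)(n+a+1)/(2B).

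This value is positive because a>0. Combined with the monotone decrease of P', this yields P'(u)≥P'(u_σ)>0 on [0,u_σ].

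The only real obstacle is the algebra in the second step, in particular keeping the u_σ expression in its simplified form nB/(2n+s). Writing B+n−1 = 2n+s makes the factorization transparent.
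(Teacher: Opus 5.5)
Your proof is correct and follows the paper's argument: $P''<0$ makes $P'$ decreasing, so everything reduces to evaluating $P'(u_\sigma)$, which the paper obtains by ``direct substitution'' of $u_\sigma=nB/(B+n-1)$ into \eqref{eq:P-quadratic}. Your explicit computation with $r=s/B$, $\nu=n+s/(2B)$, $u_\sigma=nB/(2n+s)$ is right, including the factorization $s(2n+s)(n+a+1)$ of the numerator; it simply supplies the algebra the paper leaves to the reader.
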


\begin{proof}
Equation \eqref{eq:P-quadratic} shows that \(P''<0\), so \(P'\) is
decreasing.  Direct substitution of \eqref{eq:u-sigma} into
\eqref{eq:P-quadratic} gives \eqref{eq:P-prime-boundary}.  Hence
\(P'(u)\ge P'(u_\sigma)>0\) throughout the cap.
\end{proof}

\begin{lemma}[First-lobe geometry]\label{lem:first-lobe-geometry}
Let
\[
0<\zeta_1<\cdots<\zeta_n<B
\]
be the zeros of \(H\) in the \(u\)-coordinate.  On the first lobe,
\(0<u<\zeta_1\), one has
\begin{equation}\label{eq:first-lobe-factorization}
H(u)=K\,u^{a/2}(B-u)^{b/2}
\prod_{j=1}^n(\zeta_j-u),
\qquad K>0.
\end{equation}
Moreover,
\begin{equation}\label{eq:first-lobe-log-concavity}
(\log H)''(u)
=-\frac{a}{2u^2}-\frac{b}{2(B-u)^2}
-\sum_{j=1}^n\frac1{(\zeta_j-u)^2}<0.
\end{equation}
Consequently, \(H\) has a unique critical point
\(u_1\in(0,\zeta_1)\); it is the positive maximum of the first lobe.
\end{lemma}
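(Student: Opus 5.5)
We will obtain the factorization \eqref{eq:first-lobe-factorization} directly from the definition \eqref{eq:def-g} and the substitution \(x=1-2u/B\), and then differentiate its logarithm twice.

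\textbf{Factorization.} Under the substitution we have
\[
\frac{1-x}{2}=\frac uB,\qquad \frac{1+x}{2}=\frac{B-u}{B}.
\]
Hence the weight factor in \eqref{eq:def-g} equals \(B^{-(a+b)/2}u^{a/2}(B-u)^{b/2}\). The polynomial \(P_n^{(a,b)}(1-2u/B)\) has degree exactly \(n\) in \(u\). Its \(n\) simple zeros in \((-1,1)\) correspond bijectively to zeros \(\zeta_j\in(0,B)\), so it equals \(c\prod_{j=1}^n(\zeta_j-u)\) for some constant \(c\neq0\). To fix the sign of \(c\), evaluate at \(u=0\), i.e. \(x=1\). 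There \(P_n^{(a,b)}(1)=\G(n+a+1)/(n!\,\G(a+1))>0\) and \(\prod\zeta_j>0\), so \(c>0\). Since the gamma prefactor in \eqref{eq:def-g} is also positive, we get \(K>0\).

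On \(0<u<\zeta_1\), every factor \(\zeta_j-u\) is positive, and so are \(u^{a/2}\) and \((B-u)^{b/2}\). Thus \(H>0\) on the first lobe, and \(\log H\) is well defined there.

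\textbf{Second derivative.} Taking logarithms,
\[
\log H=\log K+\frac a2\log u+\frac b2\log(B-u)+\sum_j\log(\zeta_j-u).
\]
Differentiating twice term by term gives \eqref{eq:first-lobe-log-concavity}:
\begin{itemize}
\item the term \(\frac a2\log u\) contributes \(-a/(2u^2)\);
\item the term \(\frac b2\log(B-u)\) contributes \(-b/(2(B-u)^2)\);
\item each term \(\log(\zeta_j-u)\) contributes \(-(\zeta_j-u)^{-2}\).
\end{itemize}
Strict negativity holds because \(a>0\) on this section, and in any case \(n\ge1\) makes the sum strictly positive.

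\textbf{Unique critical point.} The derivative \((\log H)'\) is strictly decreasing on \((0,\zeta_1)\). As \(u\to0^+\), we have \((\log H)'\sim a/(2u)\to+\infty\), using \(a>0\). As \(u\to\zeta_1^-\), the term \(-1/(\zeta_1-u)\to-\infty\) dominates. By the intermediate value theorem and strict monotonicity, \((\log H)'\) has exactly one zero \(u_1\). Since \(H'=H(\log H)'\) with \(H>0\), this \(u_1\) is the unique critical point of \(H\) on the lobe. It is a maximum: \(H\) increases before \(u_1\) and decreases after it, and \(H\to0\) at both ends.

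The only step needing any care is the sign of \(K\), which the endpoint value \(P_n^{(a,b)}(1)>0\) settles.
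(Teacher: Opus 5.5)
Your proposal is correct and follows essentially the same route as the paper. Both arguments factor the transformed Jacobi polynomial over its zeros in \((0,B)\), fix the sign of \(K\) through \(P_n^{(a,b)}(1)>0\), differentiate \(\log H\) twice, and conclude from the strict decrease of \((\log H)'\) together with its limits \(+\infty\) at \(0\) and \(-\infty\) at \(\zeta_1\).
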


\begin{proof}
The standard zero theorem for Jacobi polynomials places all transformed
zeros strictly between \(0\) and \(B\).  Combining the resulting
polynomial factorization with the two weight factors in \(H\) gives
\eqref{eq:first-lobe-factorization}; the sign is positive because
\(P_n^{(a,b)}(1)>0\).  Two differentiations give
\eqref{eq:first-lobe-log-concavity}.  Thus \((\log H)'\) is strictly
decreasing, tends to \(+\infty\) at \(0\), and tends to \(-\infty\)
at \(\zeta_1\), proving the final assertion.
\end{proof}

\begin{lemma}[Sonin identity]\label{lem:sonin}
On any interval on which \(Q>0\), define
\[
\mathcal S(u)=H(u)^2+\frac{p(u)H'(u)^2}{Q(u)}.
\]
Then
\begin{equation}\label{eq:sonin-derivative}
\mathcal S'(u)
=-\frac{P'(u)}{Q(u)^2}H'(u)^2.
\end{equation}
In particular, \(\mathcal S\) is nonincreasing on every subinterval of
the endpoint cap on which \(Q>0\).
\end{lemma}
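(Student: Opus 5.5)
We differentiate \(\mathcal S\) directly and then eliminate the second derivative using the endpoint equation \eqref{eq:sturm} of \Cref{lem:endpoint-equation}. It is convenient to write the second summand as \((pH')^2/(pQ)=(pH')^2/P\), where \(P=pQ\) as in \eqref{eq:P-quadratic}. On the open cap \(0<u<B\) we have \(p>0\), so \(Q>0\) is equivalent to \(P>0\) there, and \(\mathcal S\) is smooth on any such interval.

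With \(W=pH'\), equation \eqref{eq:sturm} reads \(W'=-QH\). Differentiating gives
\[
\mathcal S'=2HH'+\frac{2WW'}{P}-\frac{W^2P'}{P^2}
=2HH'-\frac{2pH'\,QH}{pQ}-\frac{p^2H'^2P'}{p^2Q^2}.
\]
The first two terms cancel exactly. What remains is \(-P'H'^2/Q^2\), which is \eqref{eq:sonin-derivative}.

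For the monotonicity claim, we read the endpoint cap as \(0\le u\le u_\sigma\), the image of \(\sigma\le x\le1\). On that range \Cref{lem:P-monotone} gives \(P'>0\). Hence \(\mathcal S'\le0\) on every subinterval where \(Q>0\), so \(\mathcal S\) is nonincreasing there.

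The only delicate point is bookkeeping. We must use \(P=pQ\), not \(Q\), in the derivative of the second term: the factor \(p\) in \(\mathcal S\) and the variable coefficient \(p\) in the equation combine so that only \(P'\) survives. We must also make sure \(Q\) does not vanish, which is guaranteed by the hypothesis. No other obstacle is expected.
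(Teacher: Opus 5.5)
Your proof is correct and follows the paper's argument: differentiate \(\mathcal S\), eliminate \(H''\) using \((pH')'=-QH\), cancel the mixed terms, and recognize \(p'Q+pQ'=P'\). Writing the second term as \((pH')^2/P\) only repackages that computation, and your explicit appeal to \Cref{lem:P-monotone} for the monotonicity clause supplies the step the paper leaves implicit.
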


\begin{proof}
Differentiate \(\mathcal S\), use
\(pH''+p'H'=-QH\), and recall that \(P=pQ\).  The mixed terms cancel,
leaving
\[
\mathcal S'
=-\frac{p'Q+pQ'}{Q^2}H'^2
=-\frac{P'}{Q^2}H'^2.
\]
\end{proof}

\begin{proposition}[First-lobe reduction]\label{prop:first-lobe-reduction}
Let \(\zeta_1\) be the first zero of \(H\) in the \(u\)-coordinate,
and let \(u_1\in(0,\zeta_1)\) be the maximum point of the first lobe,
with \(A=H(u_1)>0\).

\begin{enumerate}[label=\textup{(\roman*)}]
\item If \(P(u_\sigma)\le0\), then \(H\) is nonnegative on
\([0,u_\sigma]\), positive on \((0,u_\sigma]\), and strictly increasing;
hence the right cap is controlled by
\Cref{lem:central}.
\item If \(P(u_\sigma)>0\), then either \(u_1>u_\sigma\), in which case
the cap is again increasing up to its central boundary, or
\(u_1\le u_\sigma\), in which case
\begin{equation}\label{eq:sonin-pointwise}
\lvert H(u)\rvert\le A,\qquad 0\le u\le u_\sigma.
\end{equation}
\end{enumerate}
\end{proposition}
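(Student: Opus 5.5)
The plan is to exploit the sign structure of \(P=pQ\) on the cap. By \Cref{lem:P-monotone}, \(P\) is strictly increasing on \([0,u_\sigma]\), and by \eqref{eq:P-quadratic} we have \(P(0)=-a^2/4<0\). Since \(p>0\) on \((0,B)\), \(Q\) has the same sign as \(P\) on \((0,u_\sigma]\). So there are only two configurations. Either \(Q<0\) on all of \((0,u_\sigma)\), which is case (i). Or there is a unique \(u_*\in(0,u_\sigma)\) with \(Q<0\) on \((0,u_*)\) and \(Q>0\) on \((u_*,u_\sigma]\), which is case (ii).

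The basic tool is a monotonicity argument on the region where \(Q<0\). By \eqref{eq:first-lobe-factorization}, near \(u=0\) we have \(H(u)\sim K' u^{a/2}\) with \(K'>0\). Hence \(H(0)=0\), \(H>0\) just to the right of \(0\), and \(pH'=O(u^{a/2})\to0\) as \(u\to0^+\); here we use \(a>0\). On any initial interval \((0,v)\) where \(Q<0\) and \(H>0\), \eqref{eq:sturm} gives \((pH')'=-QH>0\). So \(pH'\) is strictly increasing from the limit \(0\), hence positive, and \(H'>0\). In particular \(H\) cannot return to zero there, and a continuation argument shows \(H>0\) and \(H'>0\) on the whole interval where \(Q<0\).

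In case (i) this applies on \((0,u_\sigma)\), and by continuity \(H'(u_\sigma)\ge0\). Thus \(H\) is nonnegative on \([0,u_\sigma]\), positive on \((0,u_\sigma]\) and strictly increasing. Consequently \(|H(u)|\le H(u_\sigma)=g_n^{(a,b)}(\sigma)\), which is bounded by \Cref{lem:central}. (When \(b=0\) we have \(a>0\), so \(a+b>0\) and the lemma applies.)

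In case (ii) the same argument shows that \(H>0\) and \(H'>0\) on \((0,u_*]\); strictness of \(H'\) at \(u_*\) follows because \(pH'\) is strictly increasing on \((0,u_*]\). Hence the first-lobe maximizer satisfies \(u_1>u_*\). If \(u_1>u_\sigma\), then \([0,u_\sigma]\subset[0,u_1)\). By \Cref{lem:first-lobe-geometry}, \(u_1\) is the unique critical point of \(H\) in \((0,\zeta_1)\), and \((\log H)'\) is decreasing, so \(H'>0\) on \((0,u_1)\). Thus \(H\) is again increasing up to \(u_\sigma\), and the central estimate controls the cap.

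If instead \(u_1\le u_\sigma\), we split the cap as \([0,u_1]\cup[u_1,u_\sigma]\). On \([0,u_1]\) the bound \(0\le H\le A\) is immediate from the first-lobe description. On \([u_1,u_\sigma]\subset(u_*,u_\sigma]\) we have \(Q>0\), so the Sonin function \(\mathcal S\) of \Cref{lem:sonin} is defined. By \eqref{eq:sonin-derivative} and \(P'>0\) it is nonincreasing there. Since \(H'(u_1)=0\), we get \(\mathcal S(u_1)=A^2\), and therefore \(H(u)^2\le\mathcal S(u)\le A^2\) for \(u\in[u_1,u_\sigma]\). This gives \eqref{eq:sonin-pointwise}.

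The step needing most care is the boundary behaviour at \(u=0\): establishing \(pH'\to0\) and the positivity start from the factorization. A second delicate point is excluding \(u_1\le u_*\), so that the Sonin argument starts inside the region \(Q>0\); I will handle this via the strict positivity of \(pH'\) at \(u_*\).
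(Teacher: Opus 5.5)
Your proposal is correct and follows essentially the same route as the paper. Both arguments use the same four steps:
\begin{enumerate}[label=\textup{(\arabic*)}]
\item the sign of \(P\), and hence of \(Q\), on the cap;
\item a first-zero monotonicity argument for \(pH'\) on the region \(Q\le0\), starting from \(pH'\to0\) at the origin;
\item placing the unique first-lobe critical point \(u_1\) beyond the zero of \(P\);
\item the Sonin function of \Cref{lem:sonin} on \([u_1,u_\sigma]\).
\end{enumerate}
You are slightly more explicit about the boundary details, namely the strict positivity of \(pH'\) at the zero of \(Q\) and the condition \(a+b>0\) needed for \Cref{lem:central}, but there is no substantive difference.
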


\begin{proof}
Since \(P(0)=-a^2/4<0\) and \(p>0\) on the open cap, \(P\) and \(Q\)
have the same sign there.  Near \(u=0\),
\[
H(u)\sim C u^{a/2}>0,\qquad
w(u):=p(u)H'(u)\longrightarrow0,
\]
with \(w(u)>0\) for small \(u>0\).  Moreover,
\(Q(u)H(u)=O(u^{a/2-1})\), which is integrable at zero.

If \(P(u_\sigma)\le0\), the strict increase of \(P\) implies \(Q\le0\)
throughout the open cap.  As long as \(H>0\),
\[
w'=-QH\ge0.
\]
Thus \(w\ge0\) and \(H'=w/p\ge0\).  A first-zero continuation argument
shows that neither \(H\) nor \(w\) can lose its sign.  This proves (i).

Suppose \(P(u_\sigma)>0\).  By \Cref{lem:P-monotone}, \(P\) has exactly
one zero \(u_0\in(0,u_\sigma)\).  The same argument shows that \(H\) is
positive and increasing through \(u_0\).  By
\Cref{lem:first-lobe-geometry}, the unique first-lobe critical point
is \(u_1\), and therefore \(u_1\in(u_0,\zeta_1)\).  If
\(u_1>u_\sigma\), there is no critical point
inside the cap and the cap is increasing.  If \(u_1\le u_\sigma\), then
\(Q>0\) on \([u_1,u_\sigma]\), so \Cref{lem:sonin} gives
\[
H(u)^2\le\mathcal S(u)\le\mathcal S(u_1)=A^2,
\qquad u_1\le u\le u_\sigma.
\]
On \([0,u_1]\), the definition of the first critical point gives
\(0\le H(u)\le A\).  This proves \eqref{eq:sonin-pointwise}.
\end{proof}

\section{The remaining endpoint regime}\label{sec:overflow}

The Sonin reduction leaves only the overflow case
\(P(u_\sigma)>0\).  The next lemma confines it to the parameter range
in which the first-lobe estimate can be compared with the KKT target.

\begin{lemma}[Overflow restriction]\label{lem:overflow}
If \(P(u_\sigma)>0\), then
\begin{equation}\label{eq:overflow-restriction}
a<n+\sqrt{4n^2+2n}<3(n+1).
\end{equation}
\end{lemma}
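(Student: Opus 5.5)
We will make the overflow condition explicit and then eliminate \(b\). First, \(P(u_\sigma)\) can be expressed in the original variable \(x\). Comparing \eqref{eq:weighted-jacobi} with \eqref{eq:sturm}, the change of variables \(x=1-2u/B\) gives \(p=BA/4\) and \(Q=F/(AB)\). Hence
\[
P(u)=p(u)Q(u)=\tfrac14F(x),\qquad x=1-\frac{2u}{B}.
\]
So \(P(u_\sigma)>0\) is equivalent to \(F(\sigma)>0\). This should be checked against \eqref{eq:P-quadratic} at \(u=0\): there \(x=1\) and \(F(1)=-a^2\), so \(P(0)=-a^2/4\), which is consistent. This identity saves us from substituting \eqref{eq:u-sigma} into the quadratic \eqref{eq:P-quadratic}.

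Next we evaluate at \(\sigma=s/(s+2n)\). One has
\[
1-\sigma^2=\frac{4n(s+n)}{(s+2n)^2},
\qquad
d-s\sigma=\frac{2\bigl(n(b-a)-as\bigr)}{s+2n}.
\]
Here the second identity comes from \((b-a)(a+b+2n)-(a+b)^2=2n(b-a)-2a(a+b)\). Clearing the common denominator \((s+2n)^2\), the overflow condition becomes
\begin{equation*}
4\kappa\, n(n+s)>\bigl(nb-a(n+s)\bigr)^2,
\qquad \kappa=n(n+s+1)+\frac s2 .
\end{equation*}
Both sides are polynomials in \(b\) of degree at most two, with \(a,n\) fixed.

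The main step is to show that this inequality fails for every \(b\ge0\) once \(a\ge n+\sqrt{4n^2+2n}\), that is, once \(a^2-2na-(3n^2+2n)\ge0\). We will write \(D(b)=(nb-a(n+s))^2-4\kappa n(n+s)\) as a quadratic in \(b\) and then proceed in two stages.
\begin{enumerate}[label=\textup{(\roman*)}]
\item At \(b=0\) the condition reads \(4n\kappa_0>a^2(n+a)\), where \(\kappa_0=n(n+a+1)+a/2\). This cubic in \(a\) is much more restrictive than the stated bound.
\item We then show that \(D\) is nonnegative on all of \([0,\infty)\) under the quadratic condition on \(a\). The leading coefficient of \(D\) is \(a^2\) (from \((nb-an-ab-a^2)^2\), whose \(b\)-coefficient is \(n-a\)) minus terms linear in \(b\). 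So one can either complete the square or bound \(D(0)\ge0\) together with \(D'(0)\ge0\) and convexity.
\end{enumerate}
This step is the expected obstacle. The cross term \(-2a(n+s)\cdot nb\) helps, but the \(b\)-growth of \(\kappa(n+s)\) works against us. I would first try the discriminant route: require \(D>0\) for all real \(b\), or at least that the vertex of \(D\) lies at \(b\le0\). I expect the discriminant condition to collapse to \(a^2-2na-3n^2-2n\ge0\), which would explain the exact form \(n+\sqrt{4n^2+2n}\) in \eqref{eq:overflow-restriction}. If the vertex is at negative \(b\), the \(b=0\) case from (i) already settles it.

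Finally, the second inequality in \eqref{eq:overflow-restriction} is elementary. We have \(\sqrt{4n^2+2n}<2n+3\), because \((2n+3)^2=4n^2+12n+9>4n^2+2n\). Adding \(n\) to both sides gives \(n+\sqrt{4n^2+2n}<3(n+1)\).
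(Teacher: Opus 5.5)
\textbf{There is a genuine gap.} Your reduction is correct, and it is cleaner than substituting \eqref{eq:u-sigma} into \eqref{eq:P-quadratic}. From $p=BA/4$ and $Q=F/(AB)$ one gets $P(u)=F(x)/4$, and your evaluations of $1-\sigma^2$ and $d-s\sigma$ are right. But step (ii), which is the whole content of the lemma, is never carried out, and the specifics you give for it are wrong.

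The $b^2$-coefficient of $D$ is not $a^2$. The square contributes $(n-a)^2$ and $4\kappa n(n+s)$ contributes $4n(n+\tfrac12)$, so the leading coefficient is
\[
c_1:=(n-a)^2-2n(2n+1)=a^2-2na-3n^2-2n .
\]
More seriously, the discriminant route you plan to try first cannot succeed. Expanding $D$ shows that it always carries the factor $s+2n$:
\[
D(b)=(a+b+2n)\bigl(c_1b+c_0\bigr),\qquad c_0=(a+n)(a^2-na-2n^2-2n).
\]
Hence its discriminant is the perfect square $\bigl(c_0-(a+2n)c_1\bigr)^2\ge0$, and $D$ vanishes at $b=-(a+2n)$ for every $a$. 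So ``$D>0$ for all real $b$'' never holds. The threshold $n+\sqrt{4n^2+2n}$ does not come from a discriminant; it is where the slope $c_1$ of the linear factor changes sign.

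The missing idea is this factorization. After removing the positive factor $a+b+2n$, the overflow condition is linear in $b$, and $-(c_1b+c_0)$ is exactly the paper's $\Phi_R(n,a,b)$. This is consistent with $P(u_\sigma)=F(\sigma)/4=\Phi_R/(4(a+b+2n))$. With it the proof is immediate. For $a\ge n+\sqrt{4n^2+2n}$ one has $c_1\ge0$ and $c_0=(a+n)\bigl(c_1+n(a+n)\bigr)>0$, so $D(b)>0$ for all $b\ge0$.

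Your convexity fallback would also work, since convexity of $D$ is exactly the condition $c_1\ge0$. But you would still have to verify $D(0)=(a+2n)c_0>0$ and $D'(0)=c_0+(a+2n)c_1>0$, and the proposal does neither. Step (i) only asserts that the $b=0$ condition is ``much more restrictive'' without proof; that condition is $a^2-na-2n^2-2n<0$. Likewise, your ``vertex at $b\le0$'' alternative needs $c_1\ge0$, which you have not established. The final inequality $n+\sqrt{4n^2+2n}<3(n+1)$ is fine.
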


\begin{proof}
Substitution of \eqref{eq:u-sigma} into \eqref{eq:P-quadratic} gives
\begin{equation}\label{eq:P-at-usigma}
P(u_\sigma)=\frac{\Phi_R(n,a,b)}{4(a+b+2n)},
\end{equation}
where
\begin{equation}\label{eq:Phi-R}
\begin{split}
\Phi_R(n,a,b)={}&
b(-a^2+2na+3n^2+2n)\\
&-a^3+(3n^2+2n)a+2n^3+2n^2.
\end{split}
\end{equation}
Set
\[
a_\ast=n+\sqrt{4n^2+2n}.
\]
The coefficient of \(b\) in \eqref{eq:Phi-R} is nonpositive for
\(a\ge a_\ast\).  At \(a=a_\ast\), the remaining term equals
\[
-2n^2(2a_\ast+2n+1)<0,
\]
and its derivative with respect to \(a\) is
\[
-3a^2+3n^2+2n<0,\qquad a\ge a_\ast.
\]
Thus \(\Phi_R(n,a,b)<0\) for \(a\ge a_\ast\) and \(b\ge0\).  Hence
overflow forces \(a<a_\ast\).  Finally,
\[
\sqrt{4n^2+2n}<2n+3
\]
for \(n\ge1\), which gives \(a_\ast<3(n+1)\).
\end{proof}

\section{The inverse moment and a canonical-product principle}
\label{sec:first-lobe}

The first-lobe maximum is controlled by combining the exact Jacobi
inverse moment with a canonical-product argument that does not use the
differential equation.

\subsection{The inverse moment}

\begin{lemma}[Exact inverse moment]\label{lem:inverse-moment}
For \(n\ge1\), \(a>0\), and \(b\ge0\), the function \(H\) in
\eqref{eq:B-H-p} satisfies
\begin{equation}\label{eq:inverse-moment}
\int_0^B\frac{H(u)^2}{u}\,du=\frac1a.
\end{equation}
\end{lemma}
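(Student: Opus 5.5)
Undoing the substitution \(x=1-2u/B\) turns the statement into a weighted Jacobi integral. Since \(du=\tfrac B2\,dx\) and \(u=\tfrac B2(1-x)\), we get \(du/u=dx/(1-x)\). With \((\tfrac{1-x}{2})^{a}(\tfrac{1+x}{2})^{b}\) coming from \(H^2\), this gives
\[
\int_0^B\frac{H(u)^2}{u}\,du
=\frac{\G(n+1)\G(n+a+b+1)}{\G(n+a+1)\G(n+b+1)}\,2^{-a-b}\int_{-1}^1(1-x)^{a-1}(1+x)^{b}P_n^{(a,b)}(x)^2\,dx .
\]
The integral converges because \(a>0\). The task is therefore to evaluate the inverse moment
\[
J=\int_{-1}^1(1-x)^{a-1}(1+x)^bP_n^{(a,b)}(x)^2\,dx .
\]

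I would compute \(J\) with Rodrigues' formula, as the outline announces. Write one factor \(P_n^{(a,b)}\) through
\[
(1-x)^a(1+x)^bP_n^{(a,b)}(x)=\frac{(-1)^n}{2^nn!}\frac{d^n}{dx^n}\bigl[(1-x)^{n+a}(1+x)^{n+b}\bigr].
\]
Then
\[
J=\frac{(-1)^n}{2^nn!}\int_{-1}^1\frac{P_n^{(a,b)}(x)}{1-x}\,\frac{d^n}{dx^n}\bigl[(1-x)^{n+a}(1+x)^{n+b}\bigr]dx .
\]
The function \(P_n^{(a,b)}(x)/(1-x)\) is not a polynomial. Split it as
\[
\frac{P_n^{(a,b)}(x)}{1-x}=\frac{P_n^{(a,b)}(x)-P_n^{(a,b)}(1)}{1-x}+\frac{P_n^{(a,b)}(1)}{1-x}.
\]
The first piece is a polynomial of degree \(n-1\). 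By orthogonality, that is, by \(n\) integrations by parts with vanishing boundary terms since \(a>0\) and \(b\ge0\), it contributes nothing. Only the constant \(P_n^{(a,b)}(1)=\G(n+a+1)/(n!\,\G(a+1))\) survives, so
\[
J=P_n^{(a,b)}(1)\int_{-1}^1(1-x)^{a-1}(1+x)^bP_n^{(a,b)}(x)\,dx .
\]

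To evaluate the remaining integral, insert Rodrigues' formula once more. Applying \(\frac{d^n}{dx^n}\) to \((1-x)^{-1}\), or equivalently integrating by parts \(n\) times against \((1-x)^{-1}\), gives \(n!\,(1-x)^{-n-1}\) and hence
\[
\int_{-1}^1(1-x)^{a-1}(1+x)^bP_n^{(a,b)}(x)\,dx=\frac{1}{2^n}\int_{-1}^1(1-x)^{a-1}(1+x)^{n+b}\,dx .
\]
Boundary terms at \(x=1\) are of order \((1-x)^{a+k-(k+1)}\) times bounded factors; one should check they vanish for \(a>0\), approximating by \(1-\epsilon\) if necessary. The right side is a Beta integral, equal to
\[
2^{a+b}\,\frac{\G(a)\G(n+b+1)}{\G(n+a+b+1)}.
\]
This matches the known closed form
\[
\int_{-1}^1(1-x)^{a-1}(1+x)^b\bigl(P_n^{(a,b)}\bigr)^2dx=\frac{2^{a+b}\G(n+a+1)\G(n+b+1)}{a\,n!\,\G(n+a+b+1)}.
\]

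Multiplying by the normalization gives
\[
\frac{n!\,\G(n+a+b+1)}{\G(n+a+1)\G(n+b+1)}\,2^{-a-b}\cdot\frac{2^{a+b}\G(n+a+1)\G(n+b+1)}{a\,n!\,\G(n+a+b+1)}=\frac1a,
\]
which is the claimed value. The main obstacle is justifying the integrations by parts near the singular endpoint \(x=1\) when \(0<a<1\). I would handle it by integrating over \([-1,1-\epsilon]\), bounding each boundary term by \(O(\epsilon^{a})\), and letting \(\epsilon\to0\). As a consistency check, the case \(n=0\) gives
\[
2^{-a-b}\,\frac{\G(a+b+1)}{\G(a+1)\G(b+1)}\cdot\frac{2^{a+b}\G(a)\G(b+1)}{\G(a+b+1)}=\frac{\G(a)}{\G(a+1)}=\frac1a .
\]
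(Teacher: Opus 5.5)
Your proof is correct and follows essentially the paper's route. The paper also inserts Rodrigues' formula for one factor and uses $\mathcal P(t)/t=\mathcal P(0)/t+Q_{n-1}(t)$; it simply integrates by parts once against the whole quotient, so the $n$ derivatives annihilate $Q_{n-1}$ (your orthogonality step) and leave $\mathcal P(0)$ times the same Beta integral.

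One slip needs fixing. The $k$-th boundary term at $x=1$ pairs $k!\,(1-x)^{-k-1}$ with the derivative of order $n-1-k$ of $(1-x)^{n+a}(1+x)^{n+b}$, which is $O((1-x)^{a+1+k})$. The term is therefore $O((1-x)^{a})$, not $(1-x)^{a-1}$ as your exponent $a+k-(k+1)$ says; your later $O(\epsilon^{a})$ bound is the correct one.
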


\begin{proof}
Put
\[
\mathcal P(t)=P_n^{(a,b)}(1-2t).
\]
Rodrigues' formula in the \(t\)-coordinate is
\begin{equation}\label{eq:rodrigues-t}
\mathcal P(t)=\frac1{n!}t^{-a}(1-t)^{-b}
\frac{d^n}{dt^n}\left[t^{n+a}(1-t)^{n+b}\right].
\end{equation}
Let
\[
I=\int_0^1t^{a-1}(1-t)^b\mathcal P(t)^2\,dt,
\qquad
U(t)=\frac{\mathcal P(t)}t,\qquad
F(t)=t^{n+a}(1-t)^{n+b}.
\]
Insert \eqref{eq:rodrigues-t} for one copy of \(\mathcal P\), first
integrating on \([\varepsilon,1-\varepsilon]\).  For
\(0\le j\le n-1\), the boundary products satisfy
\begin{align}
U^{(j)}(t)F^{(n-1-j)}(t)&=O(t^a),
&&t\downarrow0, \label{eq:ibp-zero}\\
U^{(j)}(t)F^{(n-1-j)}(t)
&=O((1-t)^{b+1+j}),
&&t\uparrow1. \label{eq:ibp-one}
\end{align}
Thus all boundary terms vanish as \(\varepsilon\downarrow0\), including
when \(b=0\).  Since
\[
\frac{\mathcal P(t)}t
=\frac{\mathcal P(0)}t+Q_{n-1}(t),
\]
where \(Q_{n-1}\) is a polynomial of degree at most \(n-1\), we have
\[
U^{(n)}(t)=(-1)^n n!\,\mathcal P(0)t^{-n-1}.
\]
Consequently,
\begin{align*}
I
&=\frac{(-1)^n}{n!}\int_0^1U^{(n)}(t)F(t)\,dt\\
&=\mathcal P(0)B(a,n+b+1)\\
&=\frac{\G(n+a+1)\G(n+b+1)}
{a\,n!\,\G(n+a+b+1)}.
\end{align*}
Multiplying by the normalization in \eqref{eq:def-g} and changing
variables \(u=Bt\) give \eqref{eq:inverse-moment}.
\end{proof}

\subsection{The canonical-product principle}

The next theorem is independent of a differential equation.  It
isolates the real-zero product, the endpoint factor, and the singular
inverse moment; the countable case will also give a Bessel application.

\begin{theorem}[Canonical-product first-lobe principle]
\label{thm:canonical-product-lobe}
Let \(a>0\), \(\lambda\ge0\), and \(C>0\).  Consider one of the
following cases.

\smallskip
\noindent
\emph{Finite endpoint.}  There are \(B>0\), \(b\ge0\), \(N\ge1\),
and \(0<\zeta_1<\cdots<\zeta_N<B\) such that
\begin{equation}\label{eq:canonical-finite}
\Phi(u)=C u^{a/2}(B-u)^{b/2}\e^{-\lambda u/2}
\prod_{j=1}^{N}\left(1-\frac{u}{\zeta_j}\right),
\qquad 0<u<B.
\end{equation}

\smallskip
\noindent
\emph{Infinite endpoint.}  On \((0,\infty)\),
\begin{equation}\label{eq:canonical-infinite}
\Phi(u)=C u^{a/2}\e^{-\lambda u/2}
\prod_{j\in\mathcal J}\left(1-\frac{u}{\zeta_j}\right),
\end{equation}
where \(\mathcal J\) is finite or countable.  For a nonempty product,
\(0<\zeta_1<\zeta_2<\cdots\); in the countable case,
\(\sum_j\zeta_j^{-1}<\infty\).  The empty product is allowed if
\(\lambda>0\).

If the product is empty put \(\zeta_1=\infty\), and in either case
understand \(\Phi(0)=0\) by continuous extension.  Then \(\Phi\) has a
unique maximum \(A=\Phi(u_1)\) on \((0,\zeta_1)\), and
\begin{equation}\label{eq:canonical-envelope}
\frac{\Phi(tu_1)}{A}
\ge t^{a/2}\left(1+\frac a2(1-t)\right),
\qquad 0\le t\le1+\frac2a.
\end{equation}
Consequently,
\begin{equation}\label{eq:first-lobe-mass}
\int_0^{u_1}\frac{\Phi(u)^2}{u}\,du
\ge\frac{A^2}{a}R(a),
\qquad
R(a)=1+\frac{a}{a+1}
+\frac{a^2}{2(a+1)(a+2)}.
\end{equation}
Equivalently,
\begin{equation}\label{eq:R-def}
R(a)=\frac{5a^2+10a+4}{2(a+1)(a+2)}.
\end{equation}
Define also
\begin{equation}\label{eq:Rstar-def}
R_*(a)
:=a\int_0^{1+2/a}t^{a-1}
\left(1+\frac a2(1-t)\right)^2\,dt
=\frac{a+2}{2(a+1)}\left(\frac{a+2}{a}\right)^a.
\end{equation}
Then \(R_*(a)>R(a)\).  Put \(L=B\) in the finite-endpoint case and
\(L=\infty\) in the infinite-endpoint case.  If the full inverse moment
\[
I:=\int_0^L\frac{\Phi(u)^2}{u}\,du
\]
is finite, then
\begin{equation}\label{eq:canonical-peak-bound}
A^2<\frac{aI}{R_*(a)}.
\end{equation}
\end{theorem}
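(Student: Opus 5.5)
The plan is to factor off the hard-edge power and compare the rest with its tangent at the peak. Write \(\Phi(u)=C u^{a/2}e^{\psi(u)}\), where \(\psi\) collects \(\tfrac b2\log(B-u)\) (finite case only), \(-\lambda u/2\), and \(\sum_j\log(1-u/\zeta_j)\). On \((0,\zeta_1)\) each piece of \(\log\Phi\) is concave and \((\log u)''<0\). So, exactly as in \Cref{lem:first-lobe-geometry}, \((\log\Phi)'\) decreases strictly from \(+\infty\) at \(0\) to \(-\infty\) at \(\zeta_1\) (or to \(-\lambda/2<0\) at infinity in the empty-product case). This gives the unique maximum \(u_1\) and the key identity
\[
\frac{\lambda}{2}+\frac{b}{2(B-u_1)}+\sum_j\frac1{\zeta_j-u_1}=\frac{a}{2u_1}.
\]
In the countable case the sum converges by \(\sum\zeta_j^{-1}<\infty\), and all manipulations pass to the limit from finite truncations.

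Next I prove \eqref{eq:canonical-envelope}. Put \(x=(1-t)u_1\), \(y_j=x/(\zeta_j-u_1)\), \(y_B=x/(B-u_1)\) and \(y_\lambda=\lambda x/2\), so that \(\sum_j y_j+\tfrac b2y_B+y_\lambda=\tfrac a2(1-t)\). The crucial observation is that each zero factor is exactly linear in \(t\):
\[
\frac{\zeta_j-tu_1}{\zeta_j-u_1}=1+y_j .
\]
Hence
\[
\frac{\Phi(tu_1)}{A\,t^{a/2}}=e^{y_\lambda}(1+y_B)^{b/2}\prod_j(1+y_j),
\]
and it suffices to show this is \(\ge 1+y_\lambda+\tfrac b2y_B+\sum_jy_j\).

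For \(t\le1\) all \(y\ge0\), and the bound follows from \(e^y\ge1+y\) together with \(\prod(1+y_j)\ge1+\sum y_j\). For \(1<t\le1+2/a\) all \(y\le0\), and \(|\sum y|\le\tfrac a2(t-1)\le1\). Each \(|y_j|\le1\), so the factors are nonnegative, which incidentally keeps \(tu_1\) in the first lobe. The Weierstrass product inequality \(\prod(1-|y_i|)\ge1-\sum|y_i|\) then applies.

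The main obstacle is the endpoint factor \((1+y_B)^{b/2}\) when \(0<b<2\). Bernoulli's inequality runs the wrong way there, and I expect a genuine counterexample without zeros. This is why the finite case assumes \(N\ge1\): then \(\zeta_N<B\) gives \(|y_B|\le|y_N|\). I will absorb the \(B\)-factor into the \(\zeta_N\)-factor using
\[
(1+y)^c\ge1+\frac{cy}{1+y},
\]
which follows from \(\log(1+y)\ge y/(1+y)\) and \(e^z\ge1+z\). This yields
\[
(1+y_B)^c(1+y_N)\ge1+y_N+c\,y_B\frac{1+y_N}{1+y_B}\ge1+y_N+cy_B
\]
in both sign regimes. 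For \(b\ge2\) Bernoulli applies directly.

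Finally, substitute \(u=tu_1\). Then \(\int_0^{u_1}\Phi^2/u\,du=A^2\int_0^1(\Phi(tu_1)/A)^2\,dt/t\), and inserting the envelope and expanding \((1+\tfrac a2(1-t))^2\) against \(t^{a-1}\) gives exactly \(A^2R(a)/a\), by elementary beta integrals. Extending the same computation to \(t\le1+2/a\) gives \(R_*(a)/a\); this uses \((1+2/a)^a\) and the vanishing of the integrand's second factor at the upper limit. Moreover \(R_*>R\) because the extra piece over \((1,1+2/a)\) is positive. Since the integrand is nonnegative, \(I\ge\int_0^{(1+2/a)u_1}\Phi^2/u\,du\ge A^2R_*(a)/a\). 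Strictness in \eqref{eq:canonical-peak-bound} comes from strictness of \(e^y>1+y\) or of the product inequality at some \(t\ne1\), or from the positive tail beyond \((1+2/a)u_1\).
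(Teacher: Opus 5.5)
Your proof is correct, but it proves the central step, the envelope \eqref{eq:canonical-envelope}, by a different mechanism from the paper's.

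The paper first collapses all zero factors into a single linear factor: $\prod_j(1+r_jx)\ge1+cx$ before the peak, and $\prod_j(1-r_jy)\ge1-cy$ after it. It then shows by second-derivative computations that the aggregated profiles $\e^{dx}(1+qx)^p(1+cx)$ and $\e^{-dy}(1-qy)^p(1-cy)$ are convex. This uses $c>q$, which comes from $\zeta_j<B$. On the decreasing side it also needs $y\le2/a$ and a polynomial positivity check for $0<p<1$. The tangent line at the peak, with slope $a/2$ from criticality, then gives the envelope.

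You instead keep the factors separate. The exact linearity of each zero factor in $t$ reduces everything to showing that the product is at least one plus the sum of its first-order terms. You handle the only concave factor, $(1+y_B)^{b/2}$, by pairing it with a single zero factor through $(1+y)^c\ge1+cy/(1+y)$ and $|y_B|\le|y_N|$. The sign check $c\,y_B(1+y_N)/(1+y_B)\ge c\,y_B$ works on both sides of the peak. For $t>1$, the bound $\sum|y|\le1$ keeps every lower bound nonnegative for the Weierstrass step. So one elementary argument covers all of $0\le t\le1+2/a$ without any calculus. The pairing inequality holds for every $c>0$, so your separate Bernoulli case $b\ge2$ is not needed.

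What each route buys:
\begin{itemize}
\item The paper's route gives the slightly stronger structural fact that the normalized profile is convex on each side of the peak. Only its tangent inequality is ever used, however, so your version is the more economical proof.
\item Your route still supports the equality analysis of \Cref{prop:canonical-sharpness}. For $0<t<1$, the inequality $\e^y\ge1+y$ is strict when $\lambda>0$, your pairing inequality is strict when $b>0$, and the two-factor product inequality is strict when a second zero is present.
\end{itemize}

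One thing to tighten is the strictness in \eqref{eq:canonical-peak-bound}. Only the tail term works in general: for $b=\lambda=0$ and a single zero, the envelope is an identity on $[0,1+2/a]$, so your other two proposed sources of strictness give nothing there. Commit to the tail argument, using $(1+2/a)u_1\le\zeta_1<L$ and the fact that $\Phi$ has only isolated zeros beyond that point.
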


\begin{proposition}[Sharpness of the canonical-product constants]
\label{prop:canonical-sharpness}
Under the hypotheses of \Cref{thm:canonical-product-lobe},
the constant in \eqref{eq:canonical-envelope} is pointwise best
possible, and \(R(a)\) is the largest universal constant in
\eqref{eq:first-lobe-mass}.  Equality in \eqref{eq:first-lobe-mass}
holds precisely when \(\lambda=0\), the product has one factor, and,
in the finite-endpoint case, \(b=0\).  The constant \(R_*(a)\) in
\eqref{eq:canonical-peak-bound} is also best possible over the stated
class, although equality in \eqref{eq:canonical-peak-bound} is never
attained.  Here optimality is for the union of the finite- and
infinite-endpoint cases; the extremizing sequence already lies in the
finite-endpoint degree-one family.
No optimality assertion is made here for the infinite-endpoint
subclass considered by itself.
\end{proposition}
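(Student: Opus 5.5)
We construct explicit extremal functions inside the finite-endpoint family with a single zero and $b=0$, $\lambda=0$:
\[
\Phi_\zeta(u)=u^{a/2}\Bigl(1-\frac u\zeta\Bigr),\qquad 0<u<B,\quad \zeta<B .
\]
First we locate the peak. The logarithmic derivative $a/(2u)-1/(\zeta-u)$ vanishes at $u_1=a\zeta/(a+2)$. Hence
\[
\frac{\Phi_\zeta(tu_1)}{A}=t^{a/2}\,\frac{1-tu_1/\zeta}{1-u_1/\zeta}=t^{a/2}\Bigl(1+\frac a2(1-t)\Bigr),
\]
so \eqref{eq:canonical-envelope} holds with equality for every admissible $t$. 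This gives pointwise sharpness. It also gives equality in \eqref{eq:first-lobe-mass}, since integrating the squared envelope against $dt/t$ over $[0,1]$ produces exactly $R(a)/a$.

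For the converse (the equality cases), we reread the proof of \eqref{eq:canonical-envelope}. There the ratio $\Phi(tu_1)/A$ is written as $t^{a/2}$ times a product of factors. Each factor is compared with its tangent-line bound, using $(\log)'$ at $u_1$ together with concavity or convexity of each factor's logarithm. The step $\log(1-x)\le -x$ and the analogous inequality for $(B-u)^{b/2}$ and for $e^{-\lambda u/2}$ are strict on $0<t<1$ unless the following all hold:
\begin{itemize}
\item $\lambda=0$;
\item $b=0$;
\item exactly one $\zeta_j$ is present, so that its factor is exactly linear.
\end{itemize}
We expect the bookkeeping here to be the delicate part. One must check that $\lambda$ and $b$ enter the critical-point equation and so shift $u_1$, yet still yield strict inequality on a set of positive measure in $(0,1)$. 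This forces strictness in the integrated bound \eqref{eq:first-lobe-mass} unless we are in the single-factor case.

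For $R_*$, the bound \eqref{eq:canonical-peak-bound} comes from $I\ge\int_0^{\min(\zeta_1,(1+2/a)u_1)}\Phi^2/u$ together with the envelope on $[0,1+2/a]$. Because $\Phi^2/u>0$ off the zeros, the inequality is strict: there is always extra mass beyond the first lobe in the finite case, or the envelope is strictly exceeded. Hence equality is never attained. To show $R_*$ is nonetheless optimal, we take $\Phi_\zeta$ on $(0,B)$ with $B=\zeta(1+\varepsilon)$ and $\varepsilon\downarrow0$. Since $(1+2/a)u_1=\zeta$, the envelope integral over $[0,1+2/a]$ is exactly the integral of $\Phi^2/u$ over $(0,\zeta)$, which equals $A^2R_*(a)/a$. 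The remaining mass $\int_\zeta^{B}\Phi^2/u$ is $O(\varepsilon^3)$ relative to $A^2$, so $aI/A^2\to R_*(a)$.

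This extremizing sequence lies in the finite-endpoint degree-one family, as the statement claims. The main thing to verify is that the construction respects the constraint $\zeta_N<B$; it does, since $\zeta<B$ strictly for each $\varepsilon>0$.
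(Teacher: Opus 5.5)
Your extremal family $\Phi_\zeta(u)=u^{a/2}(1-u/\zeta)$ with $b=\lambda=0$, the identities $u_1=a\zeta/(a+2)$ and $(1+2/a)u_1=\zeta$, and the extremizing sequence $B=\zeta(1+\varepsilon)$ with an $O(\varepsilon^3)$ tail are exactly what the paper uses (it normalizes $\zeta=1$). So the pointwise sharpness of \eqref{eq:canonical-envelope}, the optimality of $R(a)$ and $R_*(a)$, and non-attainment are all fine.

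The gap is the ``only if'' half of the equality characterization. You leave it as bookkeeping, and you rest it on a per-factor tangent mechanism that does not actually prove the envelope. Write $x=1-t$, $p=b/2$, $q=u_1/(B-u_1)$, $d=\lambda u_1/2$, $r_j=u_1/(\zeta_j-u_1)$, $c=\sum_jr_j$, so that
$\Phi(tu_1)/A=t^{a/2}(1+qx)^p\e^{dx}\prod_j(1+r_jx)$.
For $0<b<2$ the factor $(1+qx)^p$ is concave, so it lies \emph{below} $1+pqx$. There is therefore no per-factor tangent lower bound whose strictness you could invoke. The envelope comes only after two steps: first replace $\prod_j(1+r_jx)$ by $1+cx$, then prove convexity of the combined function $h(x)=\e^{dx}(1+qx)^p(1+cx)$. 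For $0<p<1$ that convexity uses $c>q$, which holds because every $\zeta_j<B$. Your worry that $\lambda$ and $b$ shift $u_1$ is moot: the criticality identity $a/2=pq+d+c$ gives $h(0)=1$ and $h'(0)=a/2$ in every case.

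Strictness then has exactly three sources, and each must be checked.
\begin{itemize}
\item With two or more zeros, $\prod_j(1+r_jx)\ge1+cx+r_1r_2x^2$ for $x\ge0$ (pass to the limit in the countable case). So the product step is strict for $0<t<1$.
\item If $\lambda>0$, then $d>0$. With $f=(1+qx)^p(1+cx)$ and $f,f',f''\ge0$, one gets $h''\ge d^2\e^{dx}f>0$.
\item If $b>0$, then $f''(x)=pq(1+qx)^{p-2}\bigl(2c+(p-1)q+cq(p+1)x\bigr)>0$ on $[0,1]$, since $2c+(p-1)q>(p+1)q$. Hence $h$ is strictly convex there.
\end{itemize}
So unless $\lambda=0$, $b=0$ and there is a single zero, at least one of the inequalities
$\Phi(tu_1)/A\ge t^{a/2}h(1-t)\ge t^{a/2}\bigl(1+\tfrac a2(1-t)\bigr)$
is strict for $0<t<1$. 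Both sides are continuous and nonnegative, so squaring and integrating against $t^{-1}\,dt$ makes \eqref{eq:first-lobe-mass} strict. In the infinite-endpoint case $p=q=0$, and for $\lambda=0$ the product is nonempty. This yields precisely the stated equality cases and closes your argument.
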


\begin{proof}[Proof of \Cref{thm:canonical-product-lobe}]
In the countable case, on every compact subinterval of
\([0,\zeta_1)\), the product and the first two logarithmic derivatives
converge uniformly.  This follows from \(\sum_j\zeta_j^{-1}<\infty\)
and the bound
\[
\frac1{\zeta_j-u}
\le\frac1{1-\rho/\zeta_1}\frac1{\zeta_j},
\qquad 0\le u\le\rho<\zeta_1.
\]
Moreover,
\[
\left|\log\left(1-\frac u{\zeta_j}\right)\right|
\le \frac{\rho}{1-\rho/\zeta_1}\frac1{\zeta_j},
\qquad
\frac1{(\zeta_j-u)^2}
\le\frac1{(1-\rho/\zeta_1)^2}\frac1{\zeta_j^2}.
\]
Since \(\sum_j\zeta_j^{-2}<\infty\) as well, all following
differentiations are valid also for a countable product.

On the first lobe,
\[
(\log\Phi)'(u)=\frac{a}{2u}-\frac{b}{2(B-u)}
-\frac\lambda2-\sum_j\frac1{\zeta_j-u},
\]
with the \(b\)-term omitted at the infinite endpoint, and
\[
(\log\Phi)''(u)
=-\frac{a}{2u^2}-\frac{b}{2(B-u)^2}
-\sum_j\frac1{(\zeta_j-u)^2}<0.
\]
The logarithmic derivative tends to \(+\infty\) at zero and to
\(-\infty\) at the first zero; for an empty product it tends to
\(-\lambda/2<0\) at infinity.  Hence \(u_1\) exists and is unique.

Put
\[
x=1-t,\quad d=\frac{\lambda u_1}{2},\quad
r_j=\frac{u_1}{\zeta_j-u_1},\quad c=\sum_jr_j.
\]
At a finite endpoint also put
\(p=b/2\) and \(q=u_1/(B-u_1)\); otherwise put \(p=q=0\).
The critical-point identity is
\begin{equation}\label{eq:canonical-criticality}
\frac a2=pq+d+c.
\end{equation}
The sum defining \(c\) converges in the countable case, and
\begin{align}
\frac{\Phi(tu_1)}A
&=t^{a/2}(1+qx)^p\e^{dx}\prod_j(1+r_jx)\notag\\
&\ge t^{a/2}(1+qx)^p\e^{dx}(1+cx).
\label{eq:canonical-ratio}
\end{align}
If \(p>0\), every \(\zeta_j<B\) gives \(r_j>q\), hence \(c>q\).
For \(f(x)=(1+qx)^p(1+cx)\), direct differentiation gives
\[
f''(x)=pq(1+qx)^{p-2}
\left(2c+(p-1)q+cq(p+1)x\right)\ge0.
\]
For \(0<p<1\), the only non-immediate case, the constant factor is
larger than \(2q-(1-p)q=(1+p)q\).  Thus \(f,f',f''\ge0\), and
\(h(x)=\e^{dx}f(x)\) is convex.  By
\eqref{eq:canonical-criticality},
\[
h(0)=1,\qquad h'(0)=d+pq+c=\frac a2.
\]
Its tangent inequality in \eqref{eq:canonical-ratio} proves
\eqref{eq:canonical-envelope} for \(0\le t\le1\).

It remains to continue the same envelope down the decreasing side of
the first lobe.  Put \(S=a/2\), write \(t=1+y\), and suppose first that
the product is nonempty.  Since
\[
r_1\le c\le S,
\]
the interval \(0\le y\le S^{-1}\) lies before the first zero; at a
finite endpoint it also lies before \(B\), because \(q<r_1\).  All
factors below are therefore nonnegative, and finite partial products,
followed by passage to the limit when necessary, give
\begin{align}
\frac{\Phi((1+y)u_1)}{A(1+y)^S}
&=(1-qy)^p\e^{-dy}\prod_j(1-r_jy)\notag\\
&\ge \e^{-dy}(1-qy)^p(1-cy).
\label{eq:canonical-right-ratio}
\end{align}
Set \(f(y)=(1-qy)^p(1-cy)\).  If \(p=0\), this function is linear.
If \(p>0\), write \(c=qz\), where \(z>1\).  A calculation gives
\[
f''(y)=pq(1-qy)^{p-2}
\left(2c+(p-1)q-cq(p+1)y\right).
\]
Since \(S\ge q(p+z)\) and \(y\le S^{-1}\), the expression in
parentheses is bounded below by
\[
\frac{q}{p+z}
\left(p^2-p+2pz+2z^2-2z\right)>0.
\]
Indeed, the polynomial in parentheses equals \(p^2+p\) at \(z=1\)
and is strictly increasing for \(z\ge1\).  Thus in all cases
\(f\ge0\), \(f'\le0\), and \(f''\ge0\).  Hence
\[
h(y)=\e^{-dy}f(y),\qquad
h''(y)=\e^{-dy}\bigl(f''-2df'+d^2f\bigr)\ge0.
\]
By \eqref{eq:canonical-criticality}, \(h(0)=1\) and
\(h'(0)=-S\).  The tangent inequality for \(h\), inserted in
\eqref{eq:canonical-right-ratio}, proves
\eqref{eq:canonical-envelope} for \(1\le t\le1+2/a\).
For an empty product, criticality gives \(d=S\), and the same conclusion
is simply \(\e^{-Sy}\ge1-Sy\).

Squaring and putting \(u=tu_1\) first gives
\begin{align*}
\int_0^{u_1}\frac{\Phi(u)^2}{u}\,du
&\ge A^2\int_0^1t^{a-1}
\left(1+\frac a2(1-t)\right)^2dt
=\frac{A^2}{a}R(a).
\end{align*}
Integration over the full range of \eqref{eq:canonical-envelope} gives
\[
I\ge \frac{A^2}{a}R_*(a).
\]
The endpoint of the domain lies strictly beyond
\((1+2/a)u_1\), and \(\Phi^2\) has positive mass on a set of positive
measure beyond that point.  This proves the strict inequality
\eqref{eq:canonical-peak-bound}.  Formula \eqref{eq:Rstar-def} follows
by elementary integration, and
\[
R_*(a)-R(a)
=a\int_1^{1+2/a}t^{a-1}
\left(1+\frac a2(1-t)\right)^2\,dt>0.
\]
\end{proof}

\begin{proof}[Proof of \Cref{prop:canonical-sharpness}]
Use the notation introduced in the preceding proof.  If the product has at least
two factors, then for every finite partial product containing its first
two factors,
\[
\prod_{j=1}^{N}(1+r_jx)
\ge 1+x\sum_{j=1}^{N}r_j+x^2r_1r_2.
\]
Passing to the limit in the countable case leaves a positive quadratic
cross term for \(x>0\).  The exponential tangent comparison is strict
when \(d>0\), and the finite-endpoint tangent comparison is strict when
\(p>0\).  It follows that equality in \eqref{eq:first-lobe-mass} has
exactly the stated one-factor form.  That form makes
\eqref{eq:canonical-envelope} an identity and proves pointwise
sharpness as well.

Finally, for \(\varepsilon>0\), take the finite-endpoint example
\[
\Phi_\varepsilon(u)=C_\varepsilon u^{a/2}(1-u),
\qquad 0<u<1+\varepsilon,
\]
with \(b=\lambda=0\) and sole zero \(\zeta_1=1\).  Its maximum occurs
at \(u_1=a/(a+2)\), and its normalized profile agrees with the
right-hand side of \eqref{eq:canonical-envelope} up to its first zero.
Consequently,
\[
\frac{a}{A_\varepsilon^2}
\int_0^{1+\varepsilon}\frac{\Phi_\varepsilon(u)^2}{u}\,du
\longrightarrow R_*(a)
\qquad(\varepsilon\downarrow0).
\]
Thus \(R_*(a)\) cannot be increased, although the preceding strictness
argument shows that it is not attained in the stated class.
\end{proof}

\subsection{Jacobi consequences}

\begin{corollary}[First Jacobi peak]\label{cor:first-peak}
Let \(A\) be the first-lobe maximum of \(H\).  Then
\begin{equation}\label{eq:first-peak}
A^2<\frac1{R_*(a)}.
\end{equation}
\end{corollary}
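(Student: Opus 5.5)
The plan is to apply \Cref{thm:canonical-product-lobe} in its finite-endpoint form to \(\Phi=H\) and then insert the exact inverse moment of \Cref{lem:inverse-moment}. We work with the standing assumptions of \Cref{sec:sonin}: \(n\ge1\), \(a>0\), \(b\ge0\), and \(B=n+a+b+1\).

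First we put \(H\) into the canonical form \eqref{eq:canonical-finite}. \Cref{lem:first-lobe-geometry} gives, on \((0,\zeta_1)\),
\[
H(u)=K u^{a/2}(B-u)^{b/2}\prod_{j=1}^n(\zeta_j-u).
\]
Factoring \(\zeta_j\) out of each linear factor, this equals
\[
C u^{a/2}(B-u)^{b/2}\prod_{j=1}^n\left(1-\frac u{\zeta_j}\right),
\qquad C=K\prod_j\zeta_j>0.
\]
The same identity holds on all of \((0,B)\), because both sides are the continuous function \(g_n^{(a,b)}(1-2u/B)\). Indeed, the weighted Jacobi polynomial factors exactly this way: the polynomial \(P_n^{(a,b)}(1-2u/B)\) has degree \(n\) in \(u\), its zeros are the \(\zeta_j\in(0,B)\), and the weight \((\tfrac{1-x}{2})^{a/2}(\tfrac{1+x}{2})^{b/2}\) becomes \((u/B)^{a/2}((B-u)/B)^{b/2}\). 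Hence \(H\) matches \eqref{eq:canonical-finite} with \(\lambda=0\), \(N=n\ge1\), and the endpoint \(B\). The first-lobe maximum \(A\) of \(H\) is then exactly the quantity \(A\) of \Cref{thm:canonical-product-lobe}, by the uniqueness of the critical point on \((0,\zeta_1)\).

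Next we check that the full inverse moment is finite. \Cref{lem:inverse-moment} gives
\[
I=\int_0^B\frac{H(u)^2}{u}\,du=\frac1a<\infty.
\]
Finiteness is also visible directly: \(H^2/u=O(u^{a-1})\) near \(0\) with \(a>0\), and \(H\) is bounded near \(B\). Applying \eqref{eq:canonical-peak-bound} then gives
\[
A^2<\frac{aI}{R_*(a)}=\frac{1}{R_*(a)},
\]
which is \eqref{eq:first-peak}.

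The only point needing care is the strictness hypothesis inside the theorem: the endpoint \(B\) must lie strictly beyond \((1+2/a)u_1\), with positive mass of \(\Phi^2\) there. This is already built into the theorem's proof, since \(\zeta_1\le(1+2/a)u_1\) follows from \(r_1\le a/2\), and \(\zeta_1<B\) holds with \(H^2>0\) on the lobes beyond \(\zeta_1\). So no further work is needed beyond the identification of \(H\) with the canonical form in the first step.
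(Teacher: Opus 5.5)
Your proof is correct and is essentially the paper's own: you write \(H\) on all of \((0,B)\) in the finite-endpoint canonical form (with \(\lambda=0\), \(N=n\)), then insert \(I=1/a\) from \Cref{lem:inverse-moment} into \eqref{eq:canonical-peak-bound}. One harmless slip in your last paragraph: \(r_1=u_1/(\zeta_1-u_1)\le a/2\) gives \(\zeta_1\ge(1+2/a)u_1\), not \(\le\), and together with \(\zeta_1<B\) this is what puts positive mass of \(H^2\) beyond \((1+2/a)u_1\); in any case strictness is a conclusion of the theorem, not a hypothesis you need to check.
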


\begin{proof}
If \(0<\zeta_1<\cdots<\zeta_n<B\) are the transformed Jacobi zeros,
then before \(\zeta_1\)
\[
H(u)=C\,u^{a/2}(B-u)^{b/2}
\prod_{j=1}^n(\zeta_j-u)
\]
with \(C>0\).  Apply \Cref{thm:canonical-product-lobe} and bound its left-hand
side above by the full integral in \Cref{lem:inverse-moment}.
\end{proof}

\begin{proposition}[Two-sided lobe monotonicity]
\label{prop:two-sided-lobes}
Assume \(n\ge1\) and \(a,b>0\).  Put \(\zeta_0=0\) and
\(\zeta_{n+1}=B\), and let \(u^{(j)}\) be the unique maximum point of
\(|H|\) on \((\zeta_j,\zeta_{j+1})\), with
\(\mu_j=|H(u^{(j)})|\), \(0\le j\le n\).  If \(u_v\) is the vertex
of the concave quadratic \(P\) in \eqref{eq:P-quadratic}, then
\begin{align}
\mu_j&\ge\mu_{j+1}
&&\text{whenever }u^{(j+1)}\le u_v,\label{eq:left-lobe-monotonicity}\\
\mu_j&\le\mu_{j+1}
&&\text{whenever }u^{(j)}\ge u_v.\label{eq:right-lobe-monotonicity}
\end{align}
In particular,
\begin{equation}\label{eq:extreme-lobe-identity}
\max_{0\le u\le B}|H(u)|=\max\{\mu_0,\mu_n\}.
\end{equation}
\end{proposition}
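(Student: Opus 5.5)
The proof rests on the Sonin function of \Cref{lem:sonin} and the classical Sonin--Pólya idea that lobe maxima are ordered by the sign of \(P'\). In terms of \(P=pQ\), the function reads
\[
\mathcal S(u)=H(u)^2+\frac{p(u)^2H'(u)^2}{P(u)}.
\]
At an interior lobe maximum \(u^{(j)}\) we have \(H'=0\), so \(\mathcal S(u^{(j)})=\mu_j^2\), provided \(P(u^{(j)})>0\).

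The first step is to check that \(P>0\) at every interior critical point. At a critical point of \(H\), the equation \((pH')'=-QH\) gives \(pH''=-QH\). At a nonzero extremum of \(|H|\) we need \(HH''\le0\), hence \(Q\ge0\). Equality would force \(H''=0\), and uniqueness of solutions to the ODE at a regular point would then make \(H\) identically zero or degenerate. So \(Q>0\), and therefore \(P>0\), at each \(u^{(j)}\); this includes \(j=0\) and \(j=n\), since \(a,b>0\) makes \(H\) vanish at both ends. Because \(P\) is concave with \(P(0)=-a^2/4<0\) and \(P(B)<0\), the set \(\{P>0\}\) is a single open interval \((u_-,u_+)\) containing the vertex \(u_v\). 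I would verify \(P(B)<0\) by direct substitution into \eqref{eq:P-quadratic}, or more simply from the endpoint behaviour \(Q\sim -b^2/(4(B-u))\cdot\)const. Consequently the whole interval \([u^{(j)},u^{(j+1)}]\) lies in the region where \(P>0\), so \(Q>0\) and \(\mathcal S\) is defined and differentiable on it.

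For \eqref{eq:left-lobe-monotonicity}: if \(u^{(j+1)}\le u_v\), then \(P'\ge0\) on \([u^{(j)},u^{(j+1)}]\), with strictness except possibly at one point. By \eqref{eq:sonin-derivative}, \(\mathcal S\) is nonincreasing there. Hence \(\mu_{j+1}^2=\mathcal S(u^{(j+1)})\le\mathcal S(u^{(j)})=\mu_j^2\). Symmetrically, if \(u^{(j)}\ge u_v\), then \(P'\le0\), \(\mathcal S\) is nondecreasing, and \(\mu_j\le\mu_{j+1}\). The uniqueness of the maximum point on each nodal interval is needed for \(u^{(j)}\) to be well defined. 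For the first lobe this is \Cref{lem:first-lobe-geometry}. For the other lobes I would use the same log-concavity computation: on \((\zeta_j,\zeta_{j+1})\), \((\log|H|)''\) is a sum of negative terms, since \(|H|\) is a constant times \(u^{a/2}(B-u)^{b/2}\prod|\zeta_i-u|\).

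For \eqref{eq:extreme-lobe-identity}, let \(k\) be the largest index with \(u^{(k)}\le u_v\), or \(k=-1\) if there is none. Applying \eqref{eq:left-lobe-monotonicity} repeatedly gives \(\mu_0\ge\mu_1\ge\cdots\ge\mu_k\). Applying \eqref{eq:right-lobe-monotonicity} repeatedly gives \(\mu_{k+1}\le\cdots\le\mu_n\). The pair \(k,k+1\) is not covered by either inequality, but that does not matter: every \(\mu_j\) is bounded by \(\mu_0\) or by \(\mu_n\). Since \(|H|\) vanishes at the zeros and at the endpoints, its maximum over \([0,B]\) is \(\max_j\mu_j=\max\{\mu_0,\mu_n\}\).

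The main obstacle is the positivity step: making sure \(Q>0\) on the entire segment between consecutive maximum points, including through the zero \(\zeta_{j+1}\), where \(\mathcal S\) must remain valid. This follows from the concavity of \(P\) together with \(P>0\) at both endpoints of the segment, so that is the argument I would write out carefully.
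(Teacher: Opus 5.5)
Your proposal is correct and follows essentially the paper's route: log-concavity of $|H|$ on each nodal interval, $Q>0$ at every lobe maximum, concavity of $P$ making $\{P>0\}$ an interval containing $u_v$, and monotonicity of the Sonin function on each side of the vertex. (You get nonemptiness of $\{P>0\}$ directly from the lobe maxima, whereas the paper adds a redundant first-zero argument.) One repair is needed. Strict positivity $Q(u^{(j)})>0$ does not follow from ODE uniqueness, because $H'=H''=0$ with $H\ne0$ violates no uniqueness theorem; instead use the strict log-concavity you already invoke, which gives $H''/H=(\log|H|)''<0$ at a critical point, exactly as the paper does.
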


\begin{proof}
On every nodal interval, the real-zero factorization gives
\[
(\log|H|)''(u)
=-\frac{a}{2u^2}-\frac{b}{2(B-u)^2}
-\sum_{k=1}^n\frac1{(u-\zeta_k)^2}<0.
\]
Thus each \(u^{(j)}\) exists and is unique.  Moreover, at such a
point \(H'=0\) and \(HH''<0\).  Since
\(pH''=-QH\) there, every lobe maximum lies in the open set
\(\{Q>0\}=\{P>0\}\).

For clarity, this set is nonempty.  Indeed,
\begin{align*}
P(0)&=-\frac{a^2}{4},& P(B)&=-\frac{b^2}{4},\\
P'(0)&=\nu+\frac{ra}{2}>0,&
P'(B)&=-\nu-\frac{rb}{2}<0.
\end{align*}
Hence \(u_v\in(0,B)\).  If \(P(u_v)\le0\), then \(Q\le0\) on
\((0,B)\).  Starting from \(H(u)\sim C u^{a/2}>0\) and
\(pH'\to0\) at zero, the first-zero argument used in
\Cref{prop:first-lobe-reduction} would make \(H\) positive and
increasing up to its first zero, a contradiction.  Therefore
\(P(u_v)>0\), and \(\{P>0\}=(u_-,u_+)\) for two simple roots
\(0<u_-<u_v<u_+<B\).

The Sonin function of \Cref{lem:sonin} is consequently defined at and
between all lobe maxima.  It is nonincreasing on \((u_-,u_v)\) and
nondecreasing on \((u_v,u_+)\), because \(P'\) is positive to the
left of the vertex and negative to the right.  At a lobe maximum,
\(\mathcal S(u^{(j)})=\mu_j^2\).  Comparing consecutive maxima on
each side of \(u_v\) proves
\eqref{eq:left-lobe-monotonicity}--\eqref{eq:right-lobe-monotonicity}.
Every maximum on the left is bounded by \(\mu_0\), and every maximum
on the right by \(\mu_n\), which proves
\eqref{eq:extreme-lobe-identity}.
\end{proof}

\section{Completion for positive degree}\label{sec:completion}

It remains to compare the universal first-lobe bound with the
parameter-dependent KKT factor and then assemble the endpoint caps with
the central interval.

\begin{lemma}[Comparison with the target]\label{lem:target-comparison}
Suppose \(P(u_\sigma)>0\).  Then the first-lobe maximum satisfies
\[
A<T_{n,a,b}.
\]
\end{lemma}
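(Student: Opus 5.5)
By \Cref{cor:first-peak} we already have \(A^2<1/R_*(a)\).  It therefore suffices to prove
\[
\frac1{R_*(a)}\le T_{n,a,b}^2
=\left[\frac{(n+1)(n+a+b+1)}{(n+a+1)(n+b+1)}\right]^{1/2},
\]
and we only need this in the overflow regime.  By \Cref{lem:overflow}, that regime forces \(a<n+\sqrt{4n^2+2n}<3(n+1)\).  The plan is to reduce everything to an inequality in \(a\) and \(n\) alone, and then to a rational inequality.

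\textbf{Step 1: remove \(b\).}  Since
\[
\frac{n+a+b+1}{n+b+1}=1+\frac{a}{n+b+1}
\]
is decreasing in \(b\) and tends to \(1\), we have
\[
T_{n,a,b}^4\ge\frac{n+1}{n+a+1}.
\]
This is the \(b\to\infty\) limit.  The overflow condition restricts only \(a\), uniformly in \(b\), so it costs nothing to drop \(b\) here.  It then suffices to show
\[
R_*(a)^4\ge\frac{n+a+1}{n+1}=1+\frac{a}{n+1}.
\]

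\textbf{Step 2: remove \(n\).}  From \Cref{lem:overflow}, \(n+1>a/3\), so \(1+a/(n+1)<4\).  The bound \(R_*(a)^4\ge4\) would close the argument, but it fails for small \(a\), since \(R_*(0)=1\).  So for small \(a\) I will keep the dependence on \(n\).

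Since \(n\ge1\), we have \(1+a/(n+1)\le1+a/2\).  For larger \(a\) we use the bound \(1+a/(n+1)<4\).  This leaves two parameter-only inequalities:
\[
R_*(a)^4\ge1+\frac a2 \quad\text{for small } a,
\qquad
R_*(a)^4\ge4 \quad\text{for large } a.
\]
For the large-\(a\) case, write
\[
R_*(a)=\frac{a+2}{2(a+1)}\Bigl(1+\frac2a\Bigr)^a.
\]
The factor \((1+2/a)^a\) increases to \(\e^2\), and one checks that \(R_*\) increases from \(1\) toward \(\e^2/2\approx3.69\).  Hence \(R_*(a)^4\ge4\) once \(R_*(a)\ge\sqrt2\), which already holds around \(a\approx1\).

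For small \(a\), use \(R_*(a)>R(a)=\dfrac{5a^2+10a+4}{2(a+1)(a+2)}\).  The claim \(R(a)^4\ge1+a/2\) is a polynomial inequality.  Near \(0\), \(R(a)=1+a/2+O(a^2)\), so \(R^4\approx1+2a\), which comfortably exceeds \(1+a/2\).

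\textbf{Main obstacle.}  The hard part is to patch the two ranges with explicit thresholds so that the rational checks really overlap.  I would take the rational bound \(R(a)^4\ge1+a/2\) on \(0<a\le a_0\) and the bound \(R_*(a)\ge\sqrt2\) for \(a\ge a_0\), with \(a_0\) near \(1\).  Monotonicity of \(R_*\) follows from
\[
\frac{d}{da}\log R_*=\log\Bigl(1+\frac2a\Bigr)-\frac{2}{a+2}+\frac1{a+2}-\frac1{a+1},
\]
whose positivity reduces to \(\log(1+x)\ge\dfrac{x}{1+x}\)-type estimates.

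If the crude replacement \(n+1>a/3\) turns out too lossy, I would fall back on the sharper overflow bound \(a<n+\sqrt{4n^2+2n}\), i.e. \(a/(n+1)<1+\sqrt3\).  That gives the target \(R_*^4>2+\sqrt3\approx3.73\), which is still easily met for moderate \(a\).  Strictness \(A<T\) is inherited from the strict inequality in \Cref{cor:first-peak}.
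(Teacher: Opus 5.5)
Your overall architecture matches the paper's: you drop $b$ (the paper writes $T_{n,a,b}^{-4}=1+z$ with $z=ab/((n+1)(n+1+a+b))<a/(n+1)$, which is your Step~1), then use $n\ge1$ for small $a$ and the overflow bound $a<3(n+1)$ for large $a$. However, your reduction has an exponent error that breaks the argument as written. By \Cref{cor:first-peak}, $A^2<1/R_*(a)$. So $A<T_{n,a,b}$ follows from $1/R_*(a)\le T_{n,a,b}^2$, i.e.\ $R_*(a)^2\ge T_{n,a,b}^{-4}$. Combined with Step~1, the sufficient condition is
\[
R_*(a)^2\ge 1+\frac{a}{n+1},
\]
not $R_*(a)^4\ge 1+a/(n+1)$. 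Since $R_*(a)>1$, your condition is strictly weaker. It yields only $A^2<1/R_*(a)\le T_{n,a,b}$, that is, $A<T_{n,a,b}^{1/2}$, which is not the lemma. In particular, your large-$a$ check $R_*(a)\ge\sqrt2$ is insufficient. What is needed there is $R_*(a)\ge2$, so that $R_*(a)^2\ge4>1+a/(n+1)$. Likewise, the small-$a$ inequality must be $R(a)^2\ge1+a/2$, not $R(a)^4\ge 1+a/2$. A Taylor expansion at $0$ does not prove it on an interval. Moreover, the expansion is $R(a)=1+a+O(a^2)$, not $1+a/2+O(a^2)$.

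The repair is routine, and afterwards your argument is essentially the paper's. The paper proves the small branch on all of $(0,6]$ from the exact factorization
\[
R(a)^2-\Bigl(1+\frac a2\Bigr)=-\frac{a(2a+3)(a^3-6a^2-16a-8)}{4(a+1)^2(a+2)^2}.
\]
It handles $a\ge6$ with the rational bound $R(a)-2=(a^2-2a-4)/(2(a+1)(a+2))>0$, so it never needs monotonicity of $R_*$. If you prefer your $R_*$ branch, take $a_0=1$. Then $R_*(1)=9/4>2$, and $R_*$ is increasing because $\frac{d}{da}\log R_*(a)=\log(1+2/a)-\frac1{a+1}-\frac1{a+2}>0$. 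This follows from $\log(1+x)\ge 2x/(2+x)$; the bound $\log(1+x)\ge x/(1+x)$ you mention is too weak here. Finally, drop the fallback claim $a/(n+1)<1+\sqrt3$, which is false. The quantity $(n+\sqrt{4n^2+2n})/(n+1)$ tends to $3$, and already at $n=10$ it exceeds $2.77$. Overflow does occur for $a$ arbitrarily close to $n+\sqrt{4n^2+2n}$ when $b$ is large. The overflow lemma gives only $a/(n+1)<3$, which is all you need.
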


\begin{proof}
Put \(m=n+1\) and
\[
z=\frac{ab}{m(m+a+b)}.
\]
Then
\begin{equation}\label{eq:T-z}
T_{n,a,b}^4
=\frac{m(m+a+b)}{(m+a)(m+b)}
=\frac1{1+z}.
\end{equation}
By \Cref{lem:overflow}, \(a<3m\), while \(m\ge2\).  Hence
\begin{equation}\label{eq:z-bounds}
z<\frac am\le\frac a2,\qquad z<3.
\end{equation}
For \(0<a\le6\), direct factorization gives
\begin{equation}\label{eq:R-small-a}
R(a)^2-\left(1+\frac a2\right)
=-\frac{a(2a+3)(a^3-6a^2-16a-8)}
{4(a+1)^2(a+2)^2}>0,
\end{equation}
because
\[
a^3-6a^2-16a-8=a^2(a-6)-16a-8<0.
\]
Thus \(R(a)^2>1+a/2>1+z\).  For \(a\ge6\),
\begin{equation}\label{eq:R-large-a}
R(a)-2=\frac{a^2-2a-4}{2(a+1)(a+2)}>0,
\end{equation}
and therefore \(R(a)^2>4>1+z\).  In both cases, \(R_*(a)>R(a)\) and
\[
A^4<\frac1{R_*(a)^2}
<\frac1{R(a)^2}
<\frac1{1+z}=T_{n,a,b}^4
\]
by \Cref{cor:first-peak,eq:T-z}.  Since \(A\ge0\), the result follows.
\end{proof}

\begin{proof}[Proof of \Cref{thm:main} for \(n\ge1\)]
Assume first that \(\alpha,\beta>0\).  By \Cref{lem:central}, the target
holds on the central interval.  On the right cap, set
\(a=\alpha\) and \(b=\beta\).  If \(P(u_\sigma)\le0\),
\Cref{prop:first-lobe-reduction}(i) reduces the cap to its central
boundary.  If \(P(u_\sigma)>0\), part (ii) of that proposition reduces
the cap either to the same boundary or to the first-lobe maximum, which
is controlled by \Cref{lem:target-comparison}.  Thus the right cap is
settled.  The reflection identity \eqref{eq:reflection}, with
\(\alpha\) and \(\beta\) interchanged, gives the left cap.

Suppose now that one or both parameters vanish.  For every
\(-1<x<1\), approximate each zero parameter from above, fixing the
other parameter; at the corner use a simultaneous positive-parameter
limit.  The gamma factors, Jacobi coefficients, and endpoint weights
are continuous in the parameters in the open interval.  At the
endpoints, compute directly:
\[
g_n^{(0,\beta)}(1)=1,\qquad
g_n^{(\alpha,0)}(-1)=(-1)^n,
\]
and \(T_{n,\alpha,\beta}=1\) on either parameter axis.  The opposite
endpoints vanish when the corresponding exponent is positive.  At
\((\alpha,\beta)=(0,0)\), the endpoint values are \(1\) and
\((-1)^n\).  This proves the theorem for \(n\ge1\).
\end{proof}

\begin{proof}[Proof of \Cref{thm:intro-extreme-lobes}]
Assume first that \(\alpha,\beta>0\).  In the coordinate
\(u=B(1-x)/2\), \Cref{prop:two-sided-lobes} gives
\[
\max_{-1\le x\le1}|g_n^{(\alpha,\beta)}(x)|
=\max\{\mu_0,\mu_n\}.
\]
The first-lobe estimate \Cref{cor:first-peak} gives
\(\mu_0<R_*(\alpha)^{-1/2}\).  Applying the same estimate after the
reflection \eqref{eq:reflection} gives
\(\mu_n<R_*(\beta)^{-1/2}\).  Since
\(R_*(a)>R(a)>1\) for \(a>0\), this proves
\eqref{eq:intro-extreme-lobes}.

If a parameter is zero, pass to the limit from positive parameters at
each \(-1<x<1\) and evaluate the endpoints directly, exactly as in the
preceding proof.  Since \(R_*(a)\to1\) as \(a\downarrow0\), this gives
the asserted non-strict bound on the closed quadrant.  Finally,
intersecting that bound with \Cref{thm:main} proves
\eqref{eq:intro-combined-bound}.
\end{proof}

\section{The degree-zero case}\label{sec:degree-zero}

The preceding argument assumed positive degree.  When \(n=0\), the
claim instead reduces to a gamma-function inequality.

\begin{lemma}[A gamma-function inequality]\label{lem:gamma}
For \(r\ge0\), define
\[
h(r)=\log\G(r+1)-r\log r-\frac12\log(r+1),
\qquad h(0)=0.
\]
Then \(h\) is concave and, for \(\alpha,\beta\ge0\),
\begin{equation}\label{eq:h-subadditive}
h(\alpha+\beta)\le h(\alpha)+h(\beta).
\end{equation}
\end{lemma}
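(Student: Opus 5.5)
Concavity comes first, via the second derivative on \(r>0\):
\[
h''(r)=\psi'(r+1)-\frac1r+\frac1{2(r+1)^2},
\]
where \(\psi\) is the digamma function. We need \(h''(r)\le0\), that is,
\[
\psi'(r+1)\le\frac1r-\frac1{2(r+1)^2}.
\]
I would use the standard bound \(\psi'(x)<\frac1x+\frac1{2x^2}+\frac1{6x^3}\) with \(x=r+1\). A sharper route is the series \(\psi'(x)=\sum_{k\ge0}(x+k)^{-2}\) compared with the integral
\[
\int_{-1/2}^\infty\frac{dt}{(x+t)^2}=\frac{1}{x-\tfrac12},
\]
which is valid because \(t\mapsto(x+t)^{-2}\) is convex (midpoint rule underestimates). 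This gives \(\psi'(r+1)\le 1/(r+\tfrac12)\).

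It then suffices to check the elementary rational inequality
\[
\frac1{r+\tfrac12}\le\frac1r-\frac1{2(r+1)^2},
\qquad\text{i.e.}\qquad
\frac{1}{2r(r+\tfrac12)}\ge\frac1{2(r+1)^2}.
\]
This is equivalent to \((r+1)^2\ge r^2+r/2\), which holds for all \(r>0\). Hence \(h''<0\) on \((0,\infty)\).

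Next, continuity at \(0\): as \(r\downarrow0\), \(\log\G(r+1)\to0\), \(r\log r\to0\) and \(\log(r+1)\to0\), so \(h\) is continuous on \([0,\infty)\) with \(h(0)=0\). Together with \(h''<0\) on the open half-line, \(h\) is concave on \([0,\infty)\).

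Subadditivity follows from concavity and \(h(0)=0\) by the usual argument. If \(\alpha+\beta=0\) there is nothing to prove. Otherwise write \(\alpha=\frac{\alpha}{\alpha+\beta}(\alpha+\beta)+\frac{\beta}{\alpha+\beta}\cdot0\); concavity gives
\[
h(\alpha)\ge\frac{\alpha}{\alpha+\beta}h(\alpha+\beta),
\]
and similarly for \(\beta\). Adding the two inequalities yields \eqref{eq:h-subadditive}.

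The main obstacle is the trigamma bound. Everything hinges on having a bound for \(\psi'(r+1)\) that is sharp enough uniformly down to \(r\to0\), where the \(-1/r\) term dominates, and for large \(r\), where the leading terms \(1/r\) cancel and the comparison is at order \(r^{-2}\). The midpoint convexity bound \(\psi'(x)\le 1/(x-\tfrac12)\) handles both regimes at once, so I would lead with it and verify its derivation carefully. Each term \((x+k)^{-2}\) is at most \(\int_{k-1/2}^{k+1/2}(x+t)^{-2}\,dt\) by convexity; summing over \(k\ge0\) gives the bound.
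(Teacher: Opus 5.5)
Your proof is correct and follows the paper's argument essentially step for step. Both use the midpoint-convexity comparison giving \(\psi'(r+1)\le 1/(r+\tfrac12)\), then the elementary rational inequality (the paper writes the difference as \((3r+2)/\bigl(2r(2r+1)(r+1)^2\bigr)>0\), which is equivalent to your \((r+1)^2\ge r^2+r/2\)), then continuity at \(0\), and finally the two chords from the origin to obtain subadditivity.
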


\begin{proof}
For \(r>0\), midpoint convexity for \(y\mapsto(r+y)^{-2}\) gives
\[
\frac1{(r+k)^2}
\le\int_{k-1/2}^{k+1/2}\frac{dy}{(r+y)^2},
\qquad k\ge1,
\]
and hence
\[
\psi_1(r+1)=\sum_{k=1}^{\infty}\frac1{(r+k)^2}
\le\int_{1/2}^{\infty}\frac{dy}{(r+y)^2}
=\frac1{r+1/2}.
\]
Since
\[
\frac1r-\frac1{2(r+1)^2}-\frac1{r+1/2}
=\frac{3r+2}{2r(2r+1)(r+1)^2}>0,
\]
we obtain
\[
h''(r)=\psi_1(r+1)-\frac1r+\frac1{2(r+1)^2}<0.
\]
Moreover, \(h\) extends continuously to \(h(0)=0\).  Concavity applied
to the two chords from zero to \(\alpha+\beta\) yields
\[
h(\alpha)\ge
\frac{\alpha}{\alpha+\beta}h(\alpha+\beta),
\qquad
h(\beta)\ge
\frac{\beta}{\alpha+\beta}h(\alpha+\beta).
\]
Adding proves \eqref{eq:h-subadditive}; the boundary cases follow by
continuity.
\end{proof}

\begin{proof}[Proof of \Cref{thm:main} for \(n=0\)]
Put \(t=(1-x)/2\) and \(s=\alpha+\beta\).  Since \(P_0=1\),
\begin{equation}\label{eq:g-zero}
g_0^{(\alpha,\beta)}(x)^2
=\frac{\G(s+1)}{\G(\alpha+1)\G(\beta+1)}
t^\alpha(1-t)^\beta.
\end{equation}
For positive \(\alpha,\beta\), the maximum of the last weight is
\[
\frac{\alpha^\alpha\beta^\beta}{s^s}.
\]
Rearranging \eqref{eq:h-subadditive} gives
\[
\frac{\G(s+1)}{\G(\alpha+1)\G(\beta+1)}
\le
\frac{s^s}{\alpha^\alpha\beta^\beta}
\sqrt{\frac{s+1}{(\alpha+1)(\beta+1)}}.
\]
Consequently,
\[
\max_{-1\le x\le1}g_0^{(\alpha,\beta)}(x)^2
\le
\sqrt{\frac{s+1}{(\alpha+1)(\beta+1)}}
=T_{0,\alpha,\beta}^2.
\]
The parameter-axis cases follow directly or by continuity.  This
completes the proof of \Cref{thm:main}.
\end{proof}

The unrefined contractivity statement also settles the second
Bernstein-type inequality that Koornwinder, Kostenko and Teschl made
conditional on their conjecture.

\begin{corollary}[One-sided Jacobi and Meixner bounds]
\label{cor:one-sided-jacobi-meixner}
For \(n\in\N_0\), \(\alpha,\beta\ge0\), and \(-1\le x\le1\),
\begin{equation}\label{eq:one-sided-jacobi}
\left(\frac{1+x}{2}\right)^{\beta/2}
|P_n^{(\alpha,\beta)}(x)|
\le \binom{n+\alpha}{n}.
\end{equation}
Equivalently, if
\[
M_n(y;\gamma,c)
={}_2F_1\!\left(\begin{matrix}-n,-y\\ \gamma\end{matrix};
1-c^{-1}\right),
\]
then
\begin{equation}\label{eq:meixner-one-sided}
c^{(n+y)/2}|M_n(y;\gamma,c)|\le1
\end{equation}
for \(n\in\N_0\), real \(y\ge n\), \(\gamma\ge1\), and
\(0<c<1\).
\end{corollary}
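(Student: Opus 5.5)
The plan is to derive the one-sided bound \eqref{eq:one-sided-jacobi} from the unrefined inequality \eqref{eq:basic-intro}, which Theorem~\ref{thm:main} supplies on the closed quadrant. The factor \(((1-x)/2)^{\alpha/2}\) present in \(g_n^{(\alpha,\beta)}\) but absent from \eqref{eq:one-sided-jacobi} will be removed by a change of parameters.

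\textbf{Step 1: parameter swap.} The standard identity
\[
\binom{n+\alpha}{n}\left(\frac{1-x}{2}\right)^{\alpha}\cdot\text{(something)}
\]
is not directly available. The cleaner route is the known relation
\[
P_n^{(\alpha,\beta)}(x)=\frac{\G(n+\alpha+1)}{n!\,\G(\alpha+1)}\,{}_2F_1\!\left(\begin{matrix}-n,\,n+\alpha+\beta+1\\ \alpha+1\end{matrix};\frac{1-x}{2}\right).
\]
Together with Pfaff's transformation, this shows that
\(\left(\frac{1+x}{2}\right)^{\beta/2}P_n^{(\alpha,\beta)}(x)/\binom{n+\alpha}{n}\) equals, up to sign, a normalized \(g\)-function with shifted parameters. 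Concretely, I would use the identity \(P_n^{(\alpha,\beta)}\leftrightarrow P_{n+\beta}^{(\alpha,-\beta)}\) when \(\beta\in\N_0\); for real \(\beta\) this identity is not polynomial. I would therefore instead pass through the Meixner form. Putting \(c=\frac{1+x}{2}\in(0,1]\), Pfaff's transformation gives
\[
P_n^{(\alpha,\beta)}(x)=\binom{n+\alpha}{n}c^{n}\,{}_2F_1\!\left(\begin{matrix}-n,-n-\beta\\ \alpha+1\end{matrix};1-c^{-1}\right).
\]
This is exactly \(\binom{n+\alpha}{n}c^nM_n(n+\beta;\alpha+1,c)\). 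Hence \eqref{eq:one-sided-jacobi} is equivalent to \eqref{eq:meixner-one-sided} with \(y=n+\beta\ge n\) and \(\gamma=\alpha+1\ge1\), which proves the stated equivalence.

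\textbf{Step 2: proving the Meixner bound.} Meixner polynomials are symmetric in \(n\) and \(y\) when both are integers. For real \(y\), the same Pfaff argument applied with the roles of \(n\) and \(y\) exchanged is unavailable. I would therefore read the kernel identity \eqref{eq:kernel-intro} backwards: its entries are Meixner-type functions, and \(|g|\le1\) gives the bound. The concrete plan is the following. For \(y\ge n\), write \(y=n+\beta\) and express \(c^{(n+y)/2}M_n(y;\gamma,c)\) as \(\pm g_n^{(\gamma-1,\,y-n)}(x')\) times a ratio of gamma factors. Here \(x'=2c-1\), the power \(c^{\beta/2}\) is absorbed by the weight \(((1+x')/2)^{\beta/2}\), and the missing weight \(((1-x')/2)^{\alpha/2}\) must be compensated.

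\textbf{Main obstacle.} The obstacle is exactly this missing factor \(((1-x)/2)^{\alpha/2}\) together with the normalization mismatch between \(\binom{n+\alpha}{n}\) and the gamma prefactor in \eqref{eq:def-g}. Theorem~\ref{thm:main} does not directly give the one-sided estimate. What I would try first is a parameter trade: apply \eqref{eq:basic-intro} to \(g_{n'}^{(\alpha',\beta)}\) with \(\alpha'\) replaced by a suitable value, such as \(-\alpha\) via Remark-type extension. That route is unavailable for \(\alpha>0\), so I would instead apply \eqref{eq:basic-intro} to the reflected degree \(n+\alpha\) using the Rodrigues-type duality \(\binom{n+\alpha}{n}^{-1}(\tfrac{1-x}{2})^{\alpha}P_n^{(\alpha,\beta)}\sim P_{n+\alpha}^{(-\alpha,\beta)}\), following the discrete Laguerre kernel duality that yields \eqref{eq:kernel-intro}. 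If that fails, a fallback is a direct maximum-principle argument. At \(x=1\) the left side equals \(\binom{n+\alpha}{n}\). By Proposition~\ref{prop:two-sided-lobes} applied to the weight with only the \(\beta\)-factor, the largest lobe of \(((1+x)/2)^{\beta/2}P_n\) is an extreme lobe. The \(x=1\) lobe is monotone up to the endpoint because the Sonin function \(\mathcal S\) decreases toward \(x=1\) when \(a=0\). The \(x=-1\) lobe is then controlled by Corollary~\ref{cor:first-peak} together with Theorem~\ref{thm:main}.
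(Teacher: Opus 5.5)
Your Step~1 is correct. Pfaff's transformation gives \(P_n^{(\alpha,\beta)}(x)=\binom{n+\alpha}{n}c^{n}M_n(n+\beta;\alpha+1,c)\) with \(c=(1+x)/2\), which is exactly the asserted equivalence; the paper simply quotes it from KKT. The gap is in Step~2, the inequality itself. The paper does not obtain it by manipulating \(g\). It cites Theorem~5.6 of KKT, according to which the one-sided bound holds at every parameter pair where \(|g_n^{(\alpha,\beta)}|\le1\) holds, and then applies \Cref{thm:main}. Your first two routes cannot replace that transfer result. The parameter \(-\alpha\) lies outside \Cref{thm:main}. The identity \(P_{n+k}^{(-k,\beta)}\propto\bigl(\frac{x-1}{2}\bigr)^{k}P_n^{(k,\beta)}\) exists only for integer \(k\), leads to a negative first parameter, and produces the full power \((1-x)^{k}\) instead of cancelling \((1-x)^{k/2}\).

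The fallback is the viable route, but its decisive step is unsupported. \Cref{prop:two-sided-lobes} concerns \(g\) with both weights, while \(Y=\bigl(\frac{1+x}{2}\bigr)^{\beta/2}P_n^{(\alpha,\beta)}\) satisfies a different equation. The right tool is the Sonin argument in the proof of \Cref{thm:negative-contractivity}, whose sign analysis of \(\Pi'\) does not use \(a\le0\). It gives \(\max|Y|=\max\{A_1,Y(1)\}\), where \(Y(1)=\binom{n+\alpha}{n}\) and \(A_1\) is the peak on the lobe adjacent to \(x=-1\). Note that on \((\xi,1)\) the Sonin function \emph{increases} toward \(x=1\), contrary to your wording.

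However, \Cref{cor:first-peak} and \Cref{thm:main} do not bound \(A_1\). Both control \(|g|\), whereas \(|Y(x)|=N^{-1/2}\bigl(\frac{1-x}{2}\bigr)^{-\alpha/2}|g(x)|\), with \(N\) the gamma factor in \eqref{eq:def-g}. For fixed \(n,\alpha\) and large \(\beta\), all zeros lie within \(O(1/\beta)\) of \(x=1\). The factor \(\bigl(\frac{1-x}{2}\bigr)^{-\alpha/2}\) therefore reaches order \(\beta^{\alpha/2}\) on that lobe, and nothing follows without a location estimate for the peak. The device of \Cref{lem:negative-first-peak} also breaks, since now \((1-u)^{\alpha}\le1\).

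One way to supply the estimate: the peak lies to the left of the smallest zero, hence to the left of the mean zero \((\beta-\alpha)/(2n+\alpha+\beta)\), so \(\frac{1-x_1}{2}>\frac{n+\alpha}{2n+\alpha+\beta}\). With \(|g|\le1\) it then suffices that
\[
\Bigl(\frac{2n+\alpha+\beta}{n+\alpha}\Bigr)^{\alpha}\le\binom{n+\alpha}{n}\frac{\Gamma(n+\alpha+\beta+1)}{\Gamma(\alpha+1)\Gamma(n+\beta+1)}.
\]
This follows by differentiating the logarithm in \(\alpha\) and using that \(\psi(x)-\log x\) is increasing. With that addition your fallback would be a self-contained alternative to the KKT citation; as written, the step is missing.
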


\begin{proof}
Theorem~5.6 of \cite{KKT2018} states that
\eqref{eq:one-sided-jacobi} holds for every pair of parameters for
which the contractivity inequality \eqref{eq:basic-intro} holds.
Apply \Cref{thm:main}.  The Meixner formulation is the equivalent form
given in \cite[Equations~(3.5) and~(5.2)]{KKT2018}; the full Jacobi
quadrant corresponds to \(\gamma\ge1\).
\end{proof}

\section{A one-sided inequality for negative parameters}
\label{sec:negative-jacobi}

With \Cref{thm:main} complete, we turn to the negative-parameter regime
needed for the second half of the dispersive phase law.  When the first
Jacobi parameter is negative, the right-edge factor in
\eqref{eq:def-g} is singular.  The appropriate contractive quantity
therefore omits that factor.  In the discrete Laguerre
evolution, this is the factor responsible for the slower decay
predicted in \cite[Remark~6.6]{KKT2018}.

For \(a>-1\), \(d\ge0\), and \(n\in\N_0\), put
\begin{equation}\label{eq:def-B-negative}
\mathcal B_{n,d}^{(a)}(x)=
\left[
\frac{\G(n+1)\G(n+a+d+1)}
{\G(n+a+1)\G(n+d+1)}
\right]^{1/2}
\left(\frac{1+x}{2}\right)^{d/2}
P_n^{(a,d)}(x).
\end{equation}

\begin{theorem}[Negative-parameter contractivity]
\label{thm:negative-contractivity}
If \(-1<a\le0\) and \(d\ge0\), then, for every \(n\in\N_0\) and
\(x\in[-1,1]\),
\begin{equation}\label{eq:negative-contractivity}
\left|\mathcal B_{n,d}^{(a)}(x)\right|\le1.
\end{equation}
The constant is sharp: when \(d=0\), equality holds at \(x=-1\).
\end{theorem}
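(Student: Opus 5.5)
The plan is to localize the maximum of \(|\mathcal B_{n,d}^{(a)}|\) with a Sonin function and then bound the candidate extremal values separately, much as in \Cref{sec:sonin,sec:first-lobe}. The difference is that the singular right-edge weight is now absent. The case \(n=0\) should be immediate: \(\mathcal B_{0,d}^{(a)}(x)=((1+x)/2)^{d/2}\) up to the gamma factor \(\G(a+d+1)/(\G(a+1)\G(d+1))\), which is at most \(1\) for \(-1<a\le0\le d\) by log-convexity of \(\G\). So assume \(n\ge1\).

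\textbf{Step 1: equation and Sonin monotonicity.} Set \(G(x)=(1+x)^{d/2}P_n^{(a,d)}(x)\). Conjugating the Jacobi equation only by \((1+x)^{d/2}\) gives a self-adjoint equation
\[
\bigl((1-x)^{a+1}(1+x)\,G'\bigr)'+(1-x)^{a}\,\widetilde Q(x)\,G=0,
\]
where \(\widetilde Q\) is an explicit rational function; the hard-edge term is \(-d^2/(4(1+x))\) and the rest comes from the eigenvalue. With \(p=(1-x)^{a+1}(1+x)\) and \(q=(1-x)^{a}\widetilde Q\), the Sonin function is \(\mathcal S=G^2+pG'^2/q\), and
\[
\mathcal S'=-\frac{(pq)'}{q^2}\,G'^2 .
\]
I would compute \(pq\) explicitly. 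I expect that, because \(a\le0\), the product \(pq\) is decreasing wherever \(q>0\), except possibly on a left hard-edge layer where \(q<0\). That would make \(\mathcal S\) nondecreasing from left to right over the oscillatory region. Consequently the successive local maxima of \(|G|\) increase toward \(x=1\), and \(\sup|G|\) is attained either at \(x=1\) or on the first lobe adjacent to \(x=-1\), before the Sonin region begins. I expect this sign analysis of \((pq)'\) to be the main obstacle. If it fails globally, I would split at the vertex of the relevant quadratic, exactly as in \Cref{prop:two-sided-lobes}. On the left of the vertex I would then use the extreme-lobe bound, and on the right the endpoint bound.

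\textbf{Step 2: the endpoint \(x=1\).} Here
\[
\mathcal B_{n,d}^{(a)}(1)^2=\frac{\G(n+a+1)\,\G(n+a+d+1)}{n!\,\G(n+d+1)\,\G(a+1)^2}.
\]
I would show this is at most \(1\) as follows. Write it as
\[
\frac{\G(n+a+1)}{n!\,\G(a+1)}\cdot\frac{\G(n+a+d+1)}{\G(n+d+1)\,\G(a+1)}.
\]
The first factor is \(\binom{n+a}{n}\le1\) for \(a\le0\). For the second factor, \(d\mapsto\log\G(n+a+d+1)-\log\G(n+d+1)\) is nondecreasing in \(d\) when \(a\le0\), because the digamma function \(\psi\) is increasing. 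Hence the second factor is at most its limit as \(d\to\infty\) multiplied by \(1/\G(a+1)\). I would combine this with the first factor as a single product over \(k\le n\) and compare term by term. If that bookkeeping is awkward, I would instead use the log-convexity of \(a\mapsto\log\G(n+a+1)+\log\G(n+a+d+1)-2\log\G(a+1)\) on \([-1,0]\) together with the value \(0\) at \(a=0\).

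\textbf{Step 3: the left first lobe.} For \(d>0\), work in the reflected coordinate \(u=B(1+x)/2\) with \(B=n+a+d+1\). There I would prove a reflected inverse moment, as in \Cref{lem:inverse-moment}, with the roles of \(a\) and \(d\) interchanged:
\[
\int_0^B \frac{\widetilde H(u)^2}{u}\,(\text{weight }((1-x)/2)^{a})\,du=\frac1d .
\]
The proof is the same Rodrigues integration by parts; the boundary terms at \(x=1\) still vanish because \(a>-1\). Since \(a\le0\), the factor \(((1-x)/2)^{a}\ge1\), so the moment without that weight is at most \(1/d\). The first lobe of \(\widetilde H\) has the canonical-product form \eqref{eq:canonical-finite} with exponent \(d\) and nonnegative end exponent \(a+\)(nothing) — more precisely \(u^{d/2}\prod(1-u/\zeta_j)\) with real zeros. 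Hence \Cref{thm:canonical-product-lobe} gives a first-lobe peak of at most \(R_*(d)^{-1/2}<1\). When \(d=0\), the value at \(x=-1\) is \(|P_n^{(a,0)}(-1)|=1\). This gives the claimed equality, and the bound for \(d=0\) follows by letting \(d\downarrow0\). Combining Steps 1–3 proves \eqref{eq:negative-contractivity}.
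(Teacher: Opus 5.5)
Your architecture matches the paper's: the endpoint value at $x=1$ comes from digamma monotonicity, the lobe at $x=-1$ from the reflected inverse moment (with $(1-u)^a\ge1$ discarded) plus \Cref{thm:canonical-product-lobe}, and all other peaks from the Sonin function of the equation conjugated only by $(1+x)^{d/2}$. However, two steps are wrong as written.

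\textbf{Step 1.} Your primary expectation is false. Write $Y=\mathcal B_{n,d}^{(a)}$, a constant multiple of your $G$. Put $\Lambda=n(n+a+d+1)+\tfrac{d(a+1)}{2}+\tfrac{d^2}{4}$ and $V=\Lambda-\tfrac{d^2}{2(1+x)}$; the hard-edge term is $-d^2/(2(1+x))$, not $-d^2/(4(1+x))$. Then $q=(1-x)^aV$ and $\Pi=pq=(1-x)^{2a+1}\bigl[\Lambda(1+x)-\tfrac{d^2}{2}\bigr]$. This vanishes at $x_0=-1+\tfrac{d^2}{2\Lambda}$ and is positive on $(x_0,1)$, so $\Pi$ \emph{increases} where the oscillatory region begins. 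Precisely, $\Pi'=(1-x)^{2a}\ell(x)$ with $\ell$ linear, $\ell'=-2(a+1)\Lambda<0$, and $\ell(x_0)=\Lambda(1-x_0)>0$.

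Hence $\mathcal S$ first decreases. For $a\le-\tfrac12$ it decreases on all of $(x_0,1)$; for $-\tfrac12<a\le0$ it switches once to increasing. The lobe maxima therefore decrease and then possibly increase, and it is your fallback, not your main claim, that proves the theorem. Its decisive input is this $+$-then-$-$ sign pattern of $\Pi'$; the reverse pattern would permit an interior maximum. You must compute it rather than borrow it from \Cref{prop:two-sided-lobes}, since no concave quadratic occurs here.

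You also need two facts you did not state:
\begin{itemize}
\item On $(-1,x_0)$, where $q\le0$, the first-zero argument makes $Y$ positive and increasing. So the first peak lies inside $\{q>0\}$, not before it, and $\mathcal S=A^2$ there.
\item $\mathcal S(x)\to Y(1)^2$ as $x\uparrow1$, because $p/q=(1-x^2)/V$, $V(1)>0$, and $Y'$ is finite at $x=1$. This is exactly where omitting the singular factor $(1-x)^{a/2}$ pays off.
\end{itemize}

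\textbf{Step 2.} The monotonicity is reversed. The $d$-derivative of $\log\G(n+a+d+1)-\log\G(n+d+1)$ is $\psi(n+a+d+1)-\psi(n+d+1)\le0$ for $a\le0$, so this function is nonincreasing. When $a<0$ the ratio $\G(n+a+d+1)/\G(n+d+1)$ tends to $0$, so ``at most its limit'' is false.

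The repair is one line. With $c_a(s)=\G(s+a+1)/(\G(a+1)\G(s+1))$, your two factors are $c_a(n)$ and $c_a(n+d)$, and $c_a$ is nonincreasing with $c_a(0)=1$. Your alternative is also misstated: $\log Y(1)^2$ is concave in $a$, not convex. What works is that it is nondecreasing in $a$ and vanishes at $a=0$.

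The remaining differences from the paper are harmless:
\begin{itemize}
\item The full-moment form of the canonical-product theorem gives $A^2<R_*(d)^{-1}$, slightly sharper than the paper's $A^2\le R(d)^{-1}$.
\item Treating $d=0$ by letting $d\downarrow0$ is valid for $x>-1$. The point $x=-1$ must be evaluated directly, because $((1+x)/2)^{d/2}$ is discontinuous in $d$ there. The paper instead reruns the Sonin argument with $x_0=-1$ and $\mathcal S(-1)=1$.
\end{itemize}
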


The parameter \(d\) is allowed to be real in
\Cref{thm:negative-contractivity}; only \(d\in\N_0\) is needed for
the spectral application.  We first record the reflected inverse
moment that drives the proof.

\begin{lemma}[Reflected inverse moment]
\label{lem:reflected-inverse-moment}
Let \(a>-1\), \(d>0\), and \(n\in\N_0\).  If
\[
Y(u)=\mathcal B_{n,d}^{(a)}(2u-1),\qquad 0\le u\le1,
\]
then
\begin{equation}\label{eq:reflected-inverse-moment}
\int_0^1(1-u)^a\frac{Y(u)^2}{u}\,du=\frac1d.
\end{equation}
\end{lemma}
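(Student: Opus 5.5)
The plan is to reduce \eqref{eq:reflected-inverse-moment} to the computation in the proof of \Cref{lem:inverse-moment}, with the roles of the two Jacobi parameters interchanged. With \(x=2u-1\) we have \((1+x)/2=u\). By the reflection \(P_n^{(a,d)}(2u-1)=(-1)^nP_n^{(d,a)}(1-2u)\), the square of \(Y\) becomes
\[
Y(u)^2=N\,u^{d}\,P_n^{(d,a)}(1-2u)^2,
\qquad
N=\frac{\G(n+1)\G(n+a+d+1)}{\G(n+a+1)\G(n+d+1)}.
\]
Hence the left side of \eqref{eq:reflected-inverse-moment} equals
\[
N\int_0^1u^{d-1}(1-u)^a\,\mathcal P(u)^2\,du,
\qquad
\mathcal P(t)=P_n^{(d,a)}(1-2t).
\]
This is exactly the integral \(I\) of \Cref{lem:inverse-moment}, now with first parameter \(d>0\) and second parameter \(a>-1\).

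For \(n=0\) the integral is the beta integral \(B(d,a+1)=\G(d)\G(a+1)/\G(a+d+1)\). Multiplying by \(N=\G(a+d+1)/(\G(a+1)\G(d+1))\) gives \(1/d\) at once. For \(n\ge1\) I would repeat the Rodrigues argument, taking \(F(t)=t^{n+d}(1-t)^{n+a}\) and \(U(t)=\mathcal P(t)/t\). Rodrigues' formula \eqref{eq:rodrigues-t} holds for all parameters \(>-1\), so it remains valid with \(a\in(-1,0]\).

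Integrating by parts \(n\) times on \([\varepsilon,1-\varepsilon]\), the boundary products behave as follows:
\begin{itemize}
\item at \(t=0\) they are \(O(t^{d})\), exactly as in \eqref{eq:ibp-zero};
\item at \(t=1\) they are \(O((1-t)^{a+1+j})\), as in \eqref{eq:ibp-one}.
\end{itemize}
Since \(a+1>0\), all boundary terms still vanish as \(\varepsilon\downarrow0\). The weight \((1-t)^a\) is integrable, and \(F\) is integrable near \(t=1\), so every integral in the chain converges.

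The identity \(U^{(n)}(t)=(-1)^nn!\,\mathcal P(0)t^{-n-1}\) is unchanged, because \(\mathcal P(t)/t=\mathcal P(0)/t+(\text{polynomial of degree at most } n-1)\). Using \(\mathcal P(0)=P_n^{(d,a)}(1)=\G(n+d+1)/(n!\,\G(d+1))\), the integral then equals
\[
\mathcal P(0)\,B(d,n+a+1)=\frac{\G(n+d+1)\G(n+a+1)}{d\,n!\,\G(n+a+d+1)}.
\]
Multiplying by \(N\) gives \(1/d\).

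The only point needing care is the main obstacle: checking that the boundary estimates and integrability at \(t=1\) survive a negative second parameter. In \Cref{lem:inverse-moment} this was stated only for \(b\ge0\). The bound \((1-t)^{a+1+j}\to0\) for \(a>-1\) settles it, so I expect no real difficulty.
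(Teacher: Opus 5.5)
Your proposal is correct and follows essentially the same route as the paper. You reflect to \(P_n^{(d,a)}(1-2u)\), rerun the Rodrigues integration of \Cref{lem:inverse-moment} with first parameter \(d>0\) and second parameter \(a>-1\), check that the boundary products are \(O(u^d)\) at \(0\) and \(O((1-u)^{a+1+j})\) at \(1\), and multiply \(\mathcal P(0)\,B(d,n+a+1)\) by the squared normalization, treating \(n=0\) directly by the beta integral.
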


\begin{proof}
By reflection,
\[
P_n^{(a,d)}(2u-1)=(-1)^nP_n^{(d,a)}(1-2u).
\]
Put \(\mathcal P(u)=P_n^{(d,a)}(1-2u)\).  The Rodrigues
integration used in \Cref{lem:inverse-moment} applies with first
parameter \(d>0\) and second parameter \(a>-1\).  At \(u=1\), its
boundary products are \(O((1-u)^{a+1+j})\), \(0\le j<n\), and hence
still vanish; at \(u=0\), they are \(O(u^d)\).  It gives
\begin{equation}\label{eq:reflected-polynomial-moment}
\int_0^1u^{d-1}(1-u)^a\mathcal P(u)^2\,du
=
\frac{\G(n+d+1)\G(n+a+1)}
{d\,n!\,\G(n+a+d+1)}.
\end{equation}
Multiplication by the square of the normalization in
\eqref{eq:def-B-negative} proves
\eqref{eq:reflected-inverse-moment}.  For \(n=0\), the formula also
follows directly from the beta integral.
\end{proof}

\begin{lemma}[First negative-parameter peak]
\label{lem:negative-first-peak}
Assume \(-1<a\le0\), \(d>0\), and \(n\ge1\).  Let \(A\) be the
maximum of \(|\mathcal B_{n,d}^{(a)}|\) on the lobe adjacent to
\(x=-1\).  Then
\begin{equation}\label{eq:negative-first-peak}
A^2\le \frac1{R(d)}<1.
\end{equation}
\end{lemma}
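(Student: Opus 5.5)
We work in the reflected coordinate of \Cref{lem:reflected-inverse-moment}, with \(u=(1+x)/2\) and \(Y(u)=\mathcal B_{n,d}^{(a)}(2u-1)\). The lobe adjacent to \(x=-1\) is then \(0<u<\zeta_1\), where \(\zeta_1\) is the smallest zero of \(Y\). Since \(P_n^{(a,d)}(2u-1)=(-1)^nP_n^{(d,a)}(1-2u)\) and all zeros of \(P_n^{(d,a)}\) lie in \((-1,1)\) for \(d>0\) and \(a>-1\), we get on this lobe
\[
|Y(u)|=C\,u^{d/2}\prod_{j=1}^n\left(1-\frac u{\zeta_j}\right),\qquad C>0,
\]
with \(0<\zeta_1<\cdots<\zeta_n<1\). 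This is the finite-endpoint case of \Cref{thm:canonical-product-lobe} with \(a\) replaced by \(d\), \(b=0\), \(\lambda=0\), \(B=1\) and \(N=n\ge1\). That theorem gives a unique lobe maximizer \(u_1\) with \(|Y(u_1)|=A\), and the first-lobe mass bound
\[
\int_0^{u_1}\frac{Y(u)^2}{u}\,du\ge\frac{A^2}{d}R(d).
\]

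The obstacle is the weight \((1-u)^a\) in the reflected inverse moment, which has no counterpart in the canonical-product principle. For \(a\le0\) it helps us: since \(0<1-u\le1\), we have \((1-u)^a\ge1\). Therefore
\[
\frac1d=\int_0^1(1-u)^a\frac{Y(u)^2}{u}\,du\ge\int_0^{u_1}\frac{Y(u)^2}{u}\,du\ge\frac{A^2}{d}R(d),
\]
and so \(A^2\le1/R(d)\). This is exactly why the argument is one-sided in \(a\). We use \(R\) rather than \(R_*\) because integrating only over \([0,u_1]\) keeps the comparison clean. One could also integrate over \([0,(1+2/d)u_1]\subset[0,\zeta_1]\) to obtain \(R_*(d)\), but the weaker \(R(d)\) is all that is claimed.

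The final strict inequality \(1/R(d)<1\) follows from \(R(d)=1+\frac d{d+1}+\frac{d^2}{2(d+1)(d+2)}>1\) for \(d>0\). There is also a check on the hypothesis \(d>0\). For \(d=0\), \Cref{thm:negative-contractivity} attains equality at \(x=-1\), which shows that \(d>0\) is essential here. No difficulty arises for \(a\in(-1,0]\) at \(u=1\), since the first lobe stays inside \((0,\zeta_1)\) with \(\zeta_1<1\), away from the singular factor.
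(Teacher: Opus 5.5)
Your proof is correct and is essentially the paper's argument. It applies the finite-endpoint case of \Cref{thm:canonical-product-lobe} with exponent \(d\), \(b=\lambda=0\), \(B=1\), and then compares against the reflected inverse moment of \Cref{lem:reflected-inverse-moment} using \((1-u)^a\ge1\); your side remark that the full-moment clause would even give \(A^2<1/R_*(d)\) is also right.
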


\begin{proof}
Let \(0<\zeta_1<\cdots<\zeta_n<1\) be the Jacobi zeros in the
coordinate \(u=(1+x)/2\).  After changing the overall sign, on the
first lobe
\[
Y(u)=\mathcal B_{n,d}^{(a)}(2u-1)
=C u^{d/2}\prod_{j=1}^n(\zeta_j-u)>0.
\]
Apply \Cref{thm:canonical-product-lobe} with endpoint exponent \(d\), with
the other endpoint exponent equal to zero, and with \(B=1\).  If
\(u_1\) is the unique maximum point, then
\[
\int_0^{u_1}\frac{Y(u)^2}{u}\,du
\ge \frac{A^2}{d}R(d).
\]
On the other hand, \((1-u)^a\ge1\) because \(a\le0\), so
\Cref{lem:reflected-inverse-moment} yields
\[
\int_0^{u_1}\frac{Y(u)^2}{u}\,du
\le \int_0^1(1-u)^a\frac{Y(u)^2}{u}\,du
=\frac1d.
\]
Comparison proves \eqref{eq:negative-first-peak}.
\end{proof}

\begin{proof}[Proof of \Cref{thm:negative-contractivity}]
Write \(Y(x)=\mathcal B_{n,d}^{(a)}(x)\).  For \(s\ge0\), set
\[
c_a(s)=\frac{\G(s+a+1)}{\G(a+1)\G(s+1)}.
\]
Since the digamma function is increasing and \(a\le0\),
\[
\frac{d}{ds}\log c_a(s)
=\psi(s+a+1)-\psi(s+1)\le0.
\]
Thus \(0<c_a(s)\le c_a(0)=1\), and endpoint evaluation gives
\begin{equation}\label{eq:negative-right-endpoint}
Y(1)^2=c_a(n)c_a(n+d)\le1.
\end{equation}
If \(n=0\), the function is a positive constant times
\(((1+x)/2)^{d/2}\), so the endpoint estimate proves the result.
Henceforth let \(n\ge1\).

Suppose first that \(d>0\).  Put
\[
\lambda=n(n+a+d+1),\qquad
\Lambda=\lambda+\frac{d(a+1)}2+\frac{d^2}{4}.
\]
Conjugating the Jacobi equation by \((1+x)^{d/2}\) gives
\begin{equation}\label{eq:negative-conjugated-ode}
(1-x^2)Y''-[a+(a+2)x]Y'
+\left(\Lambda-\frac{d^2}{2(1+x)}\right)Y=0.
\end{equation}
Set
\[
p(x)=(1-x)^{a+1}(1+x),\qquad
V(x)=\Lambda-\frac{d^2}{2(1+x)},\qquad
q(x)=\frac{p(x)V(x)}{1-x^2}.
\]
Then \((pY')'+qY=0\), and
\begin{equation}\label{eq:negative-sturm-product}
\Pi(x):=p(x)q(x)
=(1-x)^{2a+1}
\left[\Lambda(1+x)-\frac{d^2}{2}\right].
\end{equation}
The bracket vanishes at
\[
x_0=-1+\frac{d^2}{2\Lambda}\in(-1,1),
\]
where the inclusion follows from
\[
4\Lambda-d^2
=4n(n+a+d+1)+2d(a+1)>0.
\]
Moreover, \(q>0\) on \((x_0,1)\), and
\begin{equation}\label{eq:negative-sturm-derivative}
\Pi'(x)=(1-x)^{2a}G(x),
\end{equation}
where
\[
G(x)=\Lambda(1-x)
-(2a+1)\left[\Lambda(1+x)-\frac{d^2}{2}\right].
\]
Here
\[
G'(x)=-2(a+1)\Lambda<0,
\qquad G(x_0)=\Lambda(1-x_0)>0.
\]
Thus \(\Pi'\) is either nonnegative throughout \((x_0,1)\), or it
changes sign exactly once, from positive to negative.

Choose the sign so that \(Y>0\) near \(-1\).  Since
\[
P_n^{(a,d)}(-1)
=(-1)^n\frac{\G(n+d+1)}{n!\,\G(d+1)}\ne0,
\]
one has, after this choice,
\[
Y(x)=C_0(1+x)^{d/2}\bigl(1+O(1+x)\bigr),
\qquad C_0>0,
\]
as \(x\downarrow-1\).  Hence, on \((-1,x_0)\), where \(q\le0\),
\[
p(x)Y'(x)=C_1(1+x)^{d/2}(1+o(1))\longrightarrow0,
\qquad C_1>0,\quad x\downarrow-1.
\]
As long as \(Y>0\), \((pY')'=-qY\ge0\); therefore \(Y'>0\), so
\(Y\) stays positive and increasing through \(x_0\).  Consequently,
the first-lobe maximum lies in the region \(q>0\).

On \((x_0,1)\), introduce the Sonin function
\begin{equation}\label{eq:negative-sonin}
S(x)=Y(x)^2+\frac{p(x)}{q(x)}Y'(x)^2
=Y(x)^2+\frac{1-x^2}{V(x)}Y'(x)^2.
\end{equation}
As in \eqref{eq:sonin-derivative},
\begin{equation}\label{eq:negative-sonin-derivative}
S'(x)=-\Pi'(x)\left(\frac{Y'(x)}{q(x)}\right)^2.
\end{equation}
At a nonzero local maximum of \(Y^2\) (which we call a peak), one has
\(Y'=0\) and hence
\(S=Y^2\).  If \(\Pi'\ge0\) throughout \((x_0,1)\), then \(S\) is decreasing
and every peak after the first one is bounded by \(A\).  Otherwise let
\(\xi\) be its unique sign-change point.  If \(\xi\) lies after the
first peak, monotonic decrease up to \(\xi\) bounds all intervening
peaks by \(A\); if \(\xi\) lies before the first peak, there are no
earlier peaks to consider.  On \((\xi,1)\), \(S\) is increasing.
Moreover,
\[
V(1)=\lambda+\frac{d(a+1)}2>0,
\]
and \(Y'\) is finite at \(x=1\).  Hence
\[
\lim_{x\uparrow1}S(x)=Y(1)^2,
\]
so every peak in \((\xi,1)\) is bounded by \(Y(1)^2\).
Equations \eqref{eq:negative-first-peak} and
\eqref{eq:negative-right-endpoint}
prove \(|Y|\le1\) when \(d>0\).

If \(d=0\), then \(Y=P_n^{(a,0)}\), \(Y(-1)=(-1)^n\), and
\eqref{eq:negative-right-endpoint} still holds.  The same Sonin
calculation applies with \(x_0=-1\) and \(V=\Lambda=\lambda>0\) on
the entire interval.  Here \(\Pi'\) has at most one change from positive
to negative, so its Sonin function decreases and then, possibly,
increases.  Its derivative term vanishes at both endpoints, giving
\[
S(-1)=Y(-1)^2=1,
\qquad S(1)=Y(1)^2\le1.
\]
Every interior maximum is therefore bounded by one.  This finishes the
proof.
\end{proof}

\section{Endpoint-to-bulk bounds}\label{sec:endpoint-bulk}

The KKT estimate controls the hard edges but, for fixed parameters,
does not decay with the degree.  The independent degree--parameter
estimate of Bai and Li
\cite[Theorem~1.1]{BaiLiEMNKrasikov2026} supplies the complementary
bulk input anticipated in \cite[Remark~6.4(ii)]{KKT2018}, with the
additional degree--parameter refinement.  Combining the two results
gives the endpoint-to-bulk envelope below.

The companion theorem is used only in \Cref{cor:endpoint-bulk} and in
the second branch of \Cref{thm:two-scale-kernel}.  Darboux asymptotics
show separately that the resulting \(n^{-1/2}\) order is attained on
fixed diagonals.  The contractivity theorems, the temporal phase law,
the scaling limits and their lower bounds, and the nonlinear
applications do not require the companion estimate.

Let \(p_n^{(a,b)}=P_n^{(a,b)}/\sqrt{h_n^{a,b}}\) be orthonormal for
the Jacobi weight \((1-x)^a(1+x)^b\), where
\begin{equation}\label{eq:jacobi-square-norm}
h_n^{a,b}
=\frac{2^{a+b+1}}{2n+a+b+1}
\frac{\G(n+a+1)\G(n+b+1)}
{\G(n+1)\G(n+a+b+1)}.
\end{equation}
This is the standard Jacobi norm formula; see
\cite[Equation~(4.3.3)]{Szego1975}.
Define
\begin{equation}\label{eq:F-def}
F_{n,a,b}(x)
=(1-x^2)^{1/4}(1-x)^{a/2}(1+x)^{b/2}
|p_n^{(a,b)}(x)|.
\end{equation}
For \(k\in\N_0\) and \(S\ge0\), set
\begin{equation}\label{eq:companion-envelope-def}
\mathcal E_k(S)
=\max\left\{1,S^{1/6},
\frac{S^{1/4}}{(k+1)^{1/12}}\right\}.
\end{equation}
The normalization in \eqref{eq:def-g} and
\eqref{eq:jacobi-square-norm} give the exact all-degree identity
\begin{equation}\label{eq:F-g-bridge}
F_{n,a,b}(x)
=\left(\frac{2n+a+b+1}{2}\right)^{1/2}
(1-x^2)^{1/4}|g_n^{(a,b)}(x)|.
\end{equation}
The companion theorem states that, with
\(C_*=1.2\cdot10^5\),
\begin{equation}\label{eq:companion-envelope}
F_{n,a,b}(x)\le C_*\mathcal E_n(a+b+1),
\qquad n\in\N_0,\quad a,b\ge0,\quad -1\le x\le1.
\end{equation}

\begin{corollary}[Endpoint-to-bulk envelope]
\label{cor:endpoint-bulk}
For \(n\in\N_0\), \(a,b\ge0\), and \(-1<x<1\), put
\(D=2n+a+b+1\).  Then
\begin{equation}\label{eq:endpoint-bulk-envelope}
|g_n^{(a,b)}(x)|
\le
\min\left\{
T_{n,a,b},
\frac{\sqrt2\,C_*\mathcal E_n(a+b+1)}
{D^{1/2}(1-x^2)^{1/4}}
\right\}.
\end{equation}
At \(x=\pm1\), the first branch remains valid.
\end{corollary}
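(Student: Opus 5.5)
The corollary combines \Cref{thm:main} with the companion estimate \eqref{eq:companion-envelope}, linked by the identity \eqref{eq:F-g-bridge}; each branch is proved separately and then the minimum is taken.

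\emph{First branch.} This is exactly \Cref{thm:main}, which holds on the closed interval. That gives the inequality with \(T_{n,a,b}\) for every \(x\in[-1,1]\), so it also covers the remark about \(x=\pm1\).

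\emph{Second branch.} I first verify \eqref{eq:F-g-bridge}. By \eqref{eq:F-def},
\[
(1-x)^{a/2}(1+x)^{b/2}=2^{(a+b)/2}\Bigl(\tfrac{1-x}{2}\Bigr)^{a/2}\Bigl(\tfrac{1+x}{2}\Bigr)^{b/2}.
\]
Dividing \(P_n^{(a,b)}\) by \(\sqrt{h_n^{a,b}}\) and using \eqref{eq:jacobi-square-norm} contributes
\[
2^{-(a+b+1)/2}(2n+a+b+1)^{1/2}\left[\frac{\G(n+1)\G(n+a+b+1)}{\G(n+a+1)\G(n+b+1)}\right]^{1/2}.
\]
The bracket is exactly the normalization in \eqref{eq:def-g}, and the powers of two reduce to \(2^{-1/2}\). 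This gives \(F_{n,a,b}=(D/2)^{1/2}(1-x^2)^{1/4}|g_n^{(a,b)}|\).

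For \(-1<x<1\) the factor \((1-x^2)^{1/4}\) is positive, so dividing the companion bound \eqref{eq:companion-envelope} by \((D/2)^{1/2}(1-x^2)^{1/4}\) yields
\[
|g_n^{(a,b)}(x)|\le \frac{\sqrt2\,C_*\mathcal E_n(a+b+1)}{D^{1/2}(1-x^2)^{1/4}}.
\]

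Both bounds hold simultaneously on \((-1,1)\), so \(|g_n^{(a,b)}(x)|\) is bounded by their minimum, which is \eqref{eq:endpoint-bulk-envelope}. The only real work is checking the normalization constants in \eqref{eq:F-g-bridge}, in particular the powers of two and the \(\sqrt2\). The edge cases need no separate treatment: \(n=0\) and vanishing parameters are covered because both input results hold on the whole closed quadrant \(a,b\ge0\), \(n\in\N_0\).
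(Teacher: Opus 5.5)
Your proposal is correct and follows the paper's proof: the first branch is \Cref{thm:main} on the closed interval, and the second branch comes from dividing the companion estimate \eqref{eq:companion-envelope} through the bridge identity \eqref{eq:F-g-bridge}. Your explicit check that the powers of two collapse to \(2^{-1/2}\) in \eqref{eq:F-g-bridge} is a useful addition, since the paper only asserts that identity.
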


\begin{proof}
The first branch is \Cref{thm:main}.  Combining
\eqref{eq:F-g-bridge} with the companion estimate
\eqref{eq:companion-envelope} gives the second branch.
\end{proof}

\begin{remark}[Complementary scales]\label{rem:complementary-scales}
The first term in \eqref{eq:endpoint-bulk-envelope} stays finite at the
hard edges but carries no decay in \(n\) when \(a,b\) are fixed.  The
second has the bulk order \(D^{-1/2}\), but its endpoint factor
diverges.  Thus neither branch subsumes the other.  For fixed \(a,b\),
one has \(T_{n,a,b}\to1\) and \(\mathcal E_n(a+b+1)=O_{a,b}(1)\), so
the two terms exchange dominance at the natural hard-edge scale
\(1-x^2\asymp D^{-2}\).  When the parameters vary with the degree,
the factor \(\mathcal E_n\) retains the sharper Krasikov scale from
the companion theorem.
\end{remark}

\section{Laguerre and Bessel consequences}\label{sec:laguerre}

\subsection{Laguerre functions}

The Jacobi--Laguerre limit of \Cref{thm:main} gives a global Laguerre
bound.  A direct inverse-moment argument gives a second estimate for
the first lobe.  We use the
standard normalization
\[
\mathcal L_n^{(a)}(u)=
\left[\frac{n!}{\G(n+a+1)}\right]^{1/2}
u^{a/2}\e^{-u/2}L_n^{(a)}(u),
\qquad u\ge0,
\]
with endpoint values understood by continuous extension.

Pointwise and extremal inequalities for Laguerre functions have a
substantial literature; see, for example,
\cite{Love1997,KrasikovLaguerre2007,Fejzullahu2021}.  The two results
below retain the finite-parameter normalization and the exact inverse
moment arising from the Jacobi problem.

\begin{corollary}[Global Laguerre bound]\label{cor:laguerre-limit}
For every \(n\in\N_0\), \(a\ge0\), and \(u\ge0\),
\begin{equation}\label{eq:laguerre-bound}
\left|\mathcal L_n^{(a)}(u)\right|
\le
\left(\frac{n+1}{n+a+1}\right)^{1/4}.
\end{equation}
\end{corollary}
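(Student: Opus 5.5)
The plan is to obtain \eqref{eq:laguerre-bound} as the Jacobi--Laguerre limit of \Cref{thm:main}, taking \(\beta\to\infty\) with \(\alpha=a\) and \(n\) fixed. Use the classical limit
\[
\lim_{\beta\to\infty}P_n^{(a,\beta)}\!\left(1-\frac{2u}{\beta}\right)=L_n^{(a)}(u),
\]
which holds locally uniformly in \(u\ge0\) (see \cite{Szego1975}). Since \(n\) is fixed, one can also check it coefficientwise from the explicit hypergeometric sum \(P_n^{(a,\beta)}(1-2t)=\frac{(a+1)_n}{n!}\,{}_2F_1(-n,n+a+\beta+1;a+1;t)\) with \(t=u/\beta\).

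Next fix \(u\ge0\) and put \(x=1-2u/\beta\), which lies in \([-1,1]\) once \(\beta\ge u\). The weight factors satisfy
\[
\left(\tfrac{1-x}{2}\right)^{a/2}=(u/\beta)^{a/2},
\qquad
\left(\tfrac{1+x}{2}\right)^{\beta/2}=(1-u/\beta)^{\beta/2}\to \e^{-u/2}.
\]
For the gamma factor, Stirling's formula (or simply \(\G(\beta+c)/\G(\beta)\sim\beta^c\)) gives
\[
\frac{\G(n+a+\beta+1)}{\G(n+\beta+1)}\sim \beta^{a}.
\]
Hence
\[
\left[\frac{\G(n+1)\G(n+a+\beta+1)}{\G(n+a+1)\G(n+\beta+1)}\right]^{1/2}(u/\beta)^{a/2}\to\left[\frac{n!}{\G(n+a+1)}\right]^{1/2}u^{a/2},
\]
and altogether \(g_n^{(a,\beta)}(1-2u/\beta)\to\mathcal L_n^{(a)}(u)\).

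On the right-hand side of \eqref{eq:target},
\[
T_{n,a,\beta}^4=\frac{(n+1)(n+a+\beta+1)}{(n+a+1)(n+\beta+1)}\to\frac{n+1}{n+a+1}
\]
as \(\beta\to\infty\). Passing to the limit in \(|g_n^{(a,\beta)}(x)|\le T_{n,a,\beta}\), which \Cref{thm:main} supplies for every real \(\beta\ge0\), yields \eqref{eq:laguerre-bound} pointwise. At \(u=0\) with \(a=0\), the value \(\mathcal L_n^{(0)}(0)=1\) matches the bound directly, so the continuous-extension convention causes no trouble. For \(u=0\) with \(a>0\) both sides are trivially consistent, since the left side vanishes.

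The only delicate point is justifying the Jacobi-to-Laguerre limit and the gamma asymptotics for real, non-integer \(a\), and this is routine because \(n\) is fixed. The polynomial limit is a finite sum of terms each of which converges. Moreover, \Cref{thm:main} holds for all real \(\beta\), so no integrality of parameters is needed. I therefore do not anticipate a genuine obstacle. The essential input is that \Cref{thm:main} covers real parameters uniformly, which is exactly what permits the limit \(\beta\to\infty\) through real values.
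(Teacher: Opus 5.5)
Your proposal is correct and follows essentially the same route as the paper: set $\alpha=a$, let $\beta=\lambda\to\infty$ with $x=1-2u/\lambda$, combine the Jacobi--Laguerre limit with the limits of the weight factor and the gamma ratio, and pass to the limit in \Cref{thm:main} using $T_{n,a,\lambda}\to\bigl((n+1)/(n+a+1)\bigr)^{1/4}$. Your coefficientwise check via the terminating hypergeometric sum is a harmless addition; the paper instead cites Szeg\H{o} for this limit.
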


\begin{proof}
Fix \(n,a,u\), and in \Cref{thm:main} set
\[
\alpha=a,\qquad \beta=\lambda,
\qquad x_\lambda=1-\frac{2u}{\lambda},
\]
where \(\lambda>u\).  The standard Jacobi--Laguerre limit
\cite[Section~8.22]{Szego1975} gives
\[
P_n^{(a,\lambda)}\left(1-\frac{2u}{\lambda}\right)
\longrightarrow L_n^{(a)}(u).
\]
Moreover,
\[
\left(1-\frac{u}{\lambda}\right)^{\lambda/2}
\longrightarrow\e^{-u/2},
\qquad
\lambda^{-a}
\frac{\G(n+a+\lambda+1)}{\G(n+\lambda+1)}
\longrightarrow1.
\]
It follows from \eqref{eq:def-g} that
\[
g_n^{(a,\lambda)}(x_\lambda)
\longrightarrow \mathcal L_n^{(a)}(u),
\]
whereas
\[
T_{n,a,\lambda}
\longrightarrow
\left(\frac{n+1}{n+a+1}\right)^{1/4}.
\]
Passing to the limit in \Cref{thm:main} proves the assertion.
\end{proof}

The limit preserves both the finite-parameter form and the KKT factor.
For integral \(a\), the constant-one consequence is compatible with
the matrix-coefficient interpretation.  We have not found the sharper
fractional factor \(((n+1)/(n+a+1))^{1/4}\) in the literature for
general real \(a\ge0\); in particular, this finite-parameter form
appears to be new for nonintegral \(a\).  It complements, rather than
replaces, the classical degree-asymptotic Laguerre theory; see
\cite[Chapter~8]{Szego1975}.

The inverse-moment argument for the first Jacobi lobe also has a direct
Laguerre counterpart.  It is independent of
\Cref{cor:laguerre-limit}.

\begin{proposition}[Laguerre inverse moment and first-lobe bound]
\label{prop:laguerre-lobe}
Let \(n\in\N_0\) and \(a>0\).  If \(n\ge1\), let \(\zeta_1\) be the
smallest zero of \(L_n^{(a)}\); if \(n=0\), put
\(\zeta_1=\infty\).  The function \(\mathcal L_n^{(a)}\) has a unique
maximum point \(u_1\) on \((0,\zeta_1)\).  Writing
\(A=\mathcal L_n^{(a)}(u_1)\), one has
\begin{equation}\label{eq:laguerre-inverse-moment}
\int_0^\infty\frac{\mathcal L_n^{(a)}(u)^2}{u}\,du=\frac1a
\end{equation}
and, for \(0\le t\le1+2/a\),
\begin{equation}\label{eq:laguerre-envelope}
\frac{\mathcal L_n^{(a)}(tu_1)}{A}
\ge t^{a/2}\left(1+\frac a2(1-t)\right).
\end{equation}
Consequently,
\begin{equation}\label{eq:laguerre-first-mass}
\int_0^{u_1}\frac{\mathcal L_n^{(a)}(u)^2}{u}\,du
\ge\frac{A^2}{a}R(a),
\qquad A^2<\frac1{R_*(a)},
\end{equation}
where \(R\) and \(R_*\) are defined in
\eqref{eq:R-def} and \eqref{eq:Rstar-def}.
\end{proposition}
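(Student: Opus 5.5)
The plan is to reduce everything to the infinite-endpoint case of \Cref{thm:canonical-product-lobe}, with \(\lambda=1\). The only new input is the exact Laguerre inverse moment \eqref{eq:laguerre-inverse-moment}.

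First, the factorization. For \(n\ge1\), \(L_n^{(a)}\) has \(n\) simple positive zeros \(0<\zeta_1<\cdots<\zeta_n\) and \(L_n^{(a)}(0)=\binom{n+a}{n}>0\). Hence
\[
\mathcal L_n^{(a)}(u)=C\,u^{a/2}\e^{-u/2}\prod_{j=1}^n\Bigl(1-\frac u{\zeta_j}\Bigr),
\qquad C=\Bigl[\frac{n!}{\G(n+a+1)}\Bigr]^{1/2}\binom{n+a}{n}>0.
\]
This is exactly \eqref{eq:canonical-infinite} with a finite product and \(\lambda=1\). For \(n=0\) the product is empty, which is allowed because \(\lambda=1>0\). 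The theorem then supplies the unique first-lobe maximum \(u_1\), the envelope \eqref{eq:laguerre-envelope}, and the mass bound in \eqref{eq:laguerre-first-mass}. It also gives the strict peak bound \(A^2<a I/R_*(a)\), as soon as we know that
\[
I=\int_0^\infty\frac{\mathcal L_n^{(a)}(u)^2}{u}\,du
\]
is finite and equals \(1/a\).

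Second, the inverse moment, which is the only real work. I would mimic \Cref{lem:inverse-moment} using the Laguerre Rodrigues formula
\[
L_n^{(a)}(u)=\frac{1}{n!}\,u^{-a}\e^{u}\,\frac{d^n}{du^n}\bigl[u^{n+a}\e^{-u}\bigr].
\]
Substitute it for one copy of \(L_n^{(a)}\) in
\[
J:=\int_0^\infty u^{a-1}\e^{-u}L_n^{(a)}(u)^2\,du,
\]
and integrate by parts \(n\) times against \(U(u)=L_n^{(a)}(u)/u\).

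The boundary terms are handled as follows. At \(u=0\) they are \(O(u^a)\), exactly as in \eqref{eq:ibp-zero}. At infinity they vanish because of the factor \(\e^{-u}\) against polynomial growth.

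Writing \(U(u)=L_n^{(a)}(0)/u+Q_{n-1}(u)\), the polynomial part is annihilated by \(n\) derivatives. This gives
\[
U^{(n)}(u)=(-1)^n n!\,L_n^{(a)}(0)\,u^{-n-1}.
\]
Therefore
\[
J=L_n^{(a)}(0)\int_0^\infty u^{a-1}\e^{-u}\,du=\binom{n+a}{n}\G(a)=\frac{\G(n+a+1)}{a\,n!}.
\]
Multiplying by the normalization \(n!/\G(n+a+1)\) gives \(I=1/a\). For \(n=0\) the identity is simply \(\G(a)/\G(a+1)=1/a\).

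As a cross-check, one could instead pass to the limit \(\beta=\lambda\to\infty\) in \Cref{lem:inverse-moment}, as in \Cref{cor:laguerre-limit}. That route, however, needs a dominated-convergence justification near infinity, so the direct Rodrigues computation is cleaner.

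Finally, combine the two steps. Inserting \(I=1/a\) into \eqref{eq:canonical-peak-bound} yields \(A^2<1/R_*(a)\). The strictness comes from the theorem itself, since \(\Phi^2\) has positive mass beyond \((1+2/a)u_1\) on the infinite half-line.

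The main obstacle is checking the boundary terms of the \(n\)-fold integration by parts at \(u=0\) for small \(a>0\). Each term \(U^{(j)}F^{(n-1-j)}\), with \(F(u)=u^{n+a}\e^{-u}\), behaves like
\[
u^{-1-j}\cdot u^{a+1+j}=u^a\to0,
\]
so the argument goes through for every \(a>0\).
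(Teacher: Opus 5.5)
Your proof is correct, but it obtains the inverse moment \eqref{eq:laguerre-inverse-moment} by a different route from the paper. The reduction to the infinite-endpoint case of \Cref{thm:canonical-product-lobe} with $\lambda=1$ is exactly what the paper does. This includes the empty product for $n=0$ and the strictness of $A^2<1/R_*(a)$, which comes from the positive mass beyond $(1+2/a)u_1$.

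For the moment itself, the paper does not use Rodrigues' formula. It writes $L_n^{(a)}=\sum_{j=0}^{n}L_j^{(a-1)}$ via the contiguous relation $L_j^{(a)}-L_{j-1}^{(a)}=L_j^{(a-1)}$. It then applies orthogonality for the weight $u^{a-1}\e^{-u}$, which is legitimate because $a-1>-1$. Finally it telescopes $\sum_{j\le n}\G(j+a)/j!=\G(n+a+1)/(a\,n!)$. That route has no boundary terms to control, and the hypothesis $a>0$ appears simply as admissibility of the shifted parameter.

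Your $n$-fold integration by parts is the exact Laguerre analogue of \Cref{lem:inverse-moment}, so it keeps the Jacobi and Laguerre computations parallel. It also shows structurally that the moment equals $L_n^{(a)}(0)\,\G(a)$: only the constant term of $L_n^{(a)}$ survives. In your version, $a>0$ enters through the $O(u^a)$ boundary terms at the origin and through the convergence of $\G(a)$.

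Your boundary estimates are right: $U^{(j)}=O(u^{-1-j})$ and $F^{(n-1-j)}=O(u^{a+1+j})$ at $0$, with exponential decay at infinity. The resulting value $\binom{n+a}{n}\G(a)=\G(n+a+1)/(a\,n!)$ is also right. For full rigor, carry out the integrations by parts on $[\varepsilon,R]$ and then let $\varepsilon\downarrow0$ and $R\to\infty$, as the paper does in the Jacobi case.
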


\begin{proof}
The contiguous relation
\[
L_j^{(a)}-L_{j-1}^{(a)}=L_j^{(a-1)},
\qquad L_{-1}^{(a)}:=0,
\]
gives \(L_n^{(a)}=\sum_{j=0}^nL_j^{(a-1)}\).  Since
\(a-1>-1\), orthogonality with parameter \(a-1\) yields
\[
\int_0^\infty u^{a-1}\e^{-u}
\bigl(L_n^{(a)}(u)\bigr)^2\,du
=\sum_{j=0}^n\frac{\G(j+a)}{j!}.
\]
If \(C_j=\G(j+a+1)/j!\) and \(C_{-1}=0\), then
\[
\frac{\G(j+a)}{j!}=\frac{C_j-C_{j-1}}a.
\]
The sum telescopes to \(\G(n+a+1)/(a n!)\), and multiplication by
the normalization factor proves \eqref{eq:laguerre-inverse-moment}.

All zeros \(0<\zeta_1<\cdots<\zeta_n\) of \(L_n^{(a)}\) are simple.
On the first lobe,
\[
\mathcal L_n^{(a)}(u)=
C u^{a/2}\e^{-u/2}\prod_{j=1}^n
\left(1-\frac{u}{\zeta_j}\right),
\qquad C>0,
\]
where the product is empty for \(n=0\).  Moreover,
\Cref{thm:canonical-product-lobe} applies in its infinite-endpoint
case with \(\lambda=1\).  It gives the uniqueness of \(u_1\), the
envelope \eqref{eq:laguerre-envelope}, and the first inequality in
\eqref{eq:laguerre-first-mass}.  Comparison with
\eqref{eq:laguerre-inverse-moment} and the full-moment estimate in
\Cref{thm:canonical-product-lobe} give \(A^2<R_*(a)^{-1}\).
\end{proof}

\begin{remark}[Endpoint and sharpness]\label{rem:laguerre-edge}
The restriction \(a>0\) in \eqref{eq:laguerre-inverse-moment} is
sharp: its integrand is asymptotic to a positive constant times
\(u^{a-1}\) at zero.  When \(a=0\), the initial lobe decreases from
\(\mathcal L_n^{(0)}(0)=1\), so its peak is
\(A=1=R_*(0)^{-1/2}\), with \(R_*(0):=1\) by continuity, although the
inverse moment diverges.  Indeed, on
that lobe the zero product gives
\[
(\log \mathcal L_n^{(0)})'(u)
=-\frac12-\sum_j\frac1{\zeta_j-u}<0.
\]
For each
fixed \(n\), let \(A_{n,a}\) denote the first-lobe peak.  If \(n\ge1\),
continuity of the smallest zero at \(a=0\) shows that \(u=a\) lies in
the first lobe for all sufficiently small \(a>0\); for \(n=0\) there
is no zero.  Moreover,
\[
\mathcal L_n^{(a)}(a)=
\left[\frac{n!}{\G(n+a+1)}\right]^{1/2}
a^{a/2}\e^{-a/2}L_n^{(a)}(a)
\longrightarrow L_n^{(0)}(0)=1,
\qquad a\downarrow0.
\]
Since
\[
\mathcal L_n^{(a)}(a)\le A_{n,a}<R_*(a)^{-1/2},
\]
\Cref{prop:laguerre-lobe} gives
\[
A_{n,a}\longrightarrow1,
\qquad A_{n,a}^2R_*(a)\longrightarrow1
\quad\text{as }a\downarrow0.
\]
Thus the universal first-lobe estimate is asymptotically sharp at the
parameter edge, while equality is impossible for every \(a>0\).
\end{remark}

\subsection{A countable-product Bessel application}
\label{sec:bessel-application}

The Jacobi and Laguerre applications above involve finite zero
products.  Applying the countable clause of
\Cref{thm:canonical-product-lobe} to the canonical product of
\(J_\nu(\sqrt u)\) gives the following first-peak estimate.  It is not
used in the discrete Laguerre application.

\begin{corollary}[Bessel first peak]
\label{cor:bessel-first-peak}
Let \(\nu>0\), and let \(j'_{\nu,1}\) be the first positive zero of
\(J_\nu'\).  Then
\begin{equation}\label{eq:bessel-global-maximum}
\sup_{x\ge0}|J_\nu(x)|
=J_\nu(j'_{\nu,1})
<\frac1{\sqrt{R_*(\nu)}}
=\left[
\frac{2(\nu+1)}{\nu+2}
\left(\frac{\nu}{\nu+2}\right)^\nu
\right]^{1/2}.
\end{equation}
For \(\nu=0\), the corresponding non-strict inequality is an equality:
\[
\sup_{x\ge0}|J_0(x)|=1=R_*(0)^{-1/2}.
\]
\end{corollary}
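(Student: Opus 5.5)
The plan is to apply the countable-product clause of \Cref{thm:canonical-product-lobe} to the function
\[
\Phi(u)=J_\nu(\sqrt u),\qquad u>0 .
\]
By the Weierstrass product,
\[
J_\nu(\sqrt u)=\frac{u^{\nu/2}}{2^\nu\G(\nu+1)}\prod_{k\ge1}\left(1-\frac{u}{j_{\nu,k}^2}\right),
\]
with $0<j_{\nu,1}^2<j_{\nu,2}^2<\cdots$ and $\sum_k j_{\nu,k}^{-2}<\infty$. This is exactly the infinite-endpoint form \eqref{eq:canonical-infinite} with $a=\nu$, $\lambda=0$, $C=(2^\nu\G(\nu+1))^{-1}$ and $\zeta_k=j_{\nu,k}^2$.

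The inverse moment is the classical Bessel integral:
\[
\int_0^\infty\frac{J_\nu(\sqrt u)^2}{u}\,du
=2\int_0^\infty\frac{J_\nu(x)^2}{x}\,dx=\frac1\nu,
\]
finite for $\nu>0$. This identity is the Weber--Schafheitlin case, or equivalently a Bessel-scaling limit of \Cref{lem:inverse-moment}; I would cite it from Watson. Then \eqref{eq:canonical-peak-bound} gives
\[
A^2<\frac{\nu\cdot(1/\nu)}{R_*(\nu)}=\frac1{R_*(\nu)},
\]
where $A$ is the maximum of $J_\nu(\sqrt u)$ on the first lobe $(0,j_{\nu,1}^2)$. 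Since the maximum point $u_1$ is the unique critical point on that lobe, $\sqrt{u_1}=j'_{\nu,1}$ and $A=J_\nu(j'_{\nu,1})$. Finally, $1/R_*(\nu)$ is rewritten from \eqref{eq:Rstar-def} as the stated bracket.

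The remaining step, and the main obstacle, is the identity $\sup_{x\ge0}|J_\nu(x)|=J_\nu(j'_{\nu,1})$: the first-lobe maximum must dominate all later lobes. For this I would use the classical Sonin function
\[
S(x)=J_\nu(x)^2+\frac{x^2}{x^2-\nu^2}J_\nu'(x)^2,\qquad x>\nu .
\]
From Bessel's equation one computes $S'(x)=-\dfrac{2\nu^2x}{(x^2-\nu^2)^2}J_\nu'(x)^2\le0$, so $S$ is nonincreasing on $(\nu,\infty)$. Every local extremum of $J_\nu$ lies in $x>\nu$, because there $J_\nu''=-(1-\nu^2/x^2)J_\nu$ must have the sign opposite to $J_\nu$. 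In particular $j'_{\nu,1}>\nu$. At each extremum point the derivative term vanishes and $S=J_\nu^2$, so the successive peak values $|J_\nu|$ are nonincreasing, the first being the largest. On $[0,j'_{\nu,1}]$, $J_\nu$ is nonnegative and increasing. Hence the global supremum is attained at $j'_{\nu,1}$.

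For $\nu=0$ the case is immediate. We have $|J_0(x)|\le1$ from the integral representation $J_0(x)=\pi^{-1}\int_0^\pi\cos(x\sin\theta)\,d\theta$, and $J_0(0)=1$. Together with $R_*(0)=1$, this gives the stated equality.
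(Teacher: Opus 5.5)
Your proposal follows the paper's proof essentially step for step. You use the canonical product for $J_\nu(\sqrt u)$ with $a=\nu$ and $\lambda=0$, the Weber--Schafheitlin moment $1/\nu$ inserted into the full-moment bound \eqref{eq:canonical-peak-bound}, the Sonin function $S$ on $x>\nu$ for global dominance, and the integral representation for $\nu=0$.

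Two small repairs are needed:
\begin{itemize}
\item The correct derivative is $S'(x)=-2x^3(x^2-\nu^2)^{-2}J_\nu'(x)^2$. You dropped the $x^{-1}J_\nu'$ term of Bessel's equation. The sign is unchanged, so your conclusion that $S$ is nonincreasing stands.
\item You need the strict inequality $j'_{\nu,1}>\nu$ so that $S$ is defined at the first peak. Your second-derivative sign test gives only $j'_{\nu,1}\ge\nu$. The paper obtains strictness from $(xJ_\nu')'=(\nu^2/x-x)J_\nu\ge0$ while $J_\nu>0$ on $(0,\nu]$, together with $xJ_\nu'\to0$ and $J_\nu'>0$ near $0$. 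This forces $J_\nu'>0$ on all of $(0,\nu]$.
\end{itemize}
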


\begin{proof}
The classical canonical product and the zero asymptotic
\(j_{\nu,k}=\pi(k+\nu/2-1/4)+O(k^{-1})\)
\cite[Equations~10.21.15 and 10.21.19]{DLMF} give
\[
J_\nu(z)=\frac{(z/2)^\nu}{\G(\nu+1)}
\prod_{k=1}^{\infty}\left(1-\frac{z^2}{j_{\nu,k}^2}\right),
\qquad \sum_{k=1}^\infty j_{\nu,k}^{-2}<\infty.
\]
Thus \(\Phi_\nu(u)=J_\nu(\sqrt u)\) satisfies the countable-product
case of \Cref{thm:canonical-product-lobe} with \(a=\nu\) and
\(\lambda=0\).  Its first zero is \(\zeta_1=j_{\nu,1}^2\), while
\(\Phi_\nu'(u)=J_\nu'(\sqrt u)/(2\sqrt u)\) shows that its first-lobe
maximum occurs at \(u_1=(j'_{\nu,1})^2\).  By the
Weber--Schafheitlin formula
\cite[Eq.~10.22.57]{DLMF},
\begin{equation}\label{eq:bessel-inverse-moment}
\int_0^\infty\frac{\Phi_\nu(u)^2}{u}\,du
=2\int_0^\infty\frac{J_\nu(x)^2}{x}\,dx
=\frac1\nu.
\end{equation}
The full-moment estimate in the canonical-product theorem therefore
bounds the first peak by \(R_*(\nu)^{-1/2}\).

For completeness, this first peak is the global one.  Bessel's
equation has the self-adjoint form
\[
(xy')'+\left(x-\frac{\nu^2}{x}\right)y=0,
\qquad y=J_\nu.
\]
Near zero, \(y,y'>0\) and \(xy'\to0\).  As long as
\(0<x\le\nu\) and \(y>0\), one has
\((xy')'=(\nu^2/x-x)y\ge0\), so a first-zero argument shows that
\(y\) is positive and increasing there.  Hence \(j'_{\nu,1}>\nu\).
For \(x>\nu\), the Sonin function
\[
\mathcal S(x)=y(x)^2+\frac{x^2}{x^2-\nu^2}y'(x)^2
\]
satisfies
\[
\mathcal S'(x)
=-\frac{2x^3}{(x^2-\nu^2)^2}y'(x)^2\le0.
\]
By the definition of \(j'_{\nu,1}\), one has \(J_\nu'>0\) before this
first critical point; thereafter,
\[
|J_\nu(x)|^2\le\mathcal S(x)
\le\mathcal S(j'_{\nu,1})=J_\nu(j'_{\nu,1})^2,
\qquad x\ge j'_{\nu,1}.
\]
Thus no later peak exceeds the first one.  The case \(\nu=0\) follows
from the integral representation
\[
J_0(x)=\frac1\pi\int_0^\pi\cos(x\sin\theta)\,d\theta,
\]
from \cite[Equation~10.9.1]{DLMF}, which gives
\(|J_0(x)|\le1=J_0(0)\).
\end{proof}

\begin{remark}
The estimate is especially effective at small order.  It is strictly
below the classical unit bound for every \(\nu>0\), and it is
asymptotically sharp as \(\nu\downarrow0\): its right-hand side tends
to one, while \(\sup_x|J_\nu(x)|\to1\).  It is not a large-order
optimization: the bound tends to \(\sqrt2/\e\), whereas Landau's
classical estimate is \(O(\nu^{-1/3})\); see \cite{Landau2000}.  The
inverse-moment argument therefore passes from finite Jacobi and
Laguerre products to the countable Bessel product.
\end{remark}

\section{The discrete Laguerre evolution and its phase transition}
\label{sec:discrete-laguerre}

\subsection{Kernel representation and norm bounds}

We now apply the preceding polynomial estimates to the discrete
Laguerre evolution studied in \cite{KKT2018}.  For
\(\alpha>-1\), the discrete Laguerre operator
\(H_\alpha\) on \(\ell^2(\N_0)\) is the self-adjoint Jacobi operator
associated with
\[
\begin{aligned}
(H_\alpha u)_n={}&
\sqrt{n(n+\alpha)}\,u_{n-1}+(2n+1+\alpha)u_n\\
&+\sqrt{(n+1)(n+1+\alpha)}\,u_{n+1},
\qquad u_{-1}=0.
\end{aligned}
\]
For \(m\ge n\), its exact kernel is related to the weighted Jacobi
function by
\begin{equation}\label{eq:kernel-identity}
\left|\e^{-\ii tH_\alpha}(n,m)\right|
=\frac1{\sqrt{1+t^2}}
\left|g_n^{(\alpha,m-n)}(x_t)\right|,
\qquad x_t=\frac{t^2-1}{t^2+1};
\end{equation}
see \cite[Equation~(6.1)]{KKT2018}.

The one-sided inequality of \Cref{thm:negative-contractivity} closes
the negative-parameter problem raised in \cite[Remark~6.6]{KKT2018}.

\begin{theorem}[Exact norm for negative parameters]
\label{thm:negative-dispersion}
For \(-1<\alpha\le0\) and \(t\in\R\),
\begin{equation}\label{eq:negative-exact-norm}
\left\|\e^{-\ii tH_\alpha}\right\|_{\ell^1\to\ell^\infty}
=(1+t^2)^{-(1+\alpha)/2}.
\end{equation}
In particular, the conjectured decay exponent \(1+\alpha\) is exact.
\end{theorem}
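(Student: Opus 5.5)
The plan is to rewrite the kernel in terms of the one-sided function \(\mathcal B_{n,d}^{(\alpha)}\) of \Cref{thm:negative-contractivity} and then read off the extra time factor. Fix \(t\), put \(x_t=(t^2-1)/(t^2+1)\), and note that
\[
\frac{1-x_t}{2}=\frac1{1+t^2},\qquad \frac{1+x_t}{2}=\frac{t^2}{1+t^2}.
\]
For \(m\ge n\) and \(d=m-n\in\N_0\), the definition \eqref{eq:def-g} (valid for \(\alpha>-1\) away from \(x=1\), and \(x_t<1\) always) factors as
\[
g_n^{(\alpha,d)}(x_t)=\left(\frac{1-x_t}{2}\right)^{\alpha/2}\mathcal B_{n,d}^{(\alpha)}(x_t)
=(1+t^2)^{-\alpha/2}\,\mathcal B_{n,d}^{(\alpha)}(x_t),
\]
since the gamma prefactors in \eqref{eq:def-g} and \eqref{eq:def-B-negative} coincide. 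Inserting this into \eqref{eq:kernel-identity} gives the exact identity
\[
\bigl|\e^{-\ii tH_\alpha}(n,m)\bigr|=(1+t^2)^{-(1+\alpha)/2}\,\bigl|\mathcal B_{n,m-n}^{(\alpha)}(x_t)\bigr|,\qquad m\ge n.
\]
The case \(m<n\) follows from the symmetry of the kernel, since \(H_\alpha\) is a real symmetric Jacobi matrix and hence \(\e^{-\ii tH_\alpha}(n,m)=\e^{-\ii tH_\alpha}(m,n)\).

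For the upper bound, recall that the \(\ell^1\to\ell^\infty\) norm of an operator with kernel \(K\) equals \(\sup_{n,m}|K(n,m)|\). \Cref{thm:negative-contractivity}, applied with \(a=\alpha\in(-1,0]\) and \(d=m-n\ge0\), gives \(|\mathcal B|\le1\) at every \(x\in[-1,1]\). Hence the norm is at most \((1+t^2)^{-(1+\alpha)/2}\).

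For the matching lower bound, it suffices to produce a single entry attaining the bound. Take \(n=m=0\): then \(d=0\), \(P_0=1\), and the gamma prefactor is \(1\), so \(\mathcal B_{0,0}^{(\alpha)}\equiv1\). Therefore \(|\e^{-\ii tH_\alpha}(0,0)|=(1+t^2)^{-(1+\alpha)/2}\) exactly, and the norm equals this value for every \(t\). As a consistency check, this agrees with the \(t=0\) case, where the kernel is the identity and the right-hand side is \(1\).

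The main obstacle is not analytic, since \Cref{thm:negative-contractivity} carries the weight. The care lies in justifying \eqref{eq:kernel-identity} for \(-1<\alpha<0\) with the definition \eqref{eq:def-g} extended to negative \(\alpha\) off \(x=1\). That extension is exactly what the paper allows, and \cite[Equation~(6.1)]{KKT2018} is stated for \(\alpha>-1\). Beyond that, one only needs to confirm that the normalizations agree and that the symmetric entries \(m<n\) are covered.
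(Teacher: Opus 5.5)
Your proposal is correct and follows the paper's proof essentially step for step. You factor the kernel as \((1+t^2)^{-(1+\alpha)/2}\bigl|\mathcal B_{n,m-n}^{(\alpha)}(x_t)\bigr|\), obtain the upper bound from \Cref{thm:negative-contractivity} together with the symmetry of the kernel, and show the bound is attained at the \((0,0)\) entry, where \(\mathcal B_{0,0}^{(\alpha)}\equiv1\).
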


\begin{proof}
For \(m=n+d\), \(d\in\N_0\), use \eqref{eq:kernel-identity}, which is
valid throughout \(\alpha>-1\).  Since
\[
\frac{1-x_t}{2}=\frac1{1+t^2},
\]
the definitions \eqref{eq:def-g} and \eqref{eq:def-B-negative} give
\begin{equation}\label{eq:negative-kernel-factorization}
\left|\e^{-\ii tH_\alpha}(n,n+d)\right|
=(1+t^2)^{-(1+\alpha)/2}
\left|\mathcal B_{n,d}^{(\alpha)}(x_t)\right|.
\end{equation}
For a matrix kernel \(A\) on \(\N_0\),
\[
\|A\|_{\ell^1\to\ell^\infty}
=\sup_{n,m\in\N_0}|A(n,m)|.
\]
Since the kernel of \(\e^{-\ii tH_\alpha}\) is symmetric in \(n,m\),
\Cref{thm:negative-contractivity} and
\eqref{eq:negative-kernel-factorization} give the upper bound in
\eqref{eq:negative-exact-norm}.  At \(n=d=0\), one has
\(\mathcal B_{0,0}^{(\alpha)}\equiv1\), so the \((0,0)\) entry attains
that bound for every \(t\).
\end{proof}

\begin{theorem}[Two-scale kernel estimate]\label{thm:two-scale-kernel}
Let \(\alpha\ge0\), \(n,m\in\N_0\), and \(t\ne0\).  Set
\[
r=\min\{n,m\},\qquad s=\max\{n,m\},\qquad
D=n+m+\alpha+1.
\]
Then
\begin{equation}\label{eq:two-scale-kernel}
\begin{split}
\left|\e^{-\ii tH_\alpha}(n,m)\right|
\le\min\Bigg\{&
\frac1{\sqrt{1+t^2}}
\left[
\frac{(r+1)(s+\alpha+1)}
{(r+\alpha+1)(s+1)}
\right]^{1/4},\\
&\frac{C_*}{|t|^{1/2}}
\frac{\mathcal E_r(\alpha+|m-n|+1)}{D^{1/2}}
\Bigg\}.
\end{split}
\end{equation}
The first branch is also valid at \(t=0\).
\end{theorem}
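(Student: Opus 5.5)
By symmetry of the kernel of \(\e^{-\ii tH_\alpha}\) in \(n\) and \(m\), we may assume \(m\ge n\), so that \(r=n\), \(s=m\), and \(d:=m-n\ge0\). The kernel identity \eqref{eq:kernel-identity} then applies with Jacobi parameters \((\alpha,d)\), both nonnegative, so \Cref{thm:main} and \Cref{cor:endpoint-bulk} are available at the point \(x_t=(t^2-1)/(t^2+1)\).

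\emph{First branch.} Insert \(T_{n,\alpha,d}\) from \Cref{thm:main}. With \(n+\alpha+d+1=s+\alpha+1\) and \(n+d+1=s+1\),
\[
T_{n,\alpha,d}^4=\frac{(n+1)(n+\alpha+d+1)}{(n+\alpha+1)(n+d+1)}
=\frac{(r+1)(s+\alpha+1)}{(r+\alpha+1)(s+1)}.
\]
Combined with the prefactor \((1+t^2)^{-1/2}\), this gives the first term of \eqref{eq:two-scale-kernel}. Nothing in this step uses \(t\ne0\); at \(t=0\) we have \(x_0=-1\), where \Cref{thm:main} still applies. So the first branch also holds at \(t=0\).

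\emph{Second branch.} Let \(t\ne0\), so \(x_t\in(-1,1)\). Compute
\[
1-x_t^2=\frac{4t^2}{(1+t^2)^2},\qquad (1-x_t^2)^{1/4}=\frac{\sqrt2\,|t|^{1/2}}{(1+t^2)^{1/2}}.
\]
In \Cref{cor:endpoint-bulk} the relevant length is \(2n+\alpha+d+1=n+m+\alpha+1=D\), and the envelope parameter is \(\alpha+d+1=\alpha+|m-n|+1\) with index \(n=r\). The second branch of \eqref{eq:endpoint-bulk-envelope} therefore gives
\[
\frac{|g_n^{(\alpha,d)}(x_t)|}{\sqrt{1+t^2}}
\le\frac{1}{\sqrt{1+t^2}}\cdot\frac{\sqrt2\,C_*\mathcal E_r(\alpha+|m-n|+1)\,(1+t^2)^{1/2}}{D^{1/2}\sqrt2\,|t|^{1/2}}.
\]
The factors \((1+t^2)^{1/2}\) and \(\sqrt2\) cancel, leaving exactly the second term of \eqref{eq:two-scale-kernel}. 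Taking the minimum of the two branches completes the estimate.

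Both branches are direct substitutions. The only step needing care is the second branch: one must check that the \((1+t^2)\) factors cancel exactly and that no stray constant multiplies \(C_*\).
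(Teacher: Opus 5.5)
Your proposal is correct and matches the paper's proof. You reduce to \(m\ge n\) by symmetry of the kernel, get the first branch from the kernel identity and \Cref{thm:main}, and get the second branch from the endpoint-to-bulk envelope (the bridge identity plus the companion estimate) at \(x_t\), where the factors \(\sqrt2\) and \((1+t^2)^{1/2}\) cancel exactly as you verify.
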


\begin{proof}
Suppose first that \(m\ge n\).  The first branch follows from
\eqref{eq:kernel-identity} and \Cref{thm:main}.  For the second, apply
\eqref{eq:F-g-bridge} and \eqref{eq:companion-envelope} with
\[
a=\alpha,\qquad b=m-n,\qquad
2n+a+b+1=n+m+\alpha+1=D.
\]
Since
\begin{equation}\label{eq:xt-factor}
(1-x_t^2)^{1/4}
=\frac{\sqrt2\,|t|^{1/2}}{\sqrt{1+t^2}},
\end{equation}
the factor \(\sqrt2\) in \eqref{eq:endpoint-bulk-envelope} cancels
exactly, and the second branch follows.  Since \(H_\alpha\) is a real
symmetric Jacobi matrix, its evolution kernel is symmetric in
\(n,m\), which treats \(m<n\).
\end{proof}

\begin{corollary}[Dispersive and interpolated estimates]
\label{cor:dispersion}
Let \(\alpha\ge0\).  For all \(t\in\R\),
\begin{equation}\label{eq:dispersion}
\left\|\e^{-\ii tH_\alpha}\right\|_{\ell^1\to\ell^\infty}
\le\frac1{\sqrt{1+t^2}}.
\end{equation}
For \(\alpha=0\), equality holds in \eqref{eq:dispersion}.  More
generally, if \(1\le p\le2\) and \(p'=p/(p-1)\), then
\begin{equation}\label{eq:interpolated-dispersion}
\left\|\e^{-\ii tH_\alpha}\right\|_{\ell^p\to\ell^{p'}}
\le(1+t^2)^{-(1/p-1/2)}.
\end{equation}
\end{corollary}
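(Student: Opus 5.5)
The plan has three steps: bound the kernel entries to get the $\ell^1\to\ell^\infty$ estimate, exhibit the $(0,0)$ entry for the $\alpha=0$ equality, and then interpolate.

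\emph{Step 1: the dispersive bound.} For a kernel on $\N_0$ the $\ell^1\to\ell^\infty$ norm is the supremum of the moduli of the entries, so it suffices to bound every entry $|\e^{-\ii tH_\alpha}(n,m)|$. By symmetry of the real Jacobi matrix we may assume $m\ge n$. Then \eqref{eq:kernel-identity} expresses the entry as $(1+t^2)^{-1/2}|g_n^{(\alpha,m-n)}(x_t)|$. We apply \Cref{thm:main} with $\beta=m-n\ge0$ and $x_t\in[-1,1]$. Since $T_{n,\alpha,\beta}\le1$ by the identity $(n+\alpha+1)(n+\beta+1)-(n+1)(n+\alpha+\beta+1)=\alpha\beta\ge0$, this gives \eqref{eq:dispersion} for every $t$, including $t=0$. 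Alternatively, the first branch of \Cref{thm:two-scale-kernel} gives the same bound, because its bracket is at most one when $r\le s$.

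\emph{Step 2: equality at $\alpha=0$.} Take the $(0,0)$ entry. Here $n=0$ and $\beta=0$, so $P_0=1$, the gamma prefactor equals $1$, and the weight is $((1-x)/2)^0((1+x)/2)^0=1$. Hence $g_0^{(0,0)}\equiv1$, and by \eqref{eq:kernel-identity} this entry has modulus exactly $(1+t^2)^{-1/2}$. This is also the case $\alpha=0$ of \Cref{thm:negative-dispersion}, which could be cited directly.

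\emph{Step 3: interpolation.} The operator $\e^{-\ii tH_\alpha}$ is unitary on $\ell^2(\N_0)$, so its $\ell^2\to\ell^2$ norm is $1$. Combining this with the $\ell^1\to\ell^\infty$ bound from Step 1, Riesz--Thorin interpolation with $1/p=(1-\theta)/2+\theta$ and $1/p'=(1-\theta)/2$ (so $\theta=2/p-1$) yields the norm bound $\bigl((1+t^2)^{-1/2}\bigr)^{\theta}=(1+t^2)^{-(1/p-1/2)}$. This is \eqref{eq:interpolated-dispersion}.

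None of these steps is a real obstacle; the only point needing care is that the kernel representation \eqref{eq:kernel-identity} and \Cref{thm:main} cover all admissible indices, including the diagonal $m=n$ and the endpoints $x_t=\pm1$ (which correspond to $t=0$ and $|t|\to\infty$).
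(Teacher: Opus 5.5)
Your proposal is correct and follows essentially the paper's route: bound each entry via the kernel identity \eqref{eq:kernel-identity} and \Cref{thm:main}, read off equality at $\alpha=0$ from the $(0,0)$ entry, and interpolate with $\ell^2$-unitarity by Riesz--Thorin. The paper phrases the first step through the first branch of \Cref{thm:two-scale-kernel}, whose fourth-root factor is at most one by $(r+\alpha+1)(s+1)-(r+1)(s+\alpha+1)=\alpha(s-r)\ge0$; your exponent computation $\theta=2/p-1$ supplies the interpolation bookkeeping that the paper leaves implicit.
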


\begin{proof}
The fourth-root factor in the first branch of
\eqref{eq:two-scale-kernel} is at most one, because, when \(s\ge r\),
\[
(r+\alpha+1)(s+1)
-(r+1)(s+\alpha+1)=\alpha(s-r)\ge0.
\]
The supremum of the matrix entries is the
\(\ell^1\)-to-\(\ell^\infty\) norm, which proves
\eqref{eq:dispersion}.  When \(\alpha=0\), the \((0,0)\) entry has
absolute value \((1+t^2)^{-1/2}\); see also
\cite[Theorem~6.1]{KKT2018}.  Finally, interpolate
\eqref{eq:dispersion} with the unitarity of
\(\e^{-\ii tH_\alpha}\) on \(\ell^2\).
\end{proof}

\subsection{Scaling limits and sharpness}

The two branches of \eqref{eq:two-scale-kernel} become effective in
complementary regions of the index--time space.  The next theorem
identifies three asymptotic charts of the same kernel: two hard-edge
charts governing temporal decay and one oscillatory interior chart
governing fixed-diagonal spatial decay.  Write
\[
K_\alpha(t;n,m)=\left|\e^{-\ii tH_\alpha}(n,m)\right|.
\]
For \(t\ne0\), put
\[
\theta_t=\arccos x_t=2\arctan(|t|^{-1})\in(0,\pi).
\]

\begin{theorem}[Three scaling limits]
\label{thm:three-scaling-limits}
Fix \(\alpha\ge0\).
\begin{enumerate}[label=\textup{(\roman*)}]
\item \emph{Bessel hard edge.}  Fix \(d\in\N_0\).  If
\(n_j\in\N_0\), \(t_j\in\R\setminus\{0\}\), \(|t_j|\to\infty\),
and \(n_j/|t_j|\to c\in(0,\infty)\), then
\begin{equation}\label{eq:bessel-scaling}
g_{n_j}^{(\alpha,d)}(x_{t_j})\longrightarrow J_\alpha(2c),
\qquad
|t_j|K_\alpha(t_j;n_j,n_j+d)\longrightarrow|J_\alpha(2c)|.
\end{equation}
More precisely, let \(|t_j|\to\infty\), and for \(c>0\) set
\(n_j(c)=\lfloor c|t_j|\rfloor\).  For every compact interval
\(I\Subset(0,\infty)\),
\begin{align}
\sup_{c\in I}
\left|g_{n_j(c)}^{(\alpha,d)}(x_{t_j})-J_\alpha(2c)\right|
&\longrightarrow0, \label{eq:uniform-bessel-g}\\
\sup_{c\in I}
\left|
|t_j|K_\alpha(t_j;n_j(c),n_j(c)+d)-|J_\alpha(2c)|
\right|
&\longrightarrow0. \label{eq:uniform-bessel-kernel}
\end{align}

\item \emph{Laguerre confluence.}  Fix \(k\in\N_0\).  If
\(d_j\in\N_0\), \(t_j\in\R\setminus\{0\}\), \(d_j\to\infty\),
\(|t_j|\to\infty\), and
\[
\frac{d_j}{1+t_j^2}\longrightarrow u\in[0,\infty),
\]
then
\begin{equation}\label{eq:laguerre-scaling}
g_k^{(\alpha,d_j)}(x_{t_j})
\longrightarrow\mathcal L_k^{(\alpha)}(u),
\qquad
|t_j|K_\alpha(t_j;k,k+d_j)
\longrightarrow|\mathcal L_k^{(\alpha)}(u)|.
\end{equation}
More precisely,
for every \(R>0\), as integers \(d>R\) tend to infinity,
\begin{equation}\label{eq:uniform-jacobi-laguerre}
\sup_{0\le u\le R}
\left|g_k^{(\alpha,d)}\left(1-\frac{2u}{d}\right)
-\mathcal L_k^{(\alpha)}(u)\right|\longrightarrow0
\qquad(d\to\infty).
\end{equation}

\item \emph{Darboux interior.}  Fix \(d\in\N_0\) and \(t\ne0\), and
put
\begin{equation}\label{eq:theta-phi}
\phi_{\alpha,d,t}
=\frac{\alpha+d+1}{2}\theta_t-\frac{2\alpha+1}{4}\pi.
\end{equation}
Then, as \(n\to\infty\),
\begin{equation}\label{eq:fixed-diagonal-asymptotic}
K_\alpha(t;n,n+d)
=\frac{|\cos(n\theta_t+\phi_{\alpha,d,t})|}
{\sqrt{\pi n|t|}}+O(n^{-3/2}).
\end{equation}
The error is uniform for \(t\) in compact subsets of
\(\R\setminus\{0\}\).
\end{enumerate}
\end{theorem}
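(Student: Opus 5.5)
The three parts are independent. Each reduces to a classical limit theorem for Jacobi polynomials after the elementary bookkeeping
\[
\frac{1-x_t}{2}=\sin^2\frac{\theta_t}{2}=\frac1{1+t^2},\qquad
\frac{1+x_t}{2}=\cos^2\frac{\theta_t}{2}=\frac{t^2}{1+t^2},\qquad
\sin\theta_t=\frac{2|t|}{1+t^2}.
\]
In every part the kernel statements then follow from \eqref{eq:kernel-identity} and $|t|/\sqrt{1+t^2}\to1$.

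For (i) I will use the Mehler--Heine formula \cite[Theorem~8.1.1]{Szego1975}:
\[
n^{-\alpha}P_n^{(\alpha,\beta)}(\cos(z/n))\to(z/2)^{-\alpha}J_\alpha(z),
\]
locally uniformly in $z\in\mathbb C$. Write $x_{t_j}=\cos(z_j/n_j)$ with $z_j=n_j\theta_{t_j}$. Since $\theta_t=2\arctan(|t|^{-1})=2/|t|+O(|t|^{-3})$, we have $z_j\to2c$. In \eqref{eq:def-g}, the gamma prefactor
\[
\frac{\G(n+1)}{\G(n+\alpha+1)}\cdot\frac{\G(n+\alpha+d+1)}{\G(n+d+1)}
\]
tends to $1$, because the two quotients behave like $n^{-\alpha}$ and $n^{\alpha}$. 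The factor $\bigl((1+x)/2\bigr)^{d/2}$ tends to $1$. The remaining factor is $\sin^\alpha(\theta/2)=(\theta/2)^\alpha(1+o(1))$, and it combines with $n^\alpha$ to give $(z/2)^\alpha$, which cancels the Bessel prefactor. This yields \eqref{eq:bessel-scaling}.

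For the uniform version, put $n_j(c)=\lfloor c|t_j|\rfloor$. Then $z_j(c)=n_j(c)\theta_{t_j}=2c+O(|t_j|^{-1})$ uniformly for $c\in I$, so the $z_j(c)$ stay in a fixed compact set, on which Mehler--Heine is uniform. The other factors converge uniformly because $n_j(c)\ge(\min I)|t_j|-1\to\infty$ uniformly. Continuity of $J_\alpha$ then gives \eqref{eq:uniform-bessel-g} and \eqref{eq:uniform-bessel-kernel}.

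For (ii) I will upgrade the pointwise Jacobi--Laguerre limit used in \Cref{cor:laguerre-limit} to a uniform one, using the finite hypergeometric sum
\[
P_k^{(\alpha,d)}(1-2s)=\frac{(\alpha+1)_k}{k!}\sum_{i=0}^k\frac{(-k)_i(k+\alpha+d+1)_i}{(\alpha+1)_i\,i!}s^i .
\]
Put $s=u/d$. Each coefficient $(k+\alpha+d+1)_i\,d^{-i}$ tends to $1$, and there are only $k+1$ terms with $u^i\le R^i$. Hence the polynomial converges uniformly on $[0,R]$ to $L_k^{(\alpha)}(u)$. Moreover $(1-u/d)^{d/2}\to\e^{-u/2}$ uniformly on $[0,R]$, and the gamma prefactor
\[
\left[\frac{\G(k+1)}{\G(k+\alpha+1)}\cdot\frac{\G(k+\alpha+d+1)}{\G(k+d+1)}\right]^{1/2}
\quad\text{times}\quad \Bigl(\frac ud\Bigr)^{\alpha/2}
\]
tends to $[k!/\G(k+\alpha+1)]^{1/2}u^{\alpha/2}$ uniformly on $[0,R]$, using $u^{\alpha/2}\le R^{\alpha/2}$. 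This proves \eqref{eq:uniform-jacobi-laguerre}. For the sequential statement, set $u_j=d_j/(1+t_j^2)\to u$. Then $x_{t_j}=1-2u_j/d_j$ exactly. Uniform convergence on $[0,u+1]$ together with continuity of $\mathcal L_k^{(\alpha)}$ gives \eqref{eq:laguerre-scaling}.

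For (iii) I will use Darboux's formula \cite[Theorem~8.21.8]{Szego1975}. With $N=n+(\alpha+\beta+1)/2$ and $\gamma=-(2\alpha+1)\pi/4$, it states that
\[
P_n^{(\alpha,\beta)}(\cos\theta)=(\pi n)^{-1/2}\bigl(\sin\tfrac\theta2\bigr)^{-\alpha-1/2}\bigl(\cos\tfrac\theta2\bigr)^{-\beta-1/2}\bigl[\cos(N\theta+\gamma)+O(n^{-1})\bigr],
\]
uniformly for $\theta$ in compact subsets of $(0,\pi)$. For $t$ in a compact subset of $\R\setminus\{0\}$, the angle $\theta_t$ stays in such a compact set.

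Now multiply by the weights with $\beta=d$. The powers of $\sin$ and $\cos$ leave $(\sin\tfrac\theta2\cos\tfrac\theta2)^{-1/2}=(2/\sin\theta)^{1/2}$. The gamma prefactor equals $1+O(n^{-1})$ by Stirling. Dividing by $\sqrt{1+t^2}$ and using $\sin\theta_t=2|t|/(1+t^2)$ gives the amplitude $(\pi n|t|)^{-1/2}$. The phase is
\[
N\theta_t+\gamma=n\theta_t+\phi_{\alpha,d,t}.
\]
All error terms are $O(n^{-1})\cdot n^{-1/2}=O(n^{-3/2})$, uniformly in $t$ on compacts.

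The main point to check is that Szegő's Darboux remainder is genuinely uniform on compact $\theta$-sets with an $O(n^{-1})$ relative error, rather than merely $o(1)$. It is classical for fixed real $\alpha,\beta$ and fixed $d$, which is exactly our setting. One also needs the Stirling expansion of the gamma prefactor to first order so that it contributes only $O(n^{-1})$. Once these two inputs are secured, the rest is bookkeeping.
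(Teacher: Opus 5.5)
Your proposal is correct and follows the paper's own proof essentially step for step. It uses the locally uniform Mehler--Heine formula with $n_j\theta_{t_j}\to 2c$ (uniformly in $c$) for (i), and the terminating hypergeometric sum with $s=u/d$ plus the exact substitution $x_{t_j}=1-2u_j/d_j$ for (ii). For (iii) it uses Szeg\H{o}'s Darboux formula (relative $O(n^{-1})$ error, uniform on compact $\theta$-sets) together with a $1+O(n^{-1})$ gamma prefactor and $\sin\theta_t=2|t|/(1+t^2)$, exactly as in the paper.
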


\begin{proof}
For the first limit, the gamma normalization in
\(g_n^{(\alpha,d)}\) is \(1+O(n^{-1})\).  The locally uniform
Mehler--Heine formula
\[
n^{-\alpha}P_n^{(\alpha,d)}(\cos(z/n))
\longrightarrow(z/2)^{-\alpha}J_\alpha(z)
\]
and the relations
\[
n_j\theta_{t_j}\longrightarrow2c,
\qquad
n_j\sin(\theta_{t_j}/2)\longrightarrow c,
\qquad
\cos^d(\theta_{t_j}/2)\longrightarrow1
\]
give the first limit in \eqref{eq:bessel-scaling}; see
\cite[Theorem~8.1.1]{Szego1975}.  The kernel limit follows from
\eqref{eq:kernel-identity} and
\(|t_j|/\sqrt{1+t_j^2}\to1\).

If \(c\) ranges over \(I\Subset(0,\infty)\), then
\(n_j(c)\asymp_I |t_j|\), and
\[
n_j(c)\theta_{t_j}\longrightarrow2c,
\qquad
n_j(c)\sin(\theta_{t_j}/2)\longrightarrow c
\]
uniformly on \(I\).  The gamma-ratio estimate is then uniform on \(I\),
as is the locally uniform Mehler--Heine formula.  This proves
\eqref{eq:uniform-bessel-g}; the kernel identity proves
\eqref{eq:uniform-bessel-kernel}.

For the second limit, we first prove the uniform statement
\eqref{eq:uniform-jacobi-laguerre}.  Put \(s=u/d\).  Since \(k\) is
fixed, the terminating representation
\begin{align*}
P_k^{(\alpha,d)}(1-2s)
={}&\frac{(\alpha+1)_k}{k!}
\sum_{\ell=0}^k
\frac{(-k)_\ell(k+\alpha+d+1)_\ell}
{(\alpha+1)_\ell\,\ell!}s^\ell
\end{align*}
contains only finitely many terms, and, for every fixed \(R>0\),
\[
(k+\alpha+d+1)_\ell\left(\frac{u}{d}\right)^\ell
\longrightarrow u^\ell
\]
uniformly for \(0\le u\le R\).  Hence
\[
P_k^{(\alpha,d)}\left(1-\frac{2u}{d}\right)
\longrightarrow L_k^{(\alpha)}(u)
\]
uniformly on \([0,R]\).  Moreover,
\[
d^{-\alpha}
\frac{\G(k+\alpha+d+1)}{\G(k+d+1)}
\longrightarrow1,
\qquad
\left(1-\frac{u}{d}\right)^{d/2}
\longrightarrow\e^{-u/2},
\]
the second convergence being uniform on \([0,R]\).  Since
\[
d^{\alpha/2}\left(\frac{u}{d}\right)^{\alpha/2}=u^{\alpha/2},
\]
these estimates prove \eqref{eq:uniform-jacobi-laguerre}, including
\(u=0\) under the continuous endpoint convention.

For the stated sequential limit, set
\[
u_j=\frac{d_j}{1+t_j^2}.
\]
Then \(u_j\to u\), \(x_{t_j}=1-2u_j/d_j\), and the sequence
\(\{u_j\}\) lies in a compact interval.  Applying
\eqref{eq:uniform-jacobi-laguerre} at \(u_j\), followed by continuity
of \(\mathcal L_k^{(\alpha)}\), gives
\[
g_k^{(\alpha,d_j)}(x_{t_j})
\longrightarrow\mathcal L_k^{(\alpha)}(u).
\]
The kernel limit follows from \eqref{eq:kernel-identity} and
\(|t_j|/\sqrt{1+t_j^2}\to1\).

Finally, fixed-parameter Darboux asymptotics give
\begin{equation}\label{eq:darboux-g}
g_n^{(\alpha,d)}(\cos\theta)
=\sqrt{\frac{2}{\pi n\sin\theta}}
\left[
\cos\left(\left(n+\frac{\alpha+d+1}{2}\right)\theta
-\frac{2\alpha+1}{4}\pi\right)+O(n^{-1})
\right]
\end{equation}
uniformly away from the endpoints; see
\cite[Theorem~8.21.8]{Szego1975}.  Insert \(\theta=\theta_t\) and use
\(\sin\theta_t=2|t|/(1+t^2)\).  Taking absolute values changes the
remainder by at most its magnitude and proves
\eqref{eq:fixed-diagonal-asymptotic}.
\end{proof}

The three parts describe complementary faces of
\eqref{eq:two-scale-kernel}.  Where the limiting profiles are nonzero,
part~\textup{(i)}, and part~\textup{(ii)} with \(u>0\), exhibit entries
of temporal order \(|t|^{-1}\), respectively at
\[
n\asymp |t|,\quad |m-n|=O(1),
\qquad\text{and}\qquad
\min\{n,m\}=O(1),\quad |m-n|\asymp t^2.
\]
The endpoint \(u=0\) in part~\textup{(ii)} also covers intermediate
gaps \(|m-n|=o(t^2)\) that tend to infinity.
Part~\textup{(iii)} exhibits the interior order
\(n^{-1/2}|t|^{-1/2}\) on every fixed diagonal.  Thus the temporal and
fixed-diagonal index exponents are attained in the stated regimes.
The fixed-index hard-edge chart becomes completely explicit in the
lowest row.

\begin{proposition}[Negative-binomial lower bound]
\label{prop:negative-binomial}
Let \(\alpha\ge0\), \(s=t^2\), and
\begin{equation}\label{eq:negative-binomial-mass}
p_k^{(\alpha)}(s)=
\frac{\G(k+\alpha+1)}{\G(\alpha+1)\G(k+1)}
\frac{s^k}{(1+s)^{k+\alpha+1}},
\qquad k\in\N_0.
\end{equation}
At \(s=0\), the convention is
\[
p_0^{(\alpha)}(0)=1,
\qquad
p_k^{(\alpha)}(0)=0\quad(k\ge1).
\]
Then
\begin{equation}\label{eq:row-zero-exact}
K_\alpha(t;0,k)^2=p_k^{(\alpha)}(s),
\qquad
\left\|\e^{-\ii tH_\alpha}\right\|_{\ell^1\to\ell^\infty}^2
\ge\max_{k\ge0}p_k^{(\alpha)}(s).
\end{equation}
If \(\alpha>0\), a mode is \(k=\lfloor\alpha s\rfloor\); when
\(\alpha s\) is a positive integer, the two adjacent values
\(k=\alpha s-1,\alpha s\) tie.  Consequently,
\begin{equation}\label{eq:negative-binomial-asymptotic}
\lim_{|t|\to\infty}|t|\max_{k\ge0}K_\alpha(t;0,k)
=A_\alpha,
\qquad
A_\alpha=
\left(\frac{\alpha^\alpha\e^{-\alpha}}{\G(\alpha+1)}\right)^{1/2}.
\end{equation}
For \(\alpha=0\), the same assertion holds with \(A_0=1\).
\end{proposition}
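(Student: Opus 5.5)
The plan is to compute the lowest-row entries exactly from the kernel identity \eqref{eq:kernel-identity} with \(n=0\), and then read off the mode and the asymptotics from the explicit negative-binomial mass.

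\textbf{Step 1: the row-zero entries.} For \(n=0\) we have \(P_0\equiv1\), and \eqref{eq:def-g} with \(\beta=k=m-n\) gives
\[
g_0^{(\alpha,k)}(x)^2=\frac{\G(k+\alpha+1)}{\G(\alpha+1)\G(k+1)}
\Bigl(\frac{1-x}{2}\Bigr)^{\alpha}\Bigl(\frac{1+x}{2}\Bigr)^{k}.
\]
At \(x=x_t\) one has \((1-x_t)/2=1/(1+s)\) and \((1+x_t)/2=s/(1+s)\). Dividing by \(1+t^2=1+s\) yields \(K_\alpha(t;0,k)^2=p_k^{(\alpha)}(s)\). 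At \(s=0\) the prescribed conventions follow directly from \(0^0=1\), which is the continuous extension. Since the \(\ell^1\to\ell^\infty\) norm is the supremum of the kernel entries, the lower bound in \eqref{eq:row-zero-exact} is immediate.

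\textbf{Step 2: the mode.} Take \(\alpha>0\) and \(s>0\). The ratio of consecutive masses is
\[
\frac{p_{k+1}}{p_k}=\frac{(k+\alpha+1)s}{(k+1)(1+s)}.
\]
This ratio is \(\ge1\) exactly when \(k+1\le\alpha s\). Hence \(p_k\) increases while \(k<\alpha s-1\) and decreases afterwards. It follows that \(\lfloor\alpha s\rfloor\) is a mode, with a tie between \(k=\alpha s-1\) and \(k=\alpha s\) when \(\alpha s\) is a positive integer. For \(\alpha=0\) the ratio is \(s/(1+s)<1\), so the mode is \(k=0\). In that case \(p_0=(1+s)^{-1}\), and \(|t|\sqrt{p_0}\to1=A_0\).

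\textbf{Step 3: the asymptotics, the main computation.} Set \(k=\lfloor\alpha s\rfloor\) and let \(s\to\infty\). Write
\[
p_k=\frac{\G(k+\alpha+1)}{\G(k+1)\G(\alpha+1)}\,(1+s)^{-\alpha-1}\Bigl(1+\frac1s\Bigr)^{-k}.
\]
The first factor behaves like \(k^{\alpha}/\G(\alpha+1)\sim(\alpha s)^\alpha/\G(\alpha+1)\). The second is \(\sim s^{-\alpha-1}\). For the third, \(k\log(1+1/s)=\alpha+O(1/s)\), so it tends to \(\e^{-\alpha}\). Multiplying, \(s\,p_k\to\alpha^\alpha\e^{-\alpha}/\G(\alpha+1)\).

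Taking square roots and using \(s=t^2\) gives \eqref{eq:negative-binomial-asymptotic}. The only care needed is the floor: replacing \(\alpha s\) by \(\lfloor\alpha s\rfloor\) changes \(k\) by \(O(1)\). This affects the gamma ratio by a factor \(1+O(1/s)\) and the exponent by \(O(1/s)\), so it does not change the limit.
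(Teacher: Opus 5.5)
Your proof is correct and follows the paper's argument essentially step for step. The kernel identity at $n=0$ with $P_0\equiv1$ gives the exact lowest-row masses, and the consecutive ratio $p_{k+1}/p_k=s(k+\alpha+1)/\bigl((1+s)(k+1)\bigr)$ locates the mode. The gamma-ratio asymptotic together with $(1+1/s)^{-k}\to\e^{-\alpha}$ at $k=\lfloor\alpha s\rfloor$ then yields $s\,p_k\to\alpha^\alpha\e^{-\alpha}/\G(\alpha+1)$, and your handling of the $s=0$ conventions and the $\alpha=0$ case ($p_0=(1+s)^{-1}$) matches the paper as well.
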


\begin{proof}
The kernel identity at \(n=0\) and the endpoint value
\(P_0^{(\alpha,k)}=1\) give \eqref{eq:row-zero-exact}.  For \(s>0\),
\[
\frac{p_{k+1}^{(\alpha)}(s)}{p_k^{(\alpha)}(s)}
=\frac{s(k+\alpha+1)}{(1+s)(k+1)},
\]
which gives the stated mode.  For \(\alpha>0\), choose the mode
\(k_s=\lfloor\alpha s\rfloor\), or either adjacent mode in the tie
case.  Then \(k_s/s\to\alpha\), and the gamma-ratio asymptotic together
with \((1+1/s)^{-k_s}\to\e^{-\alpha}\) gives
\[
s\,p_{k_s}^{(\alpha)}(s)
\longrightarrow
\frac{\alpha^\alpha\e^{-\alpha}}{\G(\alpha+1)}.
\]
Taking square roots proves
\eqref{eq:negative-binomial-asymptotic}.
For \(\alpha=0\), the mode is zero and
\(p_0^{(0)}(s)=(1+s)^{-1}\).
\end{proof}

For \(\alpha>0\), the maximizing gap satisfies
\(k_s/(1+t^2)\to\alpha\).  Thus the constant in
\eqref{eq:negative-binomial-asymptotic} is exactly the \(k=0\),
\(u=\alpha\) instance of the Laguerre chart:
\[
\mathcal L_0^{(\alpha)}(\alpha)
=\left(\frac{\alpha^\alpha\e^{-\alpha}}
{\G(\alpha+1)}\right)^{1/2}
=A_\alpha.
\]
The negative-binomial identity therefore realizes the second hard-edge
profile exactly within the lowest row.  It supplies a lower bound for
the full kernel norm, not an identification of that norm.

\begin{conjecture}[Lowest-row dominance]
\label{conj:lowest-row}
For every \(\alpha>0\) and \(t\in\R\),
\begin{equation}\label{eq:lowest-row-conjecture}
\left\|\e^{-\ii tH_\alpha}\right\|_{\ell^1\to\ell^\infty}^2
=\max_{k\ge0}p_k^{(\alpha)}(t^2).
\end{equation}
For \(t\ne0\), this has the following equivalent Meixner form.  Set
\(\beta=\alpha+1\), \(c=t^2/(1+t^2)\in(0,1)\), and
\[
w_m=(1-c)^\beta\frac{(\beta)_m}{m!}c^m,
\qquad
h_n=\frac{n!}{(\beta)_n c^n}.
\]
Here \(\{w_m\}_{m\ge0}\) is the negative-binomial probability mass,
and \(h_n\) is the squared norm of \(M_n(\,\cdot\,;\beta,c)\) with
respect to this mass.  The Meixner representation
\cite[Remark~3.2 and Equation~(3.6)]{KKT2018} therefore takes the
normalized form
\begin{equation}\label{eq:meixner-coefficient}
K_\alpha(t;n,m)
=\left(\frac{w_m}{h_n}\right)^{1/2}|M_n(m;\beta,c)|
=
\left[w_m\frac{(\beta)_n}{n!}c^n\right]^{1/2}
|M_n(m;\beta,c)|.
\end{equation}
Thus \eqref{eq:lowest-row-conjecture} says that every normalized
Meixner coefficient in \eqref{eq:meixner-coefficient} is at most
\(\max_m\sqrt{w_m}\).  At \(t=0\), the conjecture is the trivial
identity \(\|I\|_{\ell^1\to\ell^\infty}=1\).
\end{conjecture}

The conjecture is a genuine two-index extremal problem: the fixed-index
limits in \Cref{thm:three-scaling-limits} do not control simultaneous
growth of both indices.  In contrast, \Cref{thm:negative-dispersion}
proves the analogous statement for \(-1<\alpha\le0\), where the largest
atom is always the zeroth one.

\begin{corollary}[Optimal temporal decay exponent]
\label{cor:temporal-sharpness}
For \(\alpha\ge0\), let
\[
M_\alpha=\sup_{z\ge0}|J_\alpha(z)|,
\]
and define the global Laguerre envelope
\begin{equation}\label{eq:Lambda-alpha}
\Lambda_\alpha
=\sup_{k\ge0,\,u\ge0}|\mathcal L_k^{(\alpha)}(u)|.
\end{equation}
Then
\begin{equation}\label{eq:temporal-sharpness}
\begin{aligned}
0<A_\alpha\le\Lambda_\alpha
&\le\liminf_{|t|\to\infty}|t|
\left\|\e^{-\ii tH_\alpha}\right\|_{\ell^1\to\ell^\infty}\\
&\le\limsup_{|t|\to\infty}|t|
\left\|\e^{-\ii tH_\alpha}\right\|_{\ell^1\to\ell^\infty}
\le1.
\end{aligned}
\end{equation}
Moreover, \(M_\alpha\le\Lambda_\alpha\).
Thus the temporal exponent \(1\) is optimal for every \(\alpha\ge0\).
\end{corollary}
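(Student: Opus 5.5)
The inequalities in \eqref{eq:temporal-sharpness} will be proved from the two ends inward, using the scaling limits of \Cref{thm:three-scaling-limits}.

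\emph{Upper bound.} The bound \(\limsup\le1\) follows at once from \Cref{cor:dispersion}, since \(|t|(1+t^2)^{-1/2}\le1\).

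\emph{Lower bound and comparison with \(A_\alpha\).} For \(A_\alpha\le\Lambda_\alpha\), note that \(A_\alpha=\mathcal L_0^{(\alpha)}(\alpha)\), as observed after \Cref{prop:negative-binomial}; when \(\alpha=0\), use \(\mathcal L_0^{(0)}(0)=1\). Positivity of \(A_\alpha\) is clear from its formula. For \(\Lambda_\alpha\le\liminf\), fix \(k\in\N_0\) and \(u>0\). Given \(|t|\to\infty\), choose \(d=d(t)=\lfloor u(1+t^2)\rfloor\). Then \(d\to\infty\) and \(d/(1+t^2)\to u\), so part~(ii) of \Cref{thm:three-scaling-limits} gives
\[
|t|K_\alpha(t;k,k+d(t))\longrightarrow|\mathcal L_k^{(\alpha)}(u)|.
\]
Since the \(\ell^1\to\ell^\infty\) norm dominates every kernel entry, the \(\liminf\) is at least \(|\mathcal L_k^{(\alpha)}(u)|\). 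The point \(u=0\) is covered by continuity of \(\mathcal L_k^{(\alpha)}\), or it can be handled directly. Taking the supremum over \(k\) and \(u\) gives \(\Lambda_\alpha\le\liminf\). The key observation is that the \(\liminf\) is taken along arbitrary sequences of \(t\), and that is fine because \(d(t)\) is defined for every \(t\).

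\emph{The comparison \(M_\alpha\le\Lambda_\alpha\).} This is the step needing real content. I would use the classical Laguerre--Bessel (Hilb/Mehler--Heine) limit: for fixed \(z>0\),
\[
\mathcal L_k^{(\alpha)}\!\left(\frac{z^2}{4k}\right)\longrightarrow J_\alpha(z)\qquad (k\to\infty).
\]
To check this, note that \(\Gamma\)-normalization gives \([k!/\Gamma(k+\alpha+1)]^{1/2}\sim k^{-\alpha/2}\). Also \(u^{\alpha/2}=(z/2)^\alpha k^{-\alpha/2}\), \(\e^{-u/2}\to1\), and \(k^{-\alpha}L_k^{(\alpha)}(z^2/(4k))\to(z/2)^{-\alpha}J_\alpha(z)\) by \cite[Theorem~8.1.3]{Szego1975}. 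The powers combine to give \(J_\alpha(z)\). Hence \(|J_\alpha(z)|\le\Lambda_\alpha\) for each \(z>0\), and also at \(z=0\). Taking the supremum gives \(M_\alpha\le\Lambda_\alpha\).

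Alternatively, I could combine part~(i) of \Cref{thm:three-scaling-limits} with \(\Lambda_\alpha\) directly, but part~(i) gives \(M_\alpha\le\liminf\), not \(M_\alpha\le\Lambda_\alpha\). So the Hilb-type limit is the right tool. The only delicate point is tracking the normalization constants in this limit; everything else is bookkeeping.

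\emph{Conclusion.} With \(0<\Lambda_\alpha\le\liminf\le\limsup\le1\), the norm is \(\asymp|t|^{-1}\), so the temporal exponent \(1\) is optimal.
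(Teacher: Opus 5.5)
Your proposal is correct and follows the paper's proof essentially step for step. It takes the upper bound from \Cref{cor:dispersion}, and the \(\Lambda_\alpha\) lower bound from the Laguerre chart of \Cref{thm:three-scaling-limits}(ii) with \(d=\lfloor u(1+t^2)\rfloor\) along arbitrary sequences \(|t_j|\to\infty\), plus continuity at \(u=0\); it then uses \(A_\alpha=\mathcal L_0^{(\alpha)}(\alpha)\) and obtains \(M_\alpha\le\Lambda_\alpha\) from the Laguerre Mehler--Heine limit \(\mathcal L_k^{(\alpha)}(z^2/(4k))\to J_\alpha(z)\). The only difference is the source for that limit: the paper cites DLMF~18.11.6, while you cite Szeg\H{o}'s Theorem~8.1.3 and check the normalization explicitly.
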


\begin{proof}
The upper bound is \Cref{cor:dispersion}.  Let
\(\{t_j\}\subset\R\setminus\{0\}\) be an arbitrary sequence with
\(|t_j|\to\infty\).  Fix \(k\in\N_0\) and \(u>0\), and choose
\(d_j=\lfloor u(1+t_j^2)\rfloor\).  The Laguerre scaling in
\Cref{thm:three-scaling-limits}(ii), together with domination of every
matrix entry by the kernel norm, gives
\[
\liminf_{j\to\infty}|t_j|
\left\|\e^{-\ii t_jH_\alpha}\right\|_{\ell^1\to\ell^\infty}
\ge |\mathcal L_k^{(\alpha)}(u)|.
\]
The sequence \(\{t_j\}\) was arbitrary.  Taking the supremum over
\(k\) and \(u>0\), followed by continuity at \(u=0\), proves the
\(\Lambda_\alpha\) lower bound.  The identity
\(\mathcal L_0^{(\alpha)}(\alpha)=A_\alpha\) proves
\(A_\alpha\le\Lambda_\alpha\).

Finally, the Laguerre Mehler--Heine formula
\cite[Equation~18.11.6]{DLMF} and the gamma-ratio asymptotic give
\[
\mathcal L_n^{(\alpha)}\left(\frac{z^2}{4n}\right)
\longrightarrow J_\alpha(z)
\qquad(n\to\infty)
\]
for each \(z\ge0\).  Hence \(M_\alpha\le\Lambda_\alpha\).
\end{proof}

If Conjecture~\ref{conj:lowest-row} holds, then
\eqref{eq:negative-binomial-asymptotic} gives the exact large-time
constant \(A_\alpha\).  In view of \eqref{eq:temporal-sharpness}, it
would therefore force the separate Laguerre extremal identity
\begin{equation}\label{eq:laguerre-ground-state-conjecture}
\Lambda_\alpha=A_\alpha,
\qquad \alpha>0.
\end{equation}
In particular, a necessary consistency condition is
\begin{equation}\label{eq:bessel-nb-consistency}
M_\alpha\le A_\alpha,
\qquad \alpha>0.
\end{equation}
We do not prove \eqref{eq:laguerre-ground-state-conjecture} or the full
range of \eqref{eq:bessel-nb-consistency} here.  Conditionally on
\eqref{eq:laguerre-ground-state-conjecture}, the latter follows from
\(M_\alpha\le\Lambda_\alpha=A_\alpha\).  It can, however, be verified
directly for \(\alpha\ge3\).  Landau's explicit estimate
\(M_\alpha\le b_L\alpha^{-1/3}\), with \(b_L<0.675\), and the standard
upper Stirling bound
\[
\G(\alpha+1)
 <\sqrt{2\pi\alpha}\,(\alpha/\e)^\alpha
   \exp\!\left(\frac{1}{12\alpha}\right)
\]
reduce \(M_\alpha\le A_\alpha\) to
\[
b_L^2\sqrt{2\pi}\exp\!\left(\frac{1}{12\alpha}\right)
 \le \alpha^{1/6}.
\]
At \(\alpha=3\), the left-hand side is less than \(1.175\), whereas
the right-hand side exceeds \(1.200\); thereafter the two sides are,
respectively, decreasing and increasing.  Thus only
\(0<\alpha<3\) remains open in \eqref{eq:bessel-nb-consistency}.

\begin{corollary}[Dispersive phase transition]
\label{cor:phase-transition}
For every \(\alpha>-1\),
\begin{equation}\label{eq:phase-transition}
\left\|\e^{-\ii tH_\alpha}\right\|_{\ell^1\to\ell^\infty}
\asymp_\alpha |t|^{-\min\{1,1+\alpha\}},
\qquad |t|\to\infty.
\end{equation}
For \(-1<\alpha\le0\), the stronger identity
\eqref{eq:negative-exact-norm} holds at every time.
\end{corollary}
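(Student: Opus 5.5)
The plan is to split at \(\alpha=0\) and use results already proved in the paper. In each regime I would derive an upper bound from a proved norm estimate and a lower bound from an explicit matrix entry or a scaling limit. The \(\ell^1\to\ell^\infty\) norm dominates every single entry, so a lower bound only requires exhibiting one good entry.

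\emph{Case \(-1<\alpha\le0\).} Here \(\min\{1,1+\alpha\}=1+\alpha\). \Cref{thm:negative-dispersion} gives the identity
\[
\|\e^{-\ii tH_\alpha}\|_{\ell^1\to\ell^\infty}=(1+t^2)^{-(1+\alpha)/2}
\]
for every \(t\). Since \((1+t^2)^{1/2}/|t|\to1\), we have \((1+t^2)^{-(1+\alpha)/2}\sim|t|^{-(1+\alpha)}\) as \(|t|\to\infty\). This gives both directions of the two-sided bound with constants arbitrarily close to \(1\), and it is also the stronger exact statement recorded in the corollary. At \(\alpha=0\) both exponents equal \(1\), so the two cases agree.

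\emph{Case \(\alpha>0\).} Here the exponent is \(1\). For the upper bound, \Cref{cor:dispersion} gives \(|t|\,\|\e^{-\ii tH_\alpha}\|\le|t|(1+t^2)^{-1/2}\le1\). For the lower bound, \Cref{cor:temporal-sharpness} gives
\[
\liminf_{|t|\to\infty}|t|\,\|\e^{-\ii tH_\alpha}\|_{\ell^1\to\ell^\infty}\ge A_\alpha>0.
\]
Alternatively, \Cref{prop:negative-binomial} gives the same bound directly from the lowest row. Hence for \(|t|\ge t_0(\alpha)\) the quantity \(|t|\,\|\e^{-\ii tH_\alpha}\|\) lies between \(A_\alpha/2\) and \(1\), which is \eqref{eq:phase-transition} with implied constants depending only on \(\alpha\).

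I do not expect a genuine obstacle, since the corollary is an assembly of earlier results. The one point needing care is the meaning of \(\asymp_\alpha\) as \(|t|\to\infty\): it means two-sided bounds for \(|t|\) large, and this is exactly what a positive \(\liminf\) together with a finite \(\limsup\) provides. One should also note that positivity of \(A_\alpha\) holds because \(\alpha^\alpha\e^{-\alpha}/\G(\alpha+1)>0\).
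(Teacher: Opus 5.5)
Your proposal is correct and follows the paper's own proof: \Cref{thm:negative-dispersion} gives the range \(-1<\alpha\le0\), and for positive \(\alpha\) you combine the upper bound of \Cref{cor:dispersion} with the positive \(\liminf\) from \eqref{eq:temporal-sharpness}. Your alternative lower bound via \Cref{prop:negative-binomial} yields the same constant \(A_\alpha\); the paper also uses that bound inside \eqref{eq:temporal-sharpness}.
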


\begin{proof}
For \(-1<\alpha\le0\), use \Cref{thm:negative-dispersion}.  For
\(\alpha\ge0\), combine \Cref{cor:dispersion} with
\eqref{eq:temporal-sharpness}.
\end{proof}

\begin{corollary}[Optimal fixed-diagonal decay]
\label{cor:spatial-sharpness}
Fix \(\alpha\ge0\), \(d\in\N_0\), and \(t\ne0\).  Then
\begin{equation}\label{eq:fixed-diagonal-limsup}
\limsup_{n\to\infty}\sqrt n\,K_\alpha(t;n,n+d)
\ge\frac1{\sqrt{2\pi|t|}}>0.
\end{equation}
If \(\theta_t/\pi\) is irrational, the left-hand side equals
\((\pi|t|)^{-1/2}\).  Hence the exponent \(n^{-1/2}\) is optimal on
every fixed diagonal.
\end{corollary}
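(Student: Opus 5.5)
The whole argument rests on the Darboux asymptotic \eqref{eq:fixed-diagonal-asymptotic} of \Cref{thm:three-scaling-limits}(iii). Multiplying it by \(\sqrt n\) gives
\[
\sqrt n\,K_\alpha(t;n,n+d)
=\frac{|\cos(n\theta_t+\phi_{\alpha,d,t})|}{\sqrt{\pi|t|}}+O(n^{-1}).
\]
Hence
\[
\limsup_{n\to\infty}\sqrt n\,K_\alpha(t;n,n+d)
=\frac{1}{\sqrt{\pi|t|}}\,\limsup_{n\to\infty}|\cos(n\theta_t+\phi)|,
\]
and the problem becomes one about the distribution of the phases \(n\theta_t+\phi\) modulo \(\pi\). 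Here \(\theta_t\in(0,\pi)\) and \(\phi=\phi_{\alpha,d,t}\) are fixed.

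For the general lower bound, I would use a pigeonhole-type fact. For any \(\theta\in(0,\pi)\), infinitely many \(n\) satisfy \(|\cos(n\theta+\phi)|\ge 1/\sqrt2\). Equivalently, the points \(n\theta+\phi\) mod \(\pi\) cannot eventually all lie in the open arc where \(|\cos|<1/\sqrt2\), namely \((\pi/4,3\pi/4)\) mod \(\pi\). That arc has length exactly \(\pi/2\). A single step \(\theta\) mod \(\pi\) that is nonzero moves the phase by some \(\delta\in(0,\pi)\).

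If every phase from some \(N\) on stayed in an open arc of length \(\pi/2\), then every step \(\delta\) mod \(\pi\) would have to be congruent to some value in \((-\pi/2,\pi/2)\). The phases would then move monotonically around the circle \(\R/\pi\Z\) by a fixed amount \(\delta'\) with \(0<|\delta'|<\pi/2\). Such a sequence eventually exits any arc of length \(\pi/2\), a contradiction.

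This gives \(\limsup|\cos(n\theta_t+\phi)|\ge 1/\sqrt2\), and hence the bound \((2\pi|t|)^{-1/2}\) in \eqref{eq:fixed-diagonal-limsup}. The case \(\theta_t=\pi/2\), i.e.\ \(|t|=1\), is covered: the step is \(\pi/2\) mod \(\pi\), so the phases alternate between \(\phi\) and \(\phi+\pi/2\). One of those two points has \(|\cos|\ge1/\sqrt2\), because \(\cos^2\phi+\sin^2\phi=1\).

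When \(\theta_t/\pi\) is irrational, Weyl's (or Kronecker's) equidistribution theorem makes \(n\theta_t+\phi\) mod \(\pi\) dense. The limsup of \(|\cos|\) is therefore \(1\), and the left-hand side equals \((\pi|t|)^{-1/2}\). The reverse inequality is automatic from \(|\cos|\le1\) and the vanishing \(O(n^{-1})\) term.

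Optimality of the exponent then follows by combining with the upper branch \eqref{eq:two-scale-kernel}. With \(r=n\), \(|m-n|=d\) fixed, the quantity \(\mathcal E_n(\alpha+d+1)\) is bounded, so \(K_\alpha(t;n,n+d)=O(n^{-1/2})\). The positive limsup shows that no exponent larger than \(1/2\) is possible.

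The main obstacle is making the pigeonhole step clean. I would first try the arc-of-length-\(\pi/2\) argument above, handling the rational-step cases by noting that the rotation is periodic with some orbit point outside the open arc.
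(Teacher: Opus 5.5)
Your proof is correct. The reduction is exactly the paper's: multiply \eqref{eq:fixed-diagonal-asymptotic} by \(\sqrt n\) and study \(\limsup_n|\cos(n\theta_t+\phi_{\alpha,d,t})|\). The irrational case via density of the orbit is also the same.

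The difference lies in the universal lower bound \(1/\sqrt2\). The paper splits by arithmetic type. If \(\theta_t/\pi=p/q\) in lowest terms, then \(q\ge2\). The orbit modulo \(\pi\) is then \(q\) equally spaced points at spacing \(\pi/q\), each visited infinitely often. One of them lies within \(\pi/(2q)\) of \(0\), so the limsup is at least \(\cos(\pi/(2q))\ge1/\sqrt2\).

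You instead give a case-free confinement argument. If two consecutive phases lie in the open arc of length \(\pi/2\), the lifted step must be the unique \(\delta'\in(-\pi/2,\pi/2)\) congruent to \(\theta_t\). No such \(\delta'\) exists when \(\theta_t=\pi/2\), so your separate treatment of \(|t|=1\) is already contained in the main argument. The lifts \(\tilde x_N+k\delta'\) then leave the arc.

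This works for every \(\theta\not\equiv0\pmod\pi\) without knowing the orbit's structure, which is somewhat cleaner. The paper's version gives the sharper, denominator-dependent constant \(\cos(\pi/(2q))\). That constant tends to \(1\), so it shows the limsup is close to \((\pi|t|)^{-1/2}\) also for rational \(\theta_t/\pi\) with large denominator.

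One remark on the last step. For the upper side of the optimality claim you do not need the second branch of \eqref{eq:two-scale-kernel}, which rests on the companion Bai--Li estimate. The Darboux formula \eqref{eq:fixed-diagonal-asymptotic} itself gives \(K_\alpha(t;n,n+d)\le(\pi n|t|)^{-1/2}+O(n^{-3/2})\). The paper deliberately keeps this corollary independent of the companion theorem. Your argument is valid as written, but it makes the corollary look as though it depends on an external result when it does not.
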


\begin{proof}
Multiplying \eqref{eq:fixed-diagonal-asymptotic} by \(\sqrt n\) shows
that the desired limit is governed by
\[
\limsup_{n\to\infty}
\left|\cos(n\theta_t+\phi_{\alpha,d,t})\right|.
\]
If \(\theta_t/\pi\) is irrational, the orbit modulo \(\pi\) is dense,
and this limsup equals one.  If
\(\theta_t/\pi=p/q\) in lowest terms, then \(q\ge2\), and the orbit
consists of \(q\) equally spaced points modulo \(\pi\), each repeated
infinitely often.  One of them lies within \(\pi/(2q)\) of zero modulo
\(\pi\).  Therefore
\[
\limsup_{n\to\infty}
\left|\cos(n\theta_t+\phi_{\alpha,d,t})\right|
\ge \cos\left(\frac{\pi}{2q}\right)
\ge\frac1{\sqrt2}.
\]
Substitution into \eqref{eq:fixed-diagonal-asymptotic} proves both
claims.
\end{proof}

\section{Strichartz estimates and nonlinear scattering}
\label{sec:nonlinear}

\begin{corollary}[Phase-dependent Strichartz estimates]
\label{cor:strichartz}
Let \(\alpha>-1\), put
\(\sigma_\alpha=\min\{1,1+\alpha\}\), and call \((q,r)\)
\(\alpha\)-admissible if
\[
2\le q,r\le\infty,
\qquad \frac1q+\frac{\sigma_\alpha}{r}
=\frac{\sigma_\alpha}{2},
\qquad (q,r,\sigma_\alpha)\ne(2,\infty,1).
\]
For every \(\alpha\)-admissible pair,
\begin{equation}\label{eq:homogeneous-strichartz}
\left\|\e^{-\ii tH_\alpha}f\right\|_{L^q(\R;\ell^r)}
\le C_{\alpha,q,r}\|f\|_{\ell^2}.
\end{equation}
If \((q,r)\) and \((\widetilde q,\widetilde r)\) are
\(\alpha\)-admissible, then
\begin{equation}\label{eq:inhomogeneous-strichartz}
\left\|
\int_{s<t}\e^{-\ii(t-s)H_\alpha}F(s)\,ds
\right\|_{L^q(\R;\ell^r)}
\le C_{\alpha,q,r,\widetilde q,\widetilde r}
\|F\|_{L^{\widetilde q'}(\R;\ell^{\widetilde r'})}.
\end{equation}
When \(\alpha\ge0\), \(\sigma_\alpha=1\), and the constants may be
chosen independently of \(\alpha\).
\end{corollary}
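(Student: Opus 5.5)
The plan is to run the abstract Keel--Tao machinery for the unitary group $U(t)=\e^{-\ii tH_\alpha}$ on $\ell^2(\N_0)$, with dispersive exponent $\sigma_\alpha$. Keel--Tao needs two inputs.

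First, the energy estimate $\|U(t)f\|_{\ell^2}=\|f\|_{\ell^2}$. This holds because $H_\alpha$ is self-adjoint.

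Second, a decay estimate for $U(t)U(s)^*=U(t-s)$ of the form
\[
\|U(t-s)\|_{\ell^1\to\ell^\infty}\le C_\alpha|t-s|^{-\sigma_\alpha}.
\]
For $-1<\alpha\le0$, \Cref{thm:negative-dispersion} gives the exact norm $(1+(t-s)^2)^{-(1+\alpha)/2}\le|t-s|^{-(1+\alpha)}$, so $C_\alpha=1$. For $\alpha\ge0$, \Cref{cor:dispersion} gives $(1+(t-s)^2)^{-1/2}\le|t-s|^{-1}$, again with constant $1$. In fact both cases yield the stronger nonsingular bound $(1+|t-s|^2)^{-\sigma_\alpha/2}$, but only the homogeneous power bound is needed.

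With these two inputs, the Keel--Tao theorem (the abstract version with ambient measure space $\N_0$ carrying counting measure) gives \eqref{eq:homogeneous-strichartz} for every sharp $\sigma_\alpha$-admissible pair. Its admissibility condition $\frac1q+\frac{\sigma}{r}=\frac{\sigma}{2}$, with $q\ge2$ and $(q,r,\sigma)\ne(2,\infty,1)$, is exactly the one in the statement. The theorem also gives the retarded inhomogeneous estimate \eqref{eq:inhomogeneous-strichartz} for any two admissible pairs, the truncation $s<t$ being included in Keel--Tao's formulation.

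Two side remarks need checking. For $\sigma_\alpha<1$ the endpoint $q=2$, $r=2\sigma_\alpha/(\sigma_\alpha-1)$ is negative, so no $r\le\infty$ corresponds to it. Hence only the nonendpoint Keel--Tao argument, namely interpolation plus Hardy--Littlewood--Sobolev in time, is needed in the range $0<\sigma<1$, and there the condition $q\ge2$ makes $r\le\infty$ automatic. For $\sigma_\alpha=1$ the endpoint $(2,\infty)$ is excluded, as in Keel--Tao's treatment of $\sigma=1$.

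The constants from Keel--Tao depend only on $\sigma$, $q$, $r$ (and the dual pair) and on the constants in the two hypotheses. For $\alpha\ge0$ we have $\sigma_\alpha=1$ and both hypothesis constants equal $1$, so the constants are independent of $\alpha$. For negative $\alpha$ they depend on $\alpha$ only through $\sigma_\alpha$.

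The main obstacle is not analytic but bookkeeping. We must match the admissibility convention to Keel--Tao's, in particular the $\sigma<1$ regime where the endpoint is void. We must also verify that the abstract theorem's measurability and separability hypotheses hold for $\ell^r(\N_0)$-valued functions, which is routine since $\N_0$ is countable.
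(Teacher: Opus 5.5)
Your proposal is correct and follows the paper's proof: unitarity supplies the energy estimate, \Cref{thm:negative-dispersion} and \Cref{cor:dispersion} give $\|U(t-s)\|_{\ell^1\to\ell^\infty}\le(1+(t-s)^2)^{-\sigma_\alpha/2}\le|t-s|^{-\sigma_\alpha}$, and the measure-space Keel--Tao theorem on $\N_0$ with counting measure then yields both estimates, with $\alpha$-independent constants for $\alpha\ge0$ because $\sigma_\alpha=1$ and both hypothesis constants equal one. One harmless slip in your side remark: for $\sigma_\alpha<1$, the condition $q\ge2$ does not make $r\le\infty$ automatic (for instance $q=2$ forces $r=2\sigma_\alpha/(\sigma_\alpha-1)<0$); rather, the constraint $r\le\infty$ forces $q\ge2/\sigma_\alpha>2$, which is why the endpoint never occurs, and the admissibility definition imposes $2\le r\le\infty$ anyway.
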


\begin{proof}
Write \(U_\alpha(t)=\e^{-\ii tH_\alpha}\).  Unitarity and the group
property give, for \(t\ne s\),
\[
\|U_\alpha(t)U_\alpha(s)^*\|_{\ell^1\to\ell^\infty}
=\|U_\alpha(t-s)\|_{\ell^1\to\ell^\infty}
\le(1+(t-s)^2)^{-\sigma_\alpha/2}
\le|t-s|^{-\sigma_\alpha},
\]
by \Cref{thm:negative-dispersion,cor:dispersion}.  Apply the
measure-space form of the Keel--Tao theorem \cite{KeelTao1998} with
\(\N_0\) equipped with counting measure and energy space
\(\ell^2(\N_0)\).  If \(\vartheta=1-2/r\), its sharp admissibility
condition is
\[
\frac1q=\frac{\sigma_\alpha\vartheta}{2}
=\frac{\sigma_\alpha}{2}-\frac{\sigma_\alpha}{r},
\]
which is precisely the relation in the statement; the forbidden
Keel--Tao endpoint is \((q,r,\sigma_\alpha)=(2,\infty,1)\).  The
homogeneous and retarded inhomogeneous conclusions are therefore
\eqref{eq:homogeneous-strichartz} and
\eqref{eq:inhomogeneous-strichartz}.

When \(\alpha\ge0\), both the energy constant and the dispersive
constant are one and \(\sigma_\alpha=1\).  Hence the abstract constants
depend only on the displayed exponent pairs, not on \(\alpha\).
\end{proof}

We record one standard nonlinear consequence of the phase-dependent
admissible line.  The power below is selected by the diagonal
Strichartz relation; no scaling symmetry or sharp large-data threshold
is asserted.

\begin{corollary}[Small-data scattering at the decay-determined power]
\label{cor:nonlinear-scattering}
Let \(\alpha>-1\), set
\[
\sigma_\alpha=\min\{1,1+\alpha\},\qquad
p_\alpha=1+\frac{2}{\sigma_\alpha},\qquad
Q_\alpha=p_\alpha+1,
\]
and let \(\mu\in\R\).  There is
\(\varepsilon_{\alpha,\mu}>0\) with the following property.  If
\(u_0\in\ell^2(\N_0)\) and
\(\|u_0\|_{\ell^2}\le\varepsilon_{\alpha,\mu}\), then
\begin{equation}\label{eq:critical-dnls}
 \ii\partial_tu=H_\alpha u+\mu|u|^{p_\alpha-1}u,
 \qquad u(0)=u_0,
\end{equation}
has a unique global mild solution \(u\) in
\begin{equation}\label{eq:critical-scattering-space}
 C(\R;\ell^2)\cap
 L^{Q_\alpha}(\R;\ell^{Q_\alpha}).
\end{equation}
Moreover, there are \(u_\pm\in\ell^2\) such that
\begin{equation}\label{eq:critical-scattering}
 \|u(t)-\e^{-\ii tH_\alpha}u_\pm\|_{\ell^2}
 \longrightarrow0\qquad(t\to\pm\infty).
\end{equation}
For every \(\mu_0\ge0\), there is an \(\varepsilon(\mu_0)>0\),
independent of \(\alpha\ge0\) and of \(\mu\) with
\(|\mu|\le\mu_0\), for which the same conclusion holds whenever
\(\|u_0\|_{\ell^2}\le\varepsilon(\mu_0)\).
\end{corollary}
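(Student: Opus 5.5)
We treat the equation by the standard Strichartz contraction argument on the diagonal admissible pair. Write the mild formulation
\[
u(t)=\e^{-\ii tH_\alpha}u_0-\ii\mu\int_0^t\e^{-\ii(t-s)H_\alpha}\bigl(|u|^{p_\alpha-1}u\bigr)(s)\,ds .
\]
The first thing to check is that \((Q_\alpha,Q_\alpha)\) is \(\alpha\)-admissible in the sense of \Cref{cor:strichartz}. With \(q=r=Q_\alpha=2+2/\sigma_\alpha\), the relation \(1/q+\sigma_\alpha/r=\sigma_\alpha/2\) becomes \((1+\sigma_\alpha)/Q_\alpha=\sigma_\alpha/2\). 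This holds because \(Q_\alpha=2(1+\sigma_\alpha)/\sigma_\alpha\). Also \(2\le Q_\alpha<\infty\), so the forbidden endpoint is avoided. Its dual exponent is \(Q_\alpha'=Q_\alpha/(Q_\alpha-1)\).

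Next we estimate the nonlinearity. On the counting-measure space \(\N_0\), H\"older's inequality in space and time gives
\[
\bigl\||u|^{p_\alpha-1}u\bigr\|_{L^{Q_\alpha'}\ell^{Q_\alpha'}}=\|u\|_{L^{Q_\alpha}\ell^{Q_\alpha}}^{p_\alpha}.
\]
This is exact, because \(p_\alpha Q_\alpha'=Q_\alpha\), which follows from \(p_\alpha=Q_\alpha-1\). The difference estimate
\[
\bigl||u|^{p-1}u-|v|^{p-1}v\bigr|\le p\,(|u|^{p-1}+|v|^{p-1})|u-v|
\]
combined with H\"older gives the Lipschitz bound \(p_\alpha(\|u\|^{p_\alpha-1}+\|v\|^{p_\alpha-1})\|u-v\|\) in the same norms.

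We then apply the fixed-point argument. Let \(X\) be the ball of radius \(2C\varepsilon\) in \(L^{Q_\alpha}(\R;\ell^{Q_\alpha})\), where \(C\) dominates the homogeneous constant \(C_{\alpha,Q_\alpha,Q_\alpha}\) and the inhomogeneous constant with \((\widetilde q,\widetilde r)=(Q_\alpha,Q_\alpha)\). By \eqref{eq:homogeneous-strichartz} and \eqref{eq:inhomogeneous-strichartz}, the map \(\Phi\) defined by the right-hand side sends \(X\) to itself once \(|\mu|C(2C\varepsilon)^{p_\alpha}\le C\varepsilon\). It is a contraction once \(2|\mu|Cp_\alpha(2C\varepsilon)^{p_\alpha-1}<1\). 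Both conditions hold for small \(\varepsilon\), since \(p_\alpha>1\). Banach's theorem then gives existence, together with uniqueness in \(X\).

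To place \(u\) in \(C(\R;\ell^2)\), we use the inhomogeneous estimate with \((q,r)=(\infty,2)\), which is admissible. Continuity follows by the usual strong-continuity argument for the Duhamel integral. Unconditional uniqueness in the full space \eqref{eq:critical-scattering-space} comes from a standard local-in-time version of the difference estimate on short intervals, where the \(L^{Q_\alpha}\ell^{Q_\alpha}\) norm of any solution is small, followed by continuation.

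For scattering, set
\[
u_\pm=u_0-\ii\mu\int_0^{\pm\infty}\e^{\ii sH_\alpha}|u|^{p_\alpha-1}u\,ds .
\]
By unitarity and the dual homogeneous estimate (the \(TT^*\) form of \eqref{eq:homogeneous-strichartz}),
\[
\|u(t)-\e^{-\ii tH_\alpha}u_\pm\|_{\ell^2}\le C\,\bigl\||u|^{p_\alpha-1}u\bigr\|_{L^{Q_\alpha'}((t,\pm\infty);\ell^{Q_\alpha'})},
\]
which tends to \(0\) as \(t\to\pm\infty\) by dominated convergence, since the full-line norm is finite.

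For the uniformity in \(\alpha\ge0\), note that \(\sigma_\alpha=1\), \(p_\alpha=3\) and \(Q_\alpha=4\) for all such \(\alpha\). By \Cref{cor:strichartz} the constants are independent of \(\alpha\). The smallness conditions above depend only on \(C\) and \(|\mu|\le\mu_0\), which yields \(\varepsilon(\mu_0)\).

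The main obstacle is mild. It is confirming that the Keel--Tao inhomogeneous estimate applies with the same non-endpoint pair on both sides, and that the \(\ell^2\)-continuity and uniqueness statements in the full space follow from it. The algebra for the exponents is the only place where the specific power \(p_\alpha\) enters.
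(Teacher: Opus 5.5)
Your proposal is correct and follows the paper's proof step for step: admissibility of the diagonal pair $(Q_\alpha,Q_\alpha)$, the exact H\"older identity from $Q_\alpha=p_\alpha+1$, a Strichartz contraction for the Duhamel map, unconditional uniqueness by splitting time into intervals of small $L^{Q_\alpha}\ell^{Q_\alpha}$ norm, scattering via the dual homogeneous estimate, and $\alpha$-uniformity from $\sigma_\alpha=1$, $p_\alpha=3$, $Q_\alpha=4$ for $\alpha\ge0$. The one point the paper makes explicit and you skip is that for $t<0$ the term $\int_0^t$ is an advanced rather than a retarded integral, so \eqref{eq:inhomogeneous-strichartz} does not apply to it as stated. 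The paper handles this by contracting separately on each half-line in $L^\infty\ell^2\cap L^{Q_\alpha}\ell^{Q_\alpha}$ and using the advanced counterpart obtained by time reversal; in your full-line setting the same fix works, or you can write the advanced integral as the untruncated integral minus the retarded one.
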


\begin{proof}
Write \(\sigma=\sigma_\alpha\), \(p=p_\alpha\), and \(Q=Q_\alpha\).
Since \(0<\sigma\le1\),
\[
 Q=p+1=2+\frac{2}{\sigma}
 =\frac{2(1+\sigma)}{\sigma}\ge4,
 \qquad
 \frac1Q+\frac{\sigma}{Q}=\frac{\sigma}{2},
\]
the pair \((Q,Q)\) is \(\alpha\)-admissible and is never the forbidden
endpoint.  On either half-line
\(I=\R_+\) or \(I=\R_-\), use the space
\[
 X(I)=L^\infty(I;\ell^2)\cap L^Q(I;\ell^Q)
\]
and the Duhamel map
\[
 \mathcal Tu(t)=\e^{-\ii tH_\alpha}u_0
 -\ii\mu\int_0^t\e^{-\ii(t-s)H_\alpha}
 |u(s)|^{p-1}u(s)\,ds.
\]
Because \(Q=p+1\), H\"older's inequality on
\(I\times\N_0\) gives the exact exponent relation
\begin{equation}\label{eq:critical-nonlinearity-bound}
 \bigl\||u|^{p-1}u\bigr\|_{L^{Q'}(I;\ell^{Q'})}
 =\|u\|_{L^Q(I;\ell^Q)}^p.
\end{equation}
The homogeneous and inhomogeneous estimates in
\Cref{cor:strichartz}, used with the admissible targets
\((\infty,2)\) and \((Q,Q)\), therefore imply
\[
 \|\mathcal Tu\|_{X(I)}
 \le C_\alpha\|u_0\|_{\ell^2}
 +C_\alpha|\mu|\|u\|_{X(I)}^p.
\]
Here functions on \(I\) are extended by zero; on \(\R_-\) we use the
advanced counterpart of \eqref{eq:inhomogeneous-strichartz}, which
follows from the retarded estimate by taking adjoints, equivalently by
time reversal.
The pointwise Lipschitz estimate for
\(z\mapsto|z|^{p-1}z\), followed by H\"older, similarly gives
\[
 \|\mathcal Tu-\mathcal Tv\|_{X(I)}
 \le C_{\alpha,p}|\mu|
 \bigl(\|u\|_{X(I)}^{p-1}+\|v\|_{X(I)}^{p-1}\bigr)
 \|u-v\|_{X(I)}.
\]
For sufficiently small \(\|u_0\|_{\ell^2}\), the map is a
contraction on a ball in \(X(I)\).  Applying this on both half-lines
gives the global solution; the standard Duhamel argument also gives its
\(C_t\ell^2\) continuity.  To obtain uniqueness in the whole class
\eqref{eq:critical-scattering-space}, let \(u,v\) be two such mild
solutions with the same initial value.  Absolute continuity of their
\(L^Q_t\ell^Q\) norms partitions each half-line into finitely many
successive intervals on which
\(\|u\|_{L^Q\ell^Q}^{p-1}+\|v\|_{L^Q\ell^Q}^{p-1}\) is smaller than
the reciprocal of the constant in the difference estimate.  Starting
at the origin and iterating that estimate on these intervals gives
\(u=v\).

By the dual homogeneous Strichartz estimate and
\eqref{eq:critical-nonlinearity-bound}, the integrals
\[
 u_\pm=u_0-\ii\mu\int_0^{\pm\infty}
 \e^{\ii sH_\alpha}|u(s)|^{p-1}u(s)\,ds
\]
converge in \(\ell^2\).  Applying the same estimate on the tails of
the time integral proves \eqref{eq:critical-scattering}.  Finally, for
\(\alpha\ge0\) one has \(\sigma=1\), \(p=3\), \(Q=4\); the relevant
Strichartz constants are uniform in \(\alpha\), and the cubic
Nemytskii constant is fixed.  Hence a single threshold
\(\varepsilon(\mu_0)>0\) works uniformly for all \(\alpha\ge0\) and
\(|\mu|\le\mu_0\).
\end{proof}

\section{Discussion and open problems}

The KKT theorem and the negative-parameter contractivity theorem
together determine the optimal decay exponent for every
\(\alpha>-1\).  They are independent of the companion
EMN--Krasikov theorem, which enters only in the degree-sensitive bulk
branch of \Cref{cor:endpoint-bulk} and \Cref{thm:two-scale-kernel}.

The reusable part of the proof is the passage from an exact singular
moment to a first-lobe bound for a real-zero canonical product.  Its
constants yield Laguerre and countable-product Bessel consequences.
It would be useful to identify other
canonical products for which the corresponding moment is explicitly
computable.

The remaining positive-parameter questions concern exact extrema rather
than decay exponents.  Foremost is the lowest-row dominance
conjecture~\ref{conj:lowest-row}, equivalently an extremal problem for
normalized Meixner coefficients.  A necessary confluent consequence of
lowest-row dominance is the separate extremal question whether
\[
\sup_{n\ge0,\,u\ge0}|\mathcal L_n^{(\alpha)}(u)|
=\left(
\frac{\alpha^\alpha\e^{-\alpha}}{\G(\alpha+1)}
\right)^{1/2},
\qquad \alpha>0,
\]
with the supremum attained at \(n=0,u=\alpha\).  A separate
finite-parameter problem is to determine the exact value of
\(\max|g_n^{(\alpha,\beta)}|\) and decide which of the two extreme
lobes wins.  \Cref{thm:intro-extreme-lobes} proves that no interior
lobe can maximize, but it does not compare the two extreme peaks in
closed form.  Likewise, \Cref{thm:main} asserts uniform sharpness of
the constant one, not optimality of \(T_{n,\alpha,\beta}\) for every
positive parameter triple.

\subsection*{Declaration of competing interest}

The author declares no competing interests.

\subsection*{Data availability}

No data were used or generated in this study.

\subsection*{Declaration of generative AI and AI-assisted technologies
in the manuscript preparation process}

During the preparation of this work, the author used OpenAI's ChatGPT
and Codex as research and editorial aids for checking algebraic identities and improving the
exposition.  The author independently verified the mathematical
arguments, reviewed and edited the manuscript, and takes full
responsibility for its content.

\end{document}